\newtheorem{theorem}{Theorem}[section]
\newtheorem{lemma}[theorem]{Lemma}
\newtheorem{Lemma}[theorem]{Lemma}
\newtheorem{cor}[theorem]{Corollary}
\theoremstyle{definition}
\newtheorem{definition}[theorem]{Definition}
\newtheorem{constr}[theorem]{Construction}
\theoremstyle{remark}
\numberwithin{equation}{section}
\newcommand{\Q}{\mathbb{Q}}
\newcommand{\C}{\mathbb{C}}
\newcommand{\N}{\mathbb{N}}
\newcommand{\R}{\mathbb{R}}
\DeclareMathOperator{\image}{im}
\DeclareMathOperator{\kernel}{ker}
\DeclareMathOperator{\diag}{diag}
\DeclareMathOperator{\codim}{codim}
\title{Circle actions and scalar curvature}
\author{Michael Wiemeler}
\address{Institut f\"ur Mathematik\\Universit\"at Augsburg\\D-86135 Augsburg\\ Germany}
\email{michael.wiemeler@math.uni-augsburg.de}
\thanks{}
\subjclass[2010]{53C20, 57S15}
\keywords{metrics of positive scalar curvature, \(S^1\)-actions}
\begin{document}
\begin{abstract}
  We construct metrics of positive scalar curvature on manifolds with circle actions.
  One of our main results is that there exist \(S^1\)-invariant metrics of positive scalar curvature on every \(S^1\)-manifold which has a fixed point component of codimension \(2\).
  As a consequence we can prove that there are non-invariant metrics of positive scalar curvature on many manifolds with circle actions.
  Results from equivariant bordism allow us to show that there is an invariant metric of positive scalar curvature on the connected sum of two copies of a simply connected semi-free \(S^1\)-manifold \(M\) of dimension at least six provided that \(M\) is not \(\text{spin}\) or that \(M\) is \(\text{spin}\) and the \(S^1\)-action is of odd type.
  If \(M\) is spin and the \(S^1\)-action of even type then there is a \(k>0\) such that the equivariant connected sum of \(2^k\) copies of \(M\) admits an invariant metric of positive scalar curvature if and only if a generalized \(\hat{A}\)-genus of \(M/S^1\) vanishes.
\end{abstract}

\maketitle


\section{Introduction}

In this article we discuss the following questions: Let \(G\) be a compact connected Lie-group and \(M\) a closed connected effective \(G\)-manifold.
\begin{enumerate}
\item Is there a \(G\)-invariant metric of positive scalar curvature on \(M\)?
\item If the answer to the first question is ``no'', does there exist a non-invariant metric of positive scalar curvature on \(M\)? 
\end{enumerate}

It has been shown by Lawson and Yau \cite{MR0358841} that the answer to the first question is ``yes'' if \(G\) is non-abelian.
Therefore we concentrate on the case where \(G\) is abelian and especially on the case \(G=S^1\).

In this case there are two extreme situations:
\begin{enumerate}
\item The \(S^1\)-action on \(M\) is free.
\item There are ``many'' \(S^1\)-fixed points, i.e. the fixed point set has low codimension.
\end{enumerate}

The first situation was studied by B\'erard Bergery \cite{berard83:_scalar}, who showed that a free \(S^1\)-manifold \(M\) admits an \(S^1\)-invariant metric of positive scalar curvature if and only if \(M/S^1\) admits a metric of positive scalar curvature.

For the second case we have the following theorem.

\begin{theorem}[Theorem \ref{sec:constr-invar-psc-1}]
\label{sec:introduction}
  Let \(G\) be a compact Lie-group. Assume that there is a circle
  subgroup \(S^1\subset Z(G)\) contained in the center of \(G\).
  Moreover, let \(M\) be a closed connected effective \(G\)-manifold such that there is a component \(F\) of \(M^{S^1}\) with \(\codim F=2\).
  Then there is an \(G\)-invariant metric of positive scalar curvature on \(M\).
\end{theorem}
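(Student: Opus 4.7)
The plan is to construct the desired $G$-invariant positive scalar curvature metric via a warped-product modification near $F$, glued to any fixed background metric elsewhere, with a conformal adjustment at the end to ensure global positivity.

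Because $S^{1}\subset Z(G)$, the component $F$ is $G$-invariant (by connectedness of $G$), and the $G$-action on each fiber $\nu_{x}F\cong\R^{2}$ commutes with the $S^{1}$-rotation and is therefore $\C$-linear, making $\nu F$ a $G$-equivariant complex line bundle over $F$. I would fix a $G$-invariant Hermitian metric, a $G$-invariant principal connection with connection $1$-form $\alpha$ on the associated unit circle bundle, and a $G$-invariant Riemannian metric $g_{F}$ on $F$. By the equivariant tubular neighborhood theorem a $G$-invariant open neighborhood $U\subset M$ of $F$ is $G$-diffeomorphic to a disk subbundle of $\nu F$, with bundle projection $\pi$.

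On $U$ I would equip $M$ with the $G$-invariant warped-product metric
$$g_{\epsilon}=\pi^{*}g_{F}+dr^{2}+f_{\epsilon}(r)^{2}(d\theta+\alpha)^{2},$$
where $(r,\theta)$ are fiber polar coordinates and $f_{\epsilon}$ is smooth with $f_{\epsilon}(0)=0$, $f_{\epsilon}'(0)=1$, equal to $\epsilon\sin(r/\epsilon)$ near $r=0$, concave on a transition annulus, and equal to the constant $\epsilon$ for $r\in[r_{0},R]$. The standard scalar curvature formula for such warped submersion metrics gives a dominant term $-2f_{\epsilon}''/f_{\epsilon}=2/\epsilon^{2}$ on the hemispherical region, which overwhelms the bounded contribution from $\pi^{*}\operatorname{scal}(g_{F})$ and the $O(\epsilon^{2})$ correction from $|d\alpha|^{2}$. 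Then I would extend $g_{\epsilon}$ to all of $M$ by gluing to any fixed $G$-invariant background metric on the complement $W=M\setminus\interior(U)$, interpolating on a $G$-invariant collar of $\partial U$, to yield a smooth $G$-invariant global metric.

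The main obstacle is that this global metric may fail to have positive scalar curvature on $W$, and I would resolve this by a conformal change. Since the scalar curvature of $g_{\epsilon}$ has a positive spike of order $\epsilon^{-2}$ on a region near $F$ of volume of order $\epsilon^{2}\cdot\operatorname{vol}(F)$, a test function supported near $F$ shows that the Rayleigh quotient of the conformal Laplacian $L_{g_{\epsilon}}=-c_{n}\Delta_{g_{\epsilon}}+\operatorname{scal}(g_{\epsilon})$ is positive for all $\epsilon$ sufficiently small. Hence $\lambda_{1}(L_{g_{\epsilon}})>0$; the corresponding positive principal eigenfunction $u$ is unique up to scale and is therefore $G$-invariant (since $G$ acts by isometries), and the Yamabe-type conformal change $u^{4/(n-2)}g_{\epsilon}$ produces a $G$-invariant metric of positive scalar curvature in the same conformal class. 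The delicate part of the argument is the quantitative Rayleigh-quotient estimate: one must balance the $\epsilon^{-2}$ spike against the gradient energy of a concentrated bump function, using the explicit warped-product form of $g_{\epsilon}$ near $F$ to carry out the computation uniformly in the choice of the background metric on $W$.
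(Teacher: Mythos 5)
Your warped-product model near $F$ is fine, but the final step --- deducing $\lambda_{1}(L_{g_{\epsilon}})>0$ from the curvature spike and then rescaling conformally --- is where the argument breaks, and it is exactly where the content of the theorem lies. First, the eigenvalue logic is reversed: exhibiting one test function supported near $F$ with positive Rayleigh quotient only gives an \emph{upper} bound on $\lambda_{1}$; to conclude $\lambda_{1}>0$ you must control the quotient for \emph{all} test functions, and a test function concentrated in a region of $W$ where the background metric has very negative scalar curvature gives a negative quotient no matter how large the $\epsilon^{-2}$ spike near $F$ is, since such a test function does not see the spike at all. Second, and more fundamentally, positivity of $\lambda_{1}$ of the conformal Laplacian is equivalent to the existence of a positive scalar curvature metric in the conformal class, so it is a global quantity carrying the full obstruction; it cannot be manufactured by a local modification near $F$ glued to an arbitrary invariant background on the complement. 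Note that your argument never uses the circle action away from $F$: it would apply verbatim to any closed manifold containing a codimension-two submanifold whose normal bundle is a complex line bundle, for instance $T^{n-2}\times\{pt\}\subset T^{n}$, and would then ``prove'' that the torus admits positive scalar curvature, which is false. So the gap is not a missing estimate that could be sharpened; the conformal strategy itself cannot work.

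By contrast, the paper's proof is genuinely global and uses the action on all of $M$: it removes an invariant tubular neighborhood of $F$ to get $Z$ with $\partial Z=SF$, chooses a $G$-handle decomposition of $Z$ with no handles of codimension $0$ (Lemma~\ref{sec:constr-invar-psc}), crosses with $D^{2}$ so that all handles of $X=Z\times D^{2}$ have codimension at least three, applies the equivariant Gromov--Lawson surgery theorem (Theorem~\ref{sec:pre}) to obtain a $G\times S^{1}_{1}$-invariant metric of positive scalar curvature on $\partial X$, and then identifies $M$ with the quotient of $\partial X$ by a free diagonal circle action, descending the metric via B\'erard Bergery's submersion theorem (Theorem~\ref{sec:pre-1}). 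Your local warped-product picture near $F$ resembles what one sees at the end of that construction, but the passage to a global invariant metric requires the handle/surgery machinery rather than a conformal change.
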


A torus manifold \(M\) is a closed connected \(2n\)-dimensional manifold with an effective action of an \(n\)-dimensional torus \(T\), such that \(M^T\neq \emptyset\).
Smooth compact toric varieties are examples of torus manifolds.
As a corollary to Theorem~\ref{sec:introduction} we prove:

\begin{cor}[Corollary \ref{sec:constr-invar-metr-1}]
  Every torus manifold admits an invariant metric of positive scalar curvature.
\end{cor}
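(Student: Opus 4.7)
The plan is to reduce the corollary directly to Theorem~\ref{sec:introduction} by exhibiting a circle subgroup of $T$ whose fixed point set has a codimension-$2$ component. Since $T$ is abelian, any such circle automatically lies in the center, so no further hypothesis is needed.

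First I would pick a point $p \in M^T$, which exists by the definition of a torus manifold. The isotropy representation $T \to GL(T_pM)$ is faithful: a subtorus acting trivially on $T_pM$ would act trivially on a neighborhood of $p$ and hence, by connectedness of $M$, on all of $M$, contradicting effectiveness. Decomposing $T_pM$ into real $2$-dimensional weight spaces gives exactly $n$ weights $\alpha_1,\dots,\alpha_n \in \mathrm{Hom}(T,S^1)$. Faithfulness of the isotropy representation means these weights generate the weight lattice $\mathrm{Hom}(T,S^1) \cong \mathbb{Z}^n$ as an abelian group; since there are exactly $n$ of them and they generate a rank-$n$ lattice, they must in fact form a $\mathbb{Z}$-basis.

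Next I would let $v_1,\dots,v_n \in \mathrm{Hom}(S^1,T)$ be the basis dual to $\alpha_1,\dots,\alpha_n$ under the pairing $\langle \alpha_j, v_i\rangle = \delta_{ij}$, and consider the circle subgroup $S^1_i := v_i(S^1) \subset T$. The action of $S^1_i$ on the $j$-th weight space has weight $\alpha_j \circ v_i = \delta_{ij}$, so $S^1_i$ acts trivially on every weight space except the $i$-th. Hence the tangent space at $p$ to the component $F$ of $M^{S^1_i}$ containing $p$ is the sum of the $(n-1)$ weight spaces with $j\neq i$, giving $\dim F = 2n-2$ and $\operatorname{codim} F = 2$.

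Finally, I would invoke Theorem~\ref{sec:introduction} with $G = T$ and this circle $S^1_i \subset T = Z(T)$, which yields a $T$-invariant metric of positive scalar curvature on $M$. The only nontrivial step is the structural observation that the isotropy weights at a $T$-fixed point of a torus manifold form a lattice basis; everything else is a direct invocation of the main theorem, so I do not expect a serious obstacle beyond making this basis statement precise.
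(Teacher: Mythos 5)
Your proposal is correct and takes essentially the same route as the paper, which deduces the corollary from Corollary~\ref{sec:constr-invar-psc-2} (the isotropy representation at a $T$-fixed point is standard up to an automorphism of $T$, so some circle subgroup has a codimension-two fixed point component) and then applies Theorem~\ref{sec:constr-invar-psc-1}; your argument just spells out the lattice-basis fact that the paper asserts. One small refinement: in your faithfulness step you should rule out \emph{any} nontrivial element of $T$ (not only a subtorus) acting trivially on $T_pM$ -- the same linearization/open-closed argument does this -- since otherwise the weights are only guaranteed to span a finite-index sublattice rather than form a $\mathbb{Z}$-basis.
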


We also show that a closed connected semi-free \(S^1\)-manifold \(M\) of dimension greater than five without fixed point components of codimension less than four admits an invariant metric of positive scalar curvature if and only if the bordism class of \(M\) in a certain equivariant bordism group can be represented by a \(S^1\)-manifold with an invariant metric of positive scalar curvature (see Theorem~\ref{sec:semi-free-circle-6}).

If \(M\) is simply connected, one has to distinguish between the following three cases:
\begin{enumerate}
\item \(M\) does not admit a Spin-structure.
\item \(M\) admits a Spin-structure and the \(S^1\)-action lifts into this structure. In this case it is said that the action is of \emph{even type}.
\item \(M\) admits a Spin-structure, but the \(S^1\)-action does not lift into it. In this case it is said that the action is of \emph{odd type}.
\end{enumerate}

An investigation of the relevant bordism groups in these cases leads to the following theorems.

\begin{theorem}[{Theorem \ref{sec:semi-free-circle-1}}]
\label{sec:introduction-2}
  Let \(M\) be a closed simply connected semi-free \(S^1\)-manifold of dimension \(n>5\).
  If \(M\) is not spin or spin and the \(S^1\)-action is odd, then
  the equivariant connected sum of two copies of \(M\) admits an invariant metric of positive scalar curvature.
\end{theorem}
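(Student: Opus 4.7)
The plan is to combine Theorem~\ref{sec:introduction} (the codimension-two criterion) with the bordism characterisation of invariant psc metrics on semi-free \(S^1\)-manifolds (Theorem~\ref{sec:semi-free-circle-6}), exploiting the fact that the equivariant connected sum \(M \# M\) represents \(2[M]\) in the relevant equivariant bordism group and that doubling kills the obstructions in the two cases under consideration.

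First I would reduce to the case where every \(S^1\)-fixed point component of \(M\) has codimension at least four. If some component has codimension two, then Theorem~\ref{sec:introduction} already provides an invariant psc metric on \(M\); the equivariant connected sum \(M \# M\) is then formed along a pair of principal orbits, which have codimension \(n-1 \geq 5\), and an equivariant Gromov--Lawson-type surgery argument extends the metric across. Hence from now on I assume that all fixed components of \(M\) have codimension at least four, so that the same holds for \(M \# M\).

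Next I would apply Theorem~\ref{sec:semi-free-circle-6} to \(N := M \# M\): an invariant psc metric exists on \(N\) if and only if the class \([N] = 2[M]\) in the appropriate semi-free equivariant bordism group (oriented if \(M\) is not spin; spin with odd action in the odd-type spin case) admits a representative carrying an invariant psc metric. My strategy is to produce such a representative that possesses a fixed point component of codimension two, whereupon Theorem~\ref{sec:introduction} completes the proof. The natural candidate is assembled from the equivariant normal-bundle data of the fixed locus of \(M\), taken twice and suitably extended by a trivial real line bundle summand so as to introduce a codimension-two fixed stratum in the total space; the bordism from this model to \(2[M]\) is then built by a double-mapping-cylinder construction on the fixed-point data.

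The main obstacle is the bordism-theoretic step: one must control the equivariant semi-free bordism group of simply connected manifolds without codimension-two fixed components well enough to show that twice every class lies in the subgroup represented by manifolds carrying an invariant psc metric. In the oriented non-spin case no \(\hat{A}\)-type obstruction can arise, so a fairly direct geometric doubling argument should suffice. In the odd-type spin case, the fact that the \(S^1\)-action does not lift to the spin structure means that the Atiyah--Hirzebruch rigidity theorem for the equivariant \(\hat{A}\)-genus (which is precisely what obstructs psc in the even-type case and forces the factor of \(2^k\) appearing in the abstract) is not available to obstruct the construction, so the same doubling strategy succeeds. Carrying out this bordism identification in both settings will be the bulk of the technical work.
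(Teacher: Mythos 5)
Your reduction steps agree with the paper: if some fixed component has codimension two, Theorem~\ref{sec:constr-invar-psc-1} applies (indeed \(M\#M\) still has such a component, so no surgery extension is even needed), and otherwise one passes to the bordism criterion and must show that \(2[M]\) is represented by a semi-free \(S^1\)-manifold, without codimension-two fixed components, carrying an invariant psc metric. But the proposal stops exactly where the actual work begins, and what you sketch in its place does not function as a proof. Your ``natural candidate'' --- doubling the equivariant normal-bundle data of \(M^{S^1}\) and ``extending by a trivial real line bundle summand so as to introduce a codimension-two fixed stratum,'' with a ``double-mapping-cylinder'' bordism to \(2[M]\) --- is not a construction that produces a closed semi-free \(S^1\)-manifold in the class \(2[M]\): prescribed fixed-point data is realizable only when its image under the map \(\mu\colon F_*\to\Omega^{SO}_{*-2}(BU(1))\) vanishes, and adding a trivial summand changes the fixed-point data (it multiplies by \(X_0\)), so it does not represent the same bordism class; moreover introducing a codimension-two stratum would take you out of the bordism group \(\Omega^{SF}_{n,\geq 4}\) in which Theorem~\ref{sec:semi-free-circle-6} is stated, which is why the paper instead works with \(\Omega^{SO,SF}_n\) via Theorem~\ref{sec:semi-free-circle-2} and must prove separately (in its proof) that \(\phi\) only depends on the image there.

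The core of the paper's argument, which is entirely missing from your proposal, is a concrete computation in \(\Omega_*^{SO,SF}\) (note: this oriented group is the relevant one in \emph{both} of your cases, including spin with odd action, since then \((M-M^{S^1})/S^1\) is not spin and the \(\gamma(M/S^1)\)-structure is just an orientation): the exact sequence \(0\to\Omega_*^{SO,SF}\to F_*\to\Omega^{SO}_{*-2}(BU(1))\to 0\) with \(F_*=\Omega_*^{SO}[X_i]\); Lemma~\ref{sec:semi-free-circle-4} (dividing fixed-point data by \(X_0\)); Lemma~\ref{sec:semi-free-circle-3} (the involution \(\iota\) acts by \((-1)^{\dim M/2}\), which is precisely what makes \emph{doubling} work); the psc-carrying models \(\Gamma(\gamma,M)\) and \(\Delta(\gamma,M)\) of Construction~\ref{sec:non-spin-case-1}; and a case analysis modulo \(4\), where \(n\equiv 1,3\) gives \(2[M]=0\) for torsion reasons, \(n\equiv 0\) uses \(2\lambda(M)=\lambda(\Delta(X_0,\tilde L))\), and \(n\equiv 2\) requires the Buchstaber--Ray toric generators of \(\Omega_*^{SO}/\mathrm{torsion}\) with semi-free actions and the indecomposability of \(\C P^{(n-2)/2}\) to conclude \(2[M]=0\). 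Your substitute reasoning --- ``no \(\hat A\)-type obstruction can arise, so a fairly direct geometric doubling argument should suffice,'' and in the odd spin case ``the rigidity theorem is not available, so the same doubling strategy succeeds'' --- is a non sequitur: the absence of a known obstruction does not produce a psc representative of \(2[M]\), and identifying why a factor of \(2\) (rather than \(2^k\) or \(1\)) suffices is exactly the content of the lemmas above. As it stands the proposal has a genuine gap at its central step.
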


In \cite{MR1758446} Lott constructed a generalized \(\hat{A}\)-genus for orbit spaces of semi-free even \(S^1\)-actions on \(\text{Spin}\)-manifolds.
He showed that for such a manifold \(M\), \(\hat{A}(M/S^1)\) vanishes if \(M\) admits an invariant metric of positive scalar curvature.
We prove the following partial converse to his result.

\begin{theorem}[Theorem \ref{sec:spin-case-1}]
\label{sec:introduction-3}
  Let \(M\) be a closed simply connected Spin-manifold of dimension \(n>5\)  with even semi-free \(S^1\)-action.
  Then we have \(\hat{A}(M/S^1)=0\) if and only if
  there is a \(k\in \N\) such that
   the equivariant connected sum of \(2^k\) copies of \(M\) admits an invariant metric of positive scalar curvature.
\end{theorem}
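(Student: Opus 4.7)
The plan combines Lott's vanishing theorem for the forward direction with the equivariant bordism reduction of Theorem~\ref{sec:semi-free-circle-6} for the reverse direction, together with a bordism-theoretic computation showing that Lott's generalized $\hat{A}$-genus is the only obstruction to invariant positive scalar curvature modulo $2$-torsion.

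For the ``only if'' direction, suppose $\#^{2^k} M$ admits an $S^1$-invariant metric of positive scalar curvature. Lott's theorem gives $\hat{A}((\#^{2^k} M)/S^1) = 0$. Since equivariant connected sum is additive in the equivariant spin bordism group and Lott's invariant descends to a bordism invariant of orbit spaces of even-type semi-free $S^1$-manifolds, we obtain $\hat{A}((\#^{2^k} M)/S^1) = 2^k \hat{A}(M/S^1)$, whence $\hat{A}(M/S^1) = 0$.

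For the ``if'' direction, let $\Omega_n$ denote the bordism group of closed even-type semi-free spin $S^1$-manifolds of dimension $n$ with no fixed point components of codimension less than $4$, and let $P_n\subset\Omega_n$ be the subgroup of classes representable by a manifold admitting an invariant metric of positive scalar curvature. By Theorem~\ref{sec:semi-free-circle-6}, $M$ admits such a metric exactly when $[M]\in P_n$, and additivity of equivariant connected sum in bordism yields $[\#^{2^k} M] = 2^k [M]$. Thus it suffices to show that $\hat{A}(M/S^1)=0$ forces $2^k[M]\in P_n$ for some $k\in\N$. By Lott's theorem, $\hat{A}$ descends to a homomorphism $\hat{A}\colon\Omega_n/P_n\to\R$, and the central claim is that its kernel is $2$-torsion.

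To establish the kernel claim I would decompose $\Omega_n$ using fixed-point data, so that a class is captured by the spin bordism class of the fixed-point components together with their equivariant complex normal bundles and the relative spin bordism class of the free part. On the free part one invokes Stolz's theorem that spin bordism modulo PSC-representable classes is detected by the $\alpha$-invariant up to $2$-torsion; on the fixed-point side one produces explicit invariant PSC representatives from linear $S^1$-actions on spheres and from the codimension-$2$ construction of Theorem~\ref{sec:introduction}. Identifying Lott's $\hat{A}(M/S^1)$ with the equivariant index that arises from this decomposition then forces any element in the kernel of $\hat{A}$ on $\Omega_n/P_n$ to be annihilated by a power of $2$, which finishes the proof. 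The main obstacle is precisely this last identification, together with the verification that every generator of $\Omega_n$ not detected by $\hat{A}$ becomes PSC-representable after sufficiently many doublings, requiring a careful combination of Stolz's non-equivariant result with the equivariant PSC constructions developed in the earlier sections of the paper.
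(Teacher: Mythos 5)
Your overall architecture is the right one (forward direction via Lott plus additivity of \(\hat{A}\) under equivariant connected sum; reverse direction by reducing, via Theorem~\ref{sec:semi-free-circle-6}/Theorem~\ref{sec:semi-free-circle-2}, to showing that \(2^k[M]\) becomes representable by an invariant-PSC manifold), and the forward direction as you state it is fine. But the ``central claim'' -- that \(\hat{A}\) descends to \(\Omega_n/P_n\) with \(2\)-primary kernel -- is not a lemma you can quote or dispatch by the decomposition you sketch; it \emph{is} the theorem, and your proposed means of proving it would not work. In particular, you cannot produce the needed PSC representatives ``from linear \(S^1\)-actions on spheres and from the codimension-\(2\) construction of Theorem~\ref{sec:constr-invar-psc-1}'': a spin manifold with an even semi-free action has all fixed components of codimension divisible by \(4\), so the codimension-\(2\) theorem is never applicable inside the bordism class \(\Omega_*^{\text{Spin},\text{even},SF}\) you must work in, and linear sphere actions carry essentially no fixed-point data. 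What is actually required is a computation of the fixed-point-data ring \(F_*^{\text{even}}\otimes\mathbb{Z}[\tfrac12]\) (with generators \(X_0,X_1\) and the classes \(\lambda(M_k)\), including \(\mathbb{H}P^2\) with its relation involving the Kummer surface), together with the explicit bundle constructions \(\Gamma(\gamma,M)\) and \(\Delta(\gamma,M)\) of Construction~\ref{sec:non-spin-case-1}, whose total spaces carry invariant connection metrics of positive scalar curvature via Theorem~\ref{sec:pre-1}, the spin criterion of Lemma~\ref{sec:spin-case-2} to check that these stay in the even spin class, and a case-by-case elimination (as in Lemma~\ref{sec:semi-free-circle-4} and the non-spin proof) of each monomial \(X_0^kX_1^l\lambda(\prod_i M_i)\) up to powers of \(2\).

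Your final step is also harder than it needs to be and is left unresolved: you propose to identify Lott's \(\hat{A}(M/S^1)\) with ``the equivariant index arising from the decomposition,'' and you concede this identification is the main obstacle. The paper's route avoids any such identification: once the fixed-point data of \(2^k M\) is matched by an explicit PSC-admitting representative, the difference is bordant (in \(\Omega_*^{\text{Spin},\text{even},SF}\)) to a \emph{free} \(S^1\)-manifold \(N\), one performs surgery to make \(N/S^1\) simply connected, and then one only uses two soft properties of Lott's genus -- equivariant bordism invariance and agreement with the ordinary \(\hat{A}\)-genus in the free case -- so that Stolz's theorem for the simply connected spin quotient \(N/S^1\) together with Theorem~\ref{sec:pre-1} finishes the argument (an extra factor of \(2\) absorbing the \(KO\)-torsion part of the \(\alpha\)-invariant). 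As written, your proposal names the correct target but leaves the entire quantitative bordism computation, and the construction of the equivariant PSC generators, as an unproved assertion, so it does not constitute a proof.
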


By using Theorem~\ref{sec:introduction}, we can show that there are non-invariant metrics of positive scalar curvature on many manifolds with \(S^1\)-action.
This is the content of the next theorem.

\begin{theorem}[Theorem \ref{sec:constr-metr-posit-1}]
\label{sec:introduction-1}
  Let \(M\) be a closed connected effective \(S^1\)-manifold of dimension \(n\geq 5\) such that the principal orbits in \(M\) are null-homotopic. 
This condition guarantees that the \(S^1\)-action on \(M\) lifts to an \(S^1\)-action on the universal cover  \(\tilde{M}\) of \(M\).
    If the universal cover of \(M\) is a Spin-manifold assume that the lifted \(S^1\)-action on \(\tilde{M}\) is of odd type.

    Then \(M\) admits a non-invariant metric of positive scalar curvature.
\end{theorem}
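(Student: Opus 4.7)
My plan is to deduce the statement from Theorem~\ref{sec:introduction} by performing an equivariant surgery on $M$ along a null-homotopic principal orbit to produce a new $S^{1}$-manifold with a codimension-two fixed component, and then transporting the resulting invariant positive scalar curvature metric back to $M$ via a second, non-equivariant surgery of codimension at least three.

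First I would pick a principal orbit $O\subset M$. Because $O$ is null-homotopic by hypothesis, it bounds an embedded disk and has an equivariant tubular neighborhood of product form $S^{1}\times D^{n-1}$, with $S^{1}$ acting by left translation on the first factor. I would then replace this tube by $D^{2}\times S^{n-2}$ with $S^{1}$ acting by standard rotation on $D^{2}$, glued along the common boundary $S^{1}\times S^{n-2}$. The resulting closed $S^{1}$-manifold $M'$ has $\{0\}\times S^{n-2}$ as a codimension-two component of its fixed-point set, so Theorem~\ref{sec:introduction} provides an $S^{1}$-invariant metric $g'$ on $M'$ of positive scalar curvature.

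To recover a psc metric on $M$ itself, I would use that the whole modification takes place inside a ball surrounding the bounding disk of $O$, so that the underlying manifold $M'$ is diffeomorphic to a connected sum $M\#N$, where $N$ is an $S^{n-2}$-bundle over $S^{2}$. Inside $N$ there is an embedded two-sphere (the base, or $S^{2}\times\{*\}$ in the trivial case) of codimension $n-2\ge 3$ in $M'$; surgery on it converts $M\#N$ into $M\#S^{n}\cong M$. Since this second surgery has codimension at least three, the Gromov--Lawson surgery theorem applied to $g'$ yields a (generally no longer $S^{1}$-invariant) metric of positive scalar curvature on $M$.

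The step I expect to be most delicate is ensuring that the first, equivariant, surgery actually produces a codimension-two fixed component, which amounts to a framing/compatibility question. The slice representation at the new fixed set has $S^{1}$-weight one, and for this to be consistent with the ambient $S^{1}$-geometry it must respect any Spin structure present. This is where the hypotheses are used: the null-homotopy of principal orbits allows one to lift the $S^{1}$-action to $\tilde M$ and analyze the framing there, and the assumption that the lifted action is of odd type whenever $\tilde M$ is Spin is precisely what kills the obstruction to performing the equivariant surgery with a weight-one normal representation (in the non-Spin case no such obstruction is present). Once this framing compatibility is verified, the two-step surgery argument carries through and produces the desired non-invariant psc metric on $M$.
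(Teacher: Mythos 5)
Your overall strategy is the same as the paper's (equivariant surgery on a principal orbit, Theorem~\ref{sec:constr-invar-psc-1} applied to the result, then a non-equivariant surgery of codimension \(n-2\geq 3\) to return to \(M\)), but there is a genuine gap at the middle step, and you have misplaced where the hypotheses enter. There is no obstruction to the first, equivariant, surgery: the equivariant normal bundle of a principal (hence free) orbit is always equivariantly trivial, the surgery can always be performed, and it always creates a codimension-two fixed component \(\{0\}\times S^{n-2}\); no spin or odd-type assumption is needed for that. What is not automatic is the diffeomorphism type of the surgered manifold \(M'\): since the circle is null-homotopic it lies in a ball, so \(M'\cong M\#(S^2\times S^{n-2})\) or \(M'\cong M\# N\) with \(N\) the non-trivial \(S^{n-2}\)-bundle over \(S^2\), \emph{depending on the framing}, and the framing used in the equivariant surgery is the one dictated by the equivariant trivialization, not one you get to choose. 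Your de-surgery step silently assumes the trivial case: in \(M\#(S^2\times S^{n-2})\) surgery on \(S^2\times\{*\}\) (whose normal framing is unique, as \(\pi_2(SO(n-2))=0\)) does give back \(M\). But in \(M\#N\) the natural \(2\)-sphere (a section of \(N\to S^2\)) has non-trivial normal bundle, since \(w_2(N)\neq 0\) evaluates non-trivially on the generator of \(H_2(N)\); no surgery on it is possible, and no embedded framed \(2\)-sphere in the \(N\)-summand undoes the connected summand. Undoing the original surgery directly would mean surgery on the belt sphere \(\{0\}\times S^{n-2}\), which has codimension two and is not covered by Theorem~\ref{sec:pre}. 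So as written the argument can fail exactly when the equivariant framing is the ``wrong'' one.

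This is precisely where the hypotheses of the theorem are actually used, and your proposal does not supply the needed argument. In the paper, if the universal cover of \(M\) is not spin one shows that the two possible surgery results are diffeomorphic, by isotoping the circle around an embedded \(2\)-sphere with non-trivial normal bundle so that its framing changes; if the universal cover is spin and the lifted action is odd, the argument from the proof of Lemma~\ref{sec:constr-psc-metr} shows that the universal cover of \(M'\) is again spin, which rules out the summand \(N\) (whose presence would make the universal cover non-spin) and forces \(M'\cong M\#(S^2\times S^{n-2})\). Your final paragraph instead invokes the odd-type assumption to ``kill the obstruction to performing the equivariant surgery with a weight-one normal representation''; there is no such obstruction, and with the hypotheses deployed only there, the identification \(M'\cong M\#(S^2\times S^{n-2})\) — the step your second surgery depends on — is left unproved.
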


Note that if in the situation of the above theorem \(M^{S^1}\neq \emptyset\), then the principal orbits are always null-homotopic.
Since a \(S^1\)-manifold \(M\) with \(\chi(M)\neq 0\) has fixed points we get the following corollary.

\begin{cor}[Corollary \ref{sec:constr-non-invar-2}]
  Let \(M\) be a closed connected manifold of dimension \(n\geq 5\) with non-zero Euler-characteristic such that the universal cover of \(M\) is not spin.
  If \(M\) does not admit a metric of positive scalar curvature, then there is no non-trivial \(S^1\)-action on \(M\).
\end{cor}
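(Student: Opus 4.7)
The plan is to argue by contrapositive: assume $M$ carries a non-trivial $S^1$-action and deduce that $M$ admits a metric of positive scalar curvature, contradicting the hypothesis. The corollary is then essentially an assembly of previously established facts, with Theorem~\ref{sec:introduction-1} doing the real work.

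First I reduce to the case of an \emph{effective} $S^1$-action, since Theorem~\ref{sec:introduction-1} requires effectiveness. The ineffective kernel $K$ of the action homomorphism $S^1 \to \mathrm{Diff}(M)$ is a closed subgroup of $S^1$, and by non-triviality it is a proper closed subgroup, hence finite. Passing to the quotient, $S^1/K \cong S^1$ acts effectively and non-trivially on $M$ with the same orbit structure.

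Next I verify the remaining hypotheses of Theorem~\ref{sec:introduction-1}. The dimension condition $n \geq 5$ is part of the hypothesis of the corollary. Since $\chi(M) \neq 0$, the standard identity $\chi(M) = \chi(M^{S^1})$ (noted immediately before the corollary in the paper) gives $M^{S^1} \neq \emptyset$. By the remark following Theorem~\ref{sec:introduction-1}, the existence of a fixed point forces the principal orbits to be null-homotopic, which in particular ensures that the $S^1$-action lifts to the universal cover $\tilde{M}$. Because $\tilde{M}$ is not spin by hypothesis, the conditional assumption in Theorem~\ref{sec:introduction-1} concerning odd-type lifts is vacuous. Thus Theorem~\ref{sec:introduction-1} applies and produces a (non-invariant) metric of positive scalar curvature on $M$, contradicting the standing hypothesis.

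There is no serious obstacle in this argument; the substantive content is entirely contained in Theorem~\ref{sec:introduction-1}. The only minor point requiring care is the preliminary reduction from a non-trivial action to an effective one, which is harmless because $S^1$ modulo a finite subgroup is again $S^1$.
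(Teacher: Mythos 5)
Your argument is the same as the paper's: $\chi(M)\neq 0$ forces fixed points, hence null-homotopic principal orbits, and then Theorem~\ref{sec:constr-metr-posit-1} (with the spin/odd-type hypothesis vacuous since $\tilde{M}$ is not spin) yields a metric of positive scalar curvature, a contradiction. Your explicit reduction from a non-trivial to an effective action by quotienting out the finite ineffective kernel is a correct detail the paper leaves implicit, so the proposal is correct and essentially identical to the paper's proof.
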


It should be noted that the condition on the principal orbits in the above theorem cannot be omitted.
This can be seen by considering a torus \(T^n=\prod_{i=1}^n S^1\) on which \(S^1\) acts by multiplication on one of the factors.
Then \(T^n\) admits a Spin-structure for which the \(S^1\)-action is of odd type.
But \(T^n\) does not admit a metric of positive scalar curvature.

Bredon \cite{MR0221518}, Schultz \cite{MR0380853} and Joseph \cite{MR632190} constructed \(S^1\)-actions of even type on homotopy spheres not bounding Spin-manifolds.
It is known that these homotopy spheres do not admit metrics of positive scalar curvature. 
Therefore Theorem \ref{sec:introduction-1} is not true for \(S^1\)-actions of even type on Spin-manifolds.

This paper is organized as follows.
In Section \ref{sec:constr-invar-psc-3} we prove Theorem~\ref{sec:introduction} and give some applications.
Then in Section \ref{sec:constr-psc-metr-1} we prove Theorem~\ref{sec:introduction-1} and give more applications.
In Section~\ref{sec:semi-free} we discuss the existence of metrics of positive scalar curvature on semi-free \(S^1\)-manifolds without fixed point components of codimension two.

I would like to thank Bernhard Hanke for helpful discussions on the subject of this paper and a simplification of the proof of Theorem~\ref{sec:constr-metr-posit-1}.
Moreover, I would like to thank the Max Planck Institute for Mathematics in Bonn  for hospitality and financial support while I was working on this paper.
I also want to thank the anonymous referee for comments which helped to improve the paper.

\section{Construction of invariant metrics of positive scalar curvature}
\label{sec:constr-invar-psc-3}

In this section we construct invariant metrics of positive scalar curvature on \(S^1\)-manifolds \(M\) such that \(M^{S^1}\) has codimension two.

For the construction of our metrics of positive scalar curvature we use a surgery-principle which was first proven independently by Gromov and Lawson \cite{MR577131} and Schoen and Yau \cite{MR535700}.
Later it was noted by B\'erard Bergery \cite{berard83:_scalar} that these constructions also work in the equivariant setting.
This gives the following theorem.

\begin{theorem}[{\cite[Theorem 11.1]{berard83:_scalar}}]
\label{sec:pre}
  Let \(G\) be a compact Lie-group and \(M\) and \(N\) be \(G\)-manifolds.
  Assume that \(N\) admits an \(G\)-invariant metric of positive scalar curvature.
  If \(M\) is obtained from \(N\) by equivariant surgery of codimension at least three, then \(M\) admits an invariant metric of positive scalar curvature.
\end{theorem}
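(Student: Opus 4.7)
The plan is to verify that the Gromov--Lawson/Schoen--Yau surgery construction is natural with respect to the $G$-action, so that when applied to a $G$-invariant starting metric it produces a $G$-invariant metric on $M$. Recall that the non-equivariant argument begins with a tubular neighborhood $U \cong S^k \times D^{n-k}$ of the submanifold $S^k \subset N$ along which we surger, and deforms the metric inside $U$ so that near $\partial U$ it takes the cylindrical form $S^k \times [0,T] \times S^{n-k-1}_{\varepsilon}$ with a small $\varepsilon$-round sphere factor; the codimension-at-least-three hypothesis ($n-k-1 \geq 2$) is precisely what allows the positive scalar curvature of the small sphere factor to dominate the error terms introduced by the bending. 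One then glues in the complementary handle $D^{k+1} \times S^{n-k-1}$ with a matching PSC product metric.

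The equivariant adaptation rests on two standard tools. First, the equivariant tubular neighborhood theorem furnishes a $G$-invariant tubular neighborhood of the embedded submanifold $\Sigma \subset N$ along which we surger, and identifies it with a $G$-equivariant disk bundle on which $G$ acts fiberwise by orthogonal transformations. Second, since the given metric on $N$ is $G$-invariant, the geodesic distance to $\Sigma$ is a $G$-invariant function and the metric is a warped product in the normal direction whose profile depends only on the radial variable and on the orthogonal structure of the normal bundle. Consequently the Gromov--Lawson bending, which modifies only this radial profile via a chosen bending function, automatically yields a $G$-invariant metric: every ingredient (normal exponential map, radial distance, bending function, orthogonal structure) is $G$-invariant by construction. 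The scalar-curvature computation is fiberwise in the normal direction and is literally identical to the non-equivariant case.

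Finally, the complementary handle, which has the form of a $G$-equivariant vector bundle over an orbit (e.g.\ $G \times_H (D^{k+1} \times S^{n-k-1})$), carries an obvious $G$-invariant PSC metric, namely the product of a $G$-invariant metric on the base factor with the small round sphere metric in the fiber direction. It matches the bent metric on $N$ along the common boundary after adjusting collars. The main obstacle is purely organizational: one must check that the bending parameters and the collar identification can be chosen uniformly along the $G$-orbits so that $G$-invariance is preserved globally and the gluing is smooth. This is automatic once the construction is phrased intrinsically in terms of $G$-invariant data, and constitutes the content of B\'erard Bergery's Theorem 11.1; alternatively, one may simply observe that the non-equivariant construction is natural with respect to isometries, so applying it to a $G$-invariant metric yields a $G$-invariant result.
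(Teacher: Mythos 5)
The paper does not prove this statement at all—it is quoted directly as Theorem 11.1 of B\'erard Bergery, i.e.\ the equivariant adaptation of the Gromov--Lawson/Schoen--Yau surgery construction—and your proposal reconstructs exactly that standard argument (invariant tubular neighborhood, $G$-invariant radial bending, invariant product metric on the handle $G\times_H(D^{k+1}\times S^{n-k-1})$), so it is in essence the same approach as the cited source. The only imprecision is the claim that the invariant metric is a warped product near the surgery locus; this is not needed, since the bending uses only the $G$-invariant distance function and the naturality of the construction under isometries, as your final remark correctly observes.
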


Besides the construction of metrics of positive scalar curvature via surgery, we need the following result which tells us that there are such metrics on certain orbit spaces of free torus actions.

\begin{theorem}
\label{sec:pre-1}
  Let \(M\) be a manifold with a free action of a torus \(T\). Assume that there is an action of a compact Lie-group which commutes with the \(T\)-action on \(M\).
Then there is a \(G\)-invariant metric of positive scalar curvature on \(M/T\) if and only if there is a \(G\times T\)-invariant metric of positive scalar curvature on \(M\).
\end{theorem}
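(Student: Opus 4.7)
The plan is to induct on $\dim T$, reducing to the case $T = S^1$ which is B\'erard Bergery's theorem \cite{berard83:_scalar} in its equivariant form.

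For the base case $T = S^1$, the direction "PSC on the base implies $S^1$-invariant PSC on the total space" is constructive: given a $G$-invariant PSC metric $\bar g$ on $M/S^1$, I would choose a $G$-invariant principal connection 1-form $\theta$ on the $S^1$-bundle $M \to M/S^1$ by averaging (using compactness of $G$ and that it commutes with $S^1$), and set $g_t = \pi^*\bar g + t^2\,\theta\otimes\theta$. A standard O'Neill computation for a principal $S^1$-bundle yields $\text{scal}(g_t) = \pi^*\text{scal}(\bar g) - \tfrac{t^2}{4}\|F_\theta\|^2$, which is positive for $t$ sufficiently small on the compact manifold $M$. The converse requires descending a $G\times S^1$-invariant PSC metric $g$ on $M$ to a $G$-invariant PSC metric on $M/S^1$; following B\'erard Bergery, one first deforms $g$ equivariantly so that its $S^1$-orbits have constant length (making the fibers totally geodesic), after which the O'Neill formula gives $\text{scal}(\bar g) \geq \text{scal}(g) > 0$ for the quotient metric.

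For the inductive step, I would split $T = S^1 \times T'$ and observe that $M/S^1$ inherits a free $T'$-action commuting with the induced $G$-action, with $(M/S^1)/T' = M/T$. The compact Lie group $G\times T'$ acts on $M$ by isometries commuting with the chosen $S^1$, so the base case (applied to this enlarged isometry group) equates a $G\times T$-invariant PSC metric on $M$ with a $G\times T'$-invariant PSC metric on $M/S^1$. The inductive hypothesis, applied to the free $T'$-action on $M/S^1$ with commuting $G$-action, then equates the latter with a $G$-invariant PSC metric on $(M/S^1)/T' = M/T$, closing the induction.

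The main technical hurdle is the deformation step in the converse direction of the base case: adjusting a $G\times S^1$-invariant PSC metric so that its $S^1$-orbits have constant length, while preserving both positivity of the scalar curvature and $G$-equivariance. This is the substantive part of B\'erard Bergery's original proof; since his construction is natural with respect to isometries and $G$ is compact (so averaging is available), it goes through in the equivariant setting without essential changes. Everything else is formal bookkeeping along the induction.
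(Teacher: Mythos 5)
Your overall skeleton is fine and genuinely different from the paper: you reduce to \(T=S^1\) by induction on \(\dim T\) (the splitting \(T=S^1\times T'\), the freeness of the induced \(T'\)-action on \(M/S^1\), and the bookkeeping with the enlarged group \(G\times T'\) all check out), whereas the paper applies B\'erard Bergery's construction to the whole torus at once. Your forward direction (average a principal connection, form \(\pi^*\bar g+t^2\,\theta\otimes\theta\), shrink the fibre on the compact total space) is essentially the paper's construction via Vilms' connection metrics and is correct.

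The genuine gap is in the descent direction of the base case. You describe B\'erard Bergery's argument as: first deform the invariant PSC metric \(g\) equivariantly so that the \(S^1\)-orbits have constant length (hence totally geodesic fibres), then apply O'Neill to get \(\operatorname{scal}(\bar g)\ge\operatorname{scal}(g)>0\); and you defer the deformation step to him. But that is not his proof, and the deformation you need is exactly the hard point: there is no direct argument that an invariant PSC metric can be modified to one with constant orbit length while keeping the scalar curvature positive --- a priori this claim is essentially equivalent to the theorem itself (once the theorem is known, such a metric exists as a connection metric over a PSC metric on the quotient, which is circular here). What B\'erard Bergery actually does is leave the upstairs metric untouched, take the submersion metric \(g^*\) on the quotient, and rescale it conformally by \(F(x)=f(x)^{2/(\dim M/T-1)}\), where \(f(x)\) is the volume of the orbit over \(x\); the conformal factor absorbs the terms coming from the non-constancy of \(f\). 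With that construction your equivariance remark becomes correct and easy --- if \(g\) is \(G\times S^1\)-invariant then \(f\), hence \(F\), and \(g^*\) are \(G\)-invariant, so the rescaled metric is \(G\)-invariant --- which is precisely the paper's proof. So replace the ``deform to constant orbit length'' step by the conformal rescaling (or supply an actual proof of the deformation claim, which you have not given); the rest of your induction then closes.
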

\begin{proof}
  In the case that \(G\) is the trivial group this theorem is part of B\'erard Bergery's Theorem C from \cite{berard83:_scalar}.
  So we begin by recalling B\'erard Bergery's construction and then indicate what has to be done to get a \(G\)-invariant metric.

  B\'erard Bergery starts with a \(T\)-invariant metric \(g\) of positive scalar curvature on \(M\).
  This metric induces a metric \(g^*\) on \(M/T\), such that the orbit map \(\pi:M\rightarrow M/T\)  is  a Riemannian submersion.
  The metric \(g^*\) is then rescaled by the function \(F:M/T\rightarrow \R\), \(F(x)=f(x)^{2/(\dim M/T-1)}\), where \(f(x)\) is the volume of the \(T\)-orbit \(\pi^{-1}(x)\).
  B\'erard Bergery proves that the resulting metric \(\tilde{g}\) has positive scalar curvature.

  If we choose the metric \(g\) to be \(G\times T\)-invariant, then 
  every element of \(G\) maps each \(T\)-orbit in \(M\) isometrically onto another \(T\)-orbit because \(G\) and \(T\) commute.
 Therefore the function \(F\) is \(G\)-invariant.

 Moreover, if \(h\) is an element of \(G\) and \(x\in M\), then the differential \(D_xh\) maps the orthogonal complement of \(T_x(Tx)\) in \(T_xM\) isometrically onto the orthogonal complement of \(T_{hx}(Thx)\) in \(T_{hx}M\).
 Since \(\pi:(M,g)\rightarrow (M/T,g^*)\) is a Riemannian submersion, it follows that \(g^*\) is \(G\)-invariant.

  Hence, the metric \(\tilde{g}\) is also \(G\)-invariant.

  For the construction of an invariant metric with positive scalar curvature on \(M\) from a metric on \(M/T\), just pick a \(G\times T\)-invariant connection for the principal \(T\)-bundle \(M\rightarrow M/T\) and a flat invariant metric \(h\) on \(T\).
Then by a result of Vilms \cite{MR0262984} there is a unique metric on \(M\) such that \(M\rightarrow M/T\) is a Riemannian submersion with totally geodesic fibers isometric to \((T,h)\) and horizontal distribution associated to the chosen connection.
By construction this metric is \(G\times T\)-invariant.
  After shrinking the fibers one gets a metric of positive scalar curvature on \(M\).
In the following we will call a metric obtained by such a construction a connection metric.
\end{proof}

Now we turn to our construction of invariant metrics of positive scalar curvature.
For this we need the following lemma.

\begin{lemma}
\label{sec:constr-invar-psc}
  Let \(G\) be a compact Lie-group and \(Z\) be a compact connected \(G\)-manifold with non-empty boundary.
  We view \(Z\) as a bordism between the empty set and \(\partial Z\).
  Then there is a \(G\)-handle decomposition of \(Z\) without handles of codimension \(0\). 
\end{lemma}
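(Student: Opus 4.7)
The plan is to build the handle decomposition via an equivariant Morse--Bott function on \(Z\) and then eliminate the resulting handles of codimension zero.

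First I would fix a \(G\)-invariant Riemannian metric on \(Z\) together with a \(G\)-equivariant collar \(c\colon\partial Z\times[0,\varepsilon)\hookrightarrow Z\); both exist by the equivariant tubular neighborhood theorem. Using \(c\), I construct a \(G\)-invariant smooth function \(f_0\colon Z\to[0,1]\) with \(f_0^{-1}(1)=\partial Z\), \(f_0\) regular on the collar, and \(f_0\leq 1-\delta\) outside the collar. Equivariant Morse theory then allows me to perturb \(f_0\) in the interior to a \(G\)-invariant Morse--Bott function \(f\colon Z\to[0,1]\) keeping \(\partial Z\) as a regular maximal level. The critical orbits of \(f\) supply a \(G\)-handle decomposition of \(Z\), viewed as a bordism from \(\emptyset\) to \(\partial Z\), in which a critical orbit of Morse index \(k\) contributes a \(G\)-handle of codimension \(n-k\), where \(n=\dim Z\).

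In this decomposition, a handle of codimension zero corresponds to a critical orbit \(G\cdot p\subset\interior Z\) on which the Hessian is negative definite on the normal bundle, i.e.\ a ``maximum-type'' critical orbit with \(f(p)<1\). To remove such orbits I would exploit that \(Z\) is connected and \(\partial Z\neq\emptyset\): any such orbit can be joined to \(\partial Z\) by a smooth path, and the \(G\)-saturation of a thin tubular neighborhood of the path produces a \(G\)-invariant tube from \(G\cdot p\) out to \(\partial Z\). Inside the tube I then redefine \(f\) by a monotone profile that pushes the offending maximum out to \(\partial Z\); this amounts to an equivariant Morse cancellation of \(G\cdot p\) against an auxiliary index-\((n-1)\) critical orbit, which only introduces a harmless handle of codimension one.

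The main obstacle is arranging the cancellation to be truly \(G\)-equivariant when the tube crosses strata of different isotropy. When \(p\) is a \(G\)-fixed point the slice theorem supplies \(G\)-invariant coordinates in which \(f=f(p)-\|x\|^2\), so the modification is automatically \(G\)-equivariant. For positive-dimensional critical orbits I would work inside an equivariant tubular neighborhood of \(G\cdot p\) and carry the radial modification along the orbit via the slice representation. The algebraic input that makes the cancellation possible is that a compact connected \(G\)-manifold with non-empty boundary is \(G\)-equivariantly homotopy equivalent to a \(G\)-CW complex of dimension at most \(n-1\); in particular the codimension-zero handles carry no essential equivariant top homology and can be successively cancelled, so after finitely many modifications the resulting \(G\)-handle decomposition of \(Z\) contains no handles of codimension zero.
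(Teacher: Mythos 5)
There is a genuine gap, and it sits exactly where your proposal is vaguest. The whole difficulty of the lemma is to perform the cancellation \emph{equivariantly}, and the fact that makes this possible --- and that the paper's proof hinges on --- is missing from your argument: for an invariant (special \(G\)-)Morse function, every critical orbit of coindex \(0\) is a \emph{principal} orbit (Lemma 13 of Hanke, \cite{MR2376283}). Once this is known, the offending codimension-\(0\) handles, together with the codimension-\(1\) handles needed to cancel them, all live in the principal stratum, where the quotient is a manifold and the cancellation can be carried out non-equivariantly on the orbit space and then lifted back (as in the proof of Theorem 15 of \cite{MR2376283}); since only codimension-\(0\) handles are involved, no dimension restriction is needed. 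Your proposal never establishes that the maximum-type orbits are principal; instead you try to modify \(f\) directly across strata of different isotropy, which is precisely the hard case your slice-theorem remarks do not resolve: the \(G\)-saturation of a tubular neighborhood of a path joining \(G\cdot p\) to \(\partial Z\) is in general not a tube (its structure changes wherever the isotropy jumps), and there is no justification that \(f\) can be redefined invariantly on it so as to stay (special \(G\)-)Morse without creating new coindex-\(0\) critical orbits.

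The cancellation mechanics themselves are also not right as stated. One cannot ``push a maximum out to the boundary'' or delete a single handle by \emph{introducing} an auxiliary index-\((n-1)\) critical orbit: handles disappear only in cancelling pairs, i.e.\ the codimension-\(0\) handle must be cancelled against an \emph{existing} codimension-\(1\) handle whose connecting gradient trajectory (equivalently, the intersection of belt and attaching spheres) is geometrically a single orbit, and arranging this equivariantly is exactly what the reduction to the orbit space accomplishes. Finally, the closing ``algebraic input'' --- that \(Z\) is \(G\)-homotopy equivalent to a \(G\)-CW complex of dimension \(\leq n-1\), so the top handles ``carry no essential equivariant top homology and can be cancelled'' --- is not a proof: homological or homotopy-dimension statements do not by themselves produce smooth (equivariant) handle cancellations; one needs the geometric cancellation argument, which in this lemma is supplied by the principal-orbit fact plus non-equivariant handle cancellation on the orbit space.
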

\begin{proof}
  We choose a special \(G\)-Morse function \(f:Z\rightarrow [0,1]\) without critical orbits on the boundary of \(Z\), such that
  \(f^{-1}(1)=\partial Z\).
For the definition of special \(G\)-Morse functions and some of their properties see \cite{MR2376283} or  \cite{MR975472}.
The map \(f\) induces a \(G\)-handle decomposition of \(Z\) such that the handles correspond one-to-one to the critical orbits of \(f\).
The codimension of a handle corresponding to a critical orbit is given by the coindex of this orbit.
Therefore we have to show that we can change \(f\) in such a way that there  are no critical orbits of coindex \(0\).
By Lemma 13 of \cite{MR2376283}, the critical orbits of coindex \(0\) are principal orbits.

Therefore, as in the proof of Theorem 15 of \cite{MR2376283}, non-equivariant handle cancellation on the orbit space can be used to remove all handles of codimension \(0\).
Note here that we only work with handles of codimension \(0\).
Therefore we do not need the dimension assumption from Hanke's theorem.
\end{proof}

Now we can prove the first of our main theorems.

\begin{theorem}
\label{sec:constr-invar-psc-1}
  Let \(G\) be a compact Lie-group. Assume that there is a circle
  subgroup \(S^1_0\subset Z(G)\) contained in the center of \(G\).
  Moreover, let \(M\) be a closed connected effective \(G\)-manifold such that there is a component \(F\) of \(M^{S^1_0}\) with \(\codim F=2\).
  Then there is an \(G\)-invariant metric of positive scalar curvature on \(M\).
\end{theorem}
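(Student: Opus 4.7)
My plan is to introduce a simple ``model'' $G$-manifold $M_0$, endow it with a $G$-invariant metric of positive scalar curvature by direct construction, and then transfer this metric to $M$ by a finite sequence of equivariant surgeries of codimension at least three, invoking Theorem~\ref{sec:pre}.

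\textbf{Construction of the model and PSC metric on it.} Because $S^1_0\subset Z(G)$ fixes $F$ pointwise, the normal bundle $\nu\to F$ is a $G$-equivariant real rank-$2$ bundle on which $S^1_0$ acts by fibrewise rotation. Let $N=D(\nu)$ be a closed $G$-invariant tubular neighbourhood of $F$, with boundary $\partial N=S(\nu)$, and set
\[
  M_0 \;:=\; S(\nu\oplus\underline{\R}) \;\cong\; D(\nu)\cup_{\partial N} D(\nu),
\]
an $S^2$-bundle over $F$ carrying a natural $G$-action whose $S^1_0$-fixed set consists of the pair of pole sections. Choose a $G$-invariant connection on $\nu\oplus\underline{\R}$ (available by averaging) and a $G$-invariant metric on $F$. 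Vilms' construction (as in the proof of Theorem~\ref{sec:pre-1}) then produces a $G$-invariant Riemannian submersion metric on $M_0\to F$ with totally geodesic round $S^2$-fibres; upon shrinking the fibres by $\varepsilon\ll 1$, the fibrewise scalar curvature ($\sim\varepsilon^{-2}$) dominates in the O'Neill formula and $M_0$ acquires a $G$-invariant metric of positive scalar curvature.

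\textbf{From $M_0$ to $M$.} Both $M$ and $M_0$ contain $N$ as a $G$-invariant neighbourhood of $F$; their complements $Z:=M\setminus\interior N$ and $D(\nu)\subset M_0$ are $G$-manifolds sharing the boundary $\partial N$. Apply Lemma~\ref{sec:constr-invar-psc} to $Z$: there is a $G$-handle decomposition of $Z$ (as a bordism from the empty set to $\partial Z$) with no codimension-$0$ handles. Reading this decomposition dually, $Z$ is obtained from a collar of $\partial N$ by attaching a finite sequence of $G$-equivariant handles. The same sequence, applied inside the ambient manifold, realises $M$ as the result of a chain of $G$-equivariant surgeries on $M_0$ that replace the second $D(\nu)$-half of $M_0$ by $Z$; once every such surgery has codimension at least three, Theorem~\ref{sec:pre} transports the invariant PSC metric from $M_0$ to $M$, finishing the proof.

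\textbf{Anticipated main obstacle.} The gap between Lemma~\ref{sec:constr-invar-psc} (which forbids only codim-$0$ handles) and Theorem~\ref{sec:pre} (which requires codim $\geq 3$) is the principal difficulty: equivariant handles of codim $1$ and $2$ appearing in the decomposition of $Z$ demand special treatment. I expect to dispose of them by exploiting the free $S^1_0$-action on $\partial N$ (and on a collar of it inside $Z$): Theorem~\ref{sec:pre-1} permits one to pass to the $S^1_0$-orbit quotient, a manifold of one dimension lower with boundary $F$, and Lemma~\ref{sec:constr-invar-psc} applied to this quotient further reduces the relevant handle indices. Iterating this orbit-space cancellation together with equivariant handle slides—so that the only handles left attaching $M_0$ to $M$ have cores of $G$-codimension at least three—is where the technical heart of the argument lies.
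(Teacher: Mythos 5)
Your model metric on \(M_0=S(\nu\oplus\underline{\R})\) is fine, but the passage from \(M_0\) to \(M\) has a genuine gap, and it is exactly the one you flag at the end. Lemma~\ref{sec:constr-invar-psc} only excludes handles of codimension \(0\), so the equivariant surgeries needed to replace the second copy of \(D(\nu)\) by \(Z\) will in general include surgeries of codimension \(1\) and \(2\), to which Theorem~\ref{sec:pre} does not apply. Your proposed remedies do not close this gap. Codimension-\(1\) and codimension-\(2\) handles of \(Z\) are in general homologically unavoidable (they are forced whenever the relevant (equivariant) homology of \(Z\) relative to \(\partial Z\) in degrees \(\dim Z-1,\dim Z-2\) is nonzero), so no handle slides or cancellations can remove them. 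Passing to the \(S^1_0\)-orbit space does not help either: on \(Z\) itself the \(S^1_0\)-action need not be free away from \(\partial N\) (there may be further fixed components and nontrivial isotropy), so Theorem~\ref{sec:pre-1} is not applicable to \(Z\); and even where the action is free, a handle of the total space corresponds to a handle of the same \emph{codimension} (coindex) in the quotient, so the handle indices relevant for the surgery theorem are not improved.

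The paper resolves this by the opposite maneuver: instead of quotienting \(Z\), it thickens it. One sets \(X=Z\times D^2\), with an auxiliary circle \(S^1_1\) acting on \(D^2\) by multiplication with the inverse. A \(G\)-handle decomposition of \(Z\) without codimension-\(0\) handles induces a \(G\times S^1_1\)-handle decomposition of \(X\) in which every handle has codimension at least \(3\), so Theorem~\ref{sec:pre} yields a \(G\times S^1_1\)-invariant PSC metric on \(\partial X=SF\times D^2\cup Z\times S^1\). One then checks that the diagonal circle \(\diag(S^1_0\times S^1_1)\) acts freely on \(\partial X\) and that the quotient is \(G\)-equivariantly diffeomorphic to \(M\) (the piece \(SF\times D^2\) descends to the normal disc bundle of \(F\), and \(Z\times S^1\) descends to \(Z\)), after which Theorem~\ref{sec:pre-1} descends the metric to a \(G\)-invariant PSC metric on \(M\). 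So your instinct to invoke Theorem~\ref{sec:pre-1} is the right one, but it must be applied to the free diagonal action on \(\partial(Z\times D^2)\), not to the \(S^1_0\)-action on (a collar inside) \(Z\); without the \(\times D^2\) trick that raises all handle codimensions by two, the low-codimension surgeries remain an obstruction your argument cannot get past.
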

\begin{proof}
  Let \(Z\) be \(M\) with an open tubular neighborhood of \(F\) removed.
  Then \(Z\) is a \(G\)-manifold with boundary SF, the normal sphere bundle of \(F\).
  Let \(S^1_1\) act on \(D^2\subset\C\) via multiplication with the inverse.
  Then we have a \(G\times S^1_1\)-manifold \(X=Z\times D^2\) (with corners equivariantly smoothed).
  A \(G\)-handle decomposition of \(Z\) induces a \(G\times S^1_1\)-handle decomposition of \(X\) (both viewed as bordisms between the empty set and their boundaries) such that
  \begin{enumerate}
  \item the \(G\)-handles of \(Z\) are one-to-one to the \(G\times S^1_1\)-handles of \(X\).
  \item the codimension of a handle of \(X\) is given by the codimension of the corresponding handle of \(Z\) plus two.
  \end{enumerate}
By Lemma~\ref{sec:constr-invar-psc}, we may assume that there is no handle of codimension \(0\) in the decomposition of \(Z\).
Therefore in the decomposition of \(X\) all handles have codimension at least three.
Hence, it follows from Theorem~\ref{sec:pre} that \(\partial X\) admits an \(G\times S^1_1\)-invariant metric of positive scalar curvature.

Now we have
\begin{equation*}
  \partial X = SF\times D^2 \cup_F Z\times S^1,
\end{equation*}
where the gluing map \(F:\partial(SF\times D^2)=SF\times S^1\rightarrow
SF\times S^1=\partial(Z\times S^1)\) is given by \(f\times g^{-1}\).
Here \(f:SF\hookrightarrow Z\) and \(g:S^1\hookrightarrow D^2\) are
the natural inclusions.

Note that \(SF\) is a principal \(S^1\)-bundle over \(F\) and the action of \(S^1_0\) on this bundle is given by multiplication on the fibers.
Therefore the orbit space of the free \(\diag(S^1_0\times S^1_1)\)-action on \(SF\times D^2\) is the normal disc bundle of \(F\) in \(M\).
A diffeomorphism is induced by the map
\begin{align*}
  SF\times D^2&\rightarrow N(F,M)&(x,\lambda)&\mapsto \lambda x,
\end{align*}
where \(\lambda x\) is given by complex multiplication of \(\lambda\in
D^2\subset \C\) and \(x\in SF\) with respect to the complex structure on \(N(F,M)\)
induced by the action of \(S^1_0\).

Moreover, the orbit space of the free \(\diag(S^1_0\times
S^1_1)\)-action on \(Z\times S^1\) is diffeomorphic to \(Z\).
A diffeomorphism is induced by the map
\begin{align*}
  Z\times S^1&\rightarrow Z&(x,\lambda)&\mapsto \lambda\cdot x,
\end{align*}
where \(\cdot\) denotes the action of \(S^1_0\) on \(Z\).

Hence, it follows from the special form of \(F\) described above that \(F\)
induces the natural inclusion \(SF\hookrightarrow Z\) on the orbit space.

Therefore the quotient of the free \(\diag(S^1_0\times S^1_1)\)-action on \(\partial X\) is \(G\)-equivariantly  diffeomorphic to \(M\).
Since the action of \(G\) on \(\partial X\) commutes with the action of \(\diag(S^1_0\times S^1_1)\), it follows from Theorem~\ref{sec:pre-1} that \(M\) admits a \(G\)-invariant metric of positive scalar curvature. 
\end{proof}

Note that if a torus \(T\) acts effectively on a manifold \(M\), then all fixed point components have codimension greater or equal to \(2\dim T\).
For the case of equality we have the following corollary. 

\begin{cor}
\label{sec:constr-invar-psc-2}
  Let \(T\) be a torus that acts effectively on the closed connected manifold \(M\).
  If there is a component of \(M^T\) of codimension \(2\dim T\),
  then there is an \(T\)-invariant metric of positive scalar curvature on \(M\).
\end{cor}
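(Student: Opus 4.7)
The plan is to reduce directly to Theorem~\ref{sec:constr-invar-psc-1} by exhibiting inside the torus $T$ a circle subgroup whose fixed point set contains a codimension-two component. Since $T$ is abelian, $Z(T)=T$, so any circle $S^1_0\subset T$ will automatically satisfy the centrality hypothesis of Theorem~\ref{sec:constr-invar-psc-1}, and the only issue is to find a circle whose fixed set has a codimension-two component.

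I would begin by fixing a component $F$ of $M^T$ with $\codim F = 2n$, where $n=\dim T$, and a point $p\in F$. The slice representation of $T$ on the normal space $N_pF$ splits as a direct sum of $n$ complex one-dimensional weight spaces with weights $\alpha_1,\dots,\alpha_n\in\Hom(T,S^1)$. The key observation is that the effectiveness hypothesis forces these weights to be a $\Z$-basis of the character lattice: effectiveness of the $T$-action at a global fixed point is equivalent to effectiveness of the slice representation, which in turn means $\bigcap_i \ker\alpha_i = \{e\}$, and since there are exactly $n=\dim T$ weights, this can happen only when the $\alpha_i$ generate $\Hom(T,S^1)$.

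Using this basis, I would define $S^1_0\subset T$ as the identity component of $\bigcap_{j\neq 1}\ker\alpha_j$; by the basis property this is a one-dimensional subtorus, i.e.\ a circle. On each weight space of $N_pF$ with $j\neq 1$ the character $\alpha_j$ restricts trivially to $S^1_0$, while on the first weight space it restricts non-trivially. Consequently $(N_pF)^{S^1_0}$ has real codimension $2$ in $N_pF$, so the connected component $F'$ of $M^{S^1_0}$ containing $F$ has codimension exactly $2$ in $M$. With $S^1_0\subset T = Z(T)$ and the codimension-two component $F'\subset M^{S^1_0}$, Theorem~\ref{sec:constr-invar-psc-1} applied with $G=T$ produces a $T$-invariant metric of positive scalar curvature on $M$.

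The only step that is not immediate is the assertion that the weights form a $\Z$-basis rather than merely spanning over $\Q$, and the main obstacle is giving a clean argument that effectiveness of the $T$-action on $M$ really implies effectiveness of the slice representation at $p\in F$ (not just at a principal orbit). This follows because the kernel of the representation on $N_pF$ acts trivially both on $T_pF$ (trivially since $p$ is $T$-fixed) and on $N_pF$, hence acts trivially on a $T$-invariant neighborhood of $p$ via the equivariant exponential map, and therefore on all of the connected manifold $M$.
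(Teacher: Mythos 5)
Your proposal is correct and takes essentially the same route as the paper: both reduce to Theorem~\ref{sec:constr-invar-psc-1} by noting that, since the slice representation at a point of the codimension-\(2\dim T\) fixed component is (up to automorphism of \(T\)) the standard representation, some circle subgroup \(S^1_0\subset T=Z(T)\) fixes a codimension-two submanifold containing \(F\). The only difference is that you spell out why effectiveness forces the normal weights to be a basis of the character lattice, a step the paper asserts without proof.
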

\begin{proof}
  Let \(F\) be a component of \(M^T\) of codimension \(2\dim T\) and \(x\in F\).
  Then up to an automorphism of \(T\) the \(T\)-representation on the normal space \(N_x(F,M)\) is given by the standard \(T\)-representation.
  Therefore there is a codimension-two subspace of \(N_x(F,M)\) which is fixed pointwise by a circle subgroup \(S^1\subset T\).
  Hence, it follows that \(F\) is contained in a codimension-two submanifold of \(M\) which is fixed pointwise by \(S^1\).
  Now the statement follows from Theorem~\ref{sec:constr-invar-psc-1}.
\end{proof}

A torus manifold is a closed connected \(2n\)-dimensional manifold \(M\) with an effective action of an \(n\)-dimensional torus \(T\), such that \(M^T\neq \emptyset\).
Smooth compact toric varieties are examples of torus manifolds.
As a consequence of Corollary~\ref{sec:constr-invar-psc-2} we get the following corollary.

\begin{cor}
\label{sec:constr-invar-metr-1}
  Every torus manifold admits an invariant metric of positive scalar curvature.
\end{cor}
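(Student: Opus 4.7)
The plan is to apply Corollary~\ref{sec:constr-invar-psc-2} with \(G = T\). Since a torus manifold satisfies \(M^T \neq \emptyset\) by definition, all that remains is to verify that some component of \(M^T\) has codimension \(2\dim T = \dim M\); I will in fact show that every point of \(M^T\) is an isolated component.

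To see this, I would fix \(p \in M^T\) and analyze the isotropy \(T\)-representation on \(T_pM \cong \R^{2n}\). A standard application of the equivariant tubular neighborhood theorem at the fixed point \(p\), combined with connectedness of \(M\) and effectiveness of the \(T\)-action, shows that this representation is faithful: any \(t \in T\) acting trivially on \(T_pM\) acts trivially on a \(T\)-invariant neighborhood of \(p\), whose component in \(M^{\langle t\rangle}\) is then open in \(M\), so by connectedness \(t\) acts trivially on all of \(M\) and hence \(t=1\). Any faithful real representation of \(T^n\) on \(\R^{2n}\) decomposes as a sum of \(n\) non-trivial real two-dimensional weight representations whose weights must span the character lattice of \(T\) (otherwise a nontrivial circle subgroup would act trivially). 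In particular the representation has no nonzero fixed vector, so the component of \(M^T\) through \(p\) reduces to \(\{p\}\), which has codimension \(2n = 2\dim T\) in \(M\).

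Applying Corollary~\ref{sec:constr-invar-psc-2} to this component then produces the desired \(T\)-invariant metric of positive scalar curvature on \(M\). The only real content beyond invoking the corollary is the codimension observation for fixed points of torus manifolds, and I do not anticipate any genuine obstacle.
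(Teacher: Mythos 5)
Your proof is correct and follows the same route as the paper: the paper simply observes that the fixed point set of a torus manifold consists of isolated points and then invokes Corollary~\ref{sec:constr-invar-psc-2}. Your only addition is to spell out, via the slice theorem and the weight decomposition of the faithful isotropy representation, why the fixed points are isolated, which the paper treats as a known fact.
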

\begin{proof}
  The fixed point set of a torus manifold consists of isolated points. Therefore the statement follows from Corollary \ref{sec:constr-invar-psc-2}.
\end{proof}

As an application of Corollary~\ref{sec:constr-invar-metr-1} we can improve the upper bound for the degree of symmetry of manifolds which do not admit metrics of positive scalar curvature given by Lawson and Yau in \cite{MR0358841}.

\begin{cor}
\label{sec:constr-invar-metr}
  Let \(G\) be a compact connected Lie-group which acts effectively on a closed connected  manifold \(M\) which does not admit a metric of positive scalar curvature and has non-zero Euler-characteristic.
  Then \(G\) is a torus of dimension less than \(\frac{1}{2}\dim M\).
\end{cor}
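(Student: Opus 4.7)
The proof should be a short assembly of tools already in the paper: the Lawson--Yau theorem cited in the introduction, the elementary inequality $\codim F \geq 2\dim T$ for fixed components of an effective torus action, and the newly proved Corollary~\ref{sec:constr-invar-metr-1}. The plan is to first argue that $G$ must be a torus, then to bound its dimension from above via a component of the fixed point set, and finally to rule out equality.

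First I would note that if $G$ were non-abelian, then by the result of Lawson and Yau quoted at the start of the introduction, $M$ would carry a $G$-invariant metric of positive scalar curvature, contradicting the hypothesis. Since $G$ is compact, connected, and abelian, it is a torus $T$.

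Next, since $\chi(M)\neq 0$, a standard application of the Lefschetz fixed point theorem (or of the fact that $\chi(M^T)=\chi(M)$ for torus actions) ensures $M^T \neq \emptyset$. Pick any component $F$ of $M^T$. Because $T$ acts effectively, the slice representation at a point of $F$ is a faithful $T$-representation without trivial summand, hence $\codim F \geq 2\dim T$. This yields
\begin{equation*}
  \dim M \;\geq\; \dim F + 2\dim T \;\geq\; 2\dim T,
\end{equation*}
so $\dim T \leq \tfrac{1}{2}\dim M$.

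The one step that needs care is ruling out equality. Suppose $\dim T = \tfrac{1}{2}\dim M$. Then for any component $F$ of $M^T$ we have $\codim F \geq \dim M$, forcing $\dim F = 0$; thus the fixed set is a nonempty finite collection of points and $(M,T)$ is, by definition, a torus manifold. Corollary~\ref{sec:constr-invar-metr-1} then supplies a $T$-invariant metric of positive scalar curvature on $M$, contradicting the hypothesis. Hence $\dim T < \tfrac{1}{2}\dim M$, and there is nothing else to prove. The only conceivable obstacle is verifying the slice-representation bound $\codim F \geq 2\dim T$, but this is the same codimension estimate already invoked in the statement of Corollary~\ref{sec:constr-invar-psc-2}, so no new work is needed.
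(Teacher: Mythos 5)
Your proof is correct and follows essentially the same route as the paper: Lawson--Yau forces $G$ to be a torus, the non-vanishing Euler characteristic gives fixed points and hence $2\dim G\leq\dim M$ via the standard codimension bound, and equality is excluded because it would make $M$ a torus manifold, contradicting Corollary~\ref{sec:constr-invar-metr-1}. The only difference is that you spell out the slice-representation argument for $\codim F\geq 2\dim T$, which the paper states without proof.
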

\begin{proof}
  By a result of Lawson and Yau \cite{MR0358841}, \(G\) must be a torus.
  Because the Euler-characteristic of \(M\) is non-zero, there are \(G\)-fixed points in \(M\).
  Therefore we have \(2\dim G\leq \dim M\).
  If equality holds in this inequality, then \(M\) is a torus manifold.
  Therefore, by Corollary \ref{sec:constr-invar-metr-1}, we have \(2\dim G<\dim M\).
\end{proof}

 By \cite[Corollary 2.7]{MR0219077}, a homotopy sphere of dimension
 greater than one bounds a
 Spin-manifold if and only if it has vanishing \(\alpha\)-invariant.
 Moreover, homotopy spheres with non-vanishing \(\alpha\)-invariant
 only exist in dimensions greater or equal to \(9\) congruent to \(1\)
 or \(2 \mod 8\).
 In these dimensions they constitute half of all homotopy spheres (see
 \cite[Proof of Theorem 2]{MR0180978} and \cite[Theorem 1.2]{MR0198470}).
 Since the \(\alpha\)-invariant vanishes for spin-manifolds which
 admit metrics of positive scalar curvature, such homotopy spheres do not 
 admit a metric of positive scalar curvature.

Hence, even-dimensional exotic spheres \(\Sigma\)  which do not bound Spin-manifolds are examples of manifolds for which the assumptions on \(M\) from the above corollary hold.
Other examples of manifolds \(M\) can be constructed as follows. 
Let \(N\) be a torus manifold which is spin with \(10\leq \dim N\equiv 2\mod 8\).
Then the \(\alpha\)-invariant of \(M=N\# \Sigma\) does not vanish.
Therefore \(M\) does not admit a metric of positive scalar curvature and
 satisfies the assumptions of the corollary.
Therefore \(M\) does not admit a smooth action of a torus of dimension \(\frac{1}{2}\dim M=\frac{1}{2}\dim N\).

For four-dimensional \(S^1\)-manifolds we also have the following two corollaries.

\begin{cor}
  Let \(M\) be a closed connected effective \(S^1\)-manifold with \(\dim M=4\) and \(\chi(M)<0\).
  Then there is an invariant metric of positive scalar curvature on \(M\).
\end{cor}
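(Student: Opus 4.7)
The plan is to reduce to Theorem \ref{sec:constr-invar-psc-1} by showing that the hypothesis $\chi(M)<0$ forces the existence of a codimension-two fixed point component.

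First I would observe that since the $S^1$-action is effective and $\dim M = 4$, every component $F$ of $M^{S^1}$ has $\dim F \in \{0, 2\}$. (A 4-dimensional component would be all of $M$, contradicting effectiveness.) Thus the fixed point set is a disjoint union of isolated points $p_1, \ldots, p_k$ and closed surfaces $\Sigma_1, \ldots, \Sigma_\ell$.

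Next I would invoke the standard fact that for a smooth $S^1$-action on a closed manifold one has
\begin{equation*}
  \chi(M) = \chi(M^{S^1}) = k + \sum_{j=1}^{\ell} \chi(\Sigma_j).
\end{equation*}
Since $\chi(M) < 0$ and $k \geq 0$, at least one of the $\chi(\Sigma_j)$ must be negative. In particular $\ell \geq 1$ and there is a surface component $\Sigma_j \subset M^{S^1}$, which has $\codim \Sigma_j = 2$.

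Finally, I would apply Theorem~\ref{sec:constr-invar-psc-1} with $G = S^1 = S^1_0$ (the center condition $S^1 \subset Z(S^1)$ is trivial) to the component $F = \Sigma_j$, which yields the desired $S^1$-invariant metric of positive scalar curvature on $M$. There is no serious obstacle; the argument is essentially an application of the Euler characteristic localization formula combined with the dimension count, and then invoking the previously established main theorem.
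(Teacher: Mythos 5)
Your argument is correct and is essentially the paper's own proof: the paper likewise uses \(\chi(M)=\chi(M^{S^1})<0\) to force a two-dimensional (hence codimension-two) fixed point component and then applies Theorem~\ref{sec:constr-invar-psc-1}. Your write-up merely adds the explicit dimension count on the components, which the paper leaves implicit.
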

\begin{proof}
  Since \(\chi(M^{S^1})=\chi(M)<0\), there must be a fixed point component of dimension two.
 Hence, the corollary follows from Theorem~\ref{sec:constr-invar-psc-1}.
\end{proof}

\begin{cor}
  Let \(M\) be a closed connected oriented semi-free \(S^1\)-manifold with \(\dim M=4\) and non-vanishing signature. Then there is an invariant metric of positive scalar curvature on \(M\).
\end{cor}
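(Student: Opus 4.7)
The plan is to reduce the problem to Theorem~\ref{sec:constr-invar-psc-1} (applied with $G = S^1$) by producing a component of $M^{S^1}$ of codimension $2$. Since the action is semi-free and non-trivial, the weights of the isotropy representation at any fixed point lie in $\{\pm 1\}$, and every component of $M^{S^1}$ is a closed submanifold of codimension $2$ or $4$. It therefore suffices to rule out the possibility that every component of $M^{S^1}$ is an isolated point.

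I would argue by contradiction, so assume all fixed components are $0$-dimensional. At each $p \in M^{S^1}$ denote the tangential weights by $(\epsilon_1(p), \epsilon_2(p)) \in \{\pm 1\}^2$. Because the only isotropy subgroups are trivial or all of $S^1$, one has $M^g = M^{S^1}$ for every $g \in S^1 \setminus \{1\}$, and the Atiyah-Singer $G$-signature theorem yields
$$\sign(g, M) = \sum_{p \in M^{S^1}} \prod_{i=1}^{2} \frac{1 + g^{\epsilon_i(p)}}{1 - g^{\epsilon_i(p)}}.$$
Using $(1 + g^{-1})/(1 - g^{-1}) = -(1+g)/(1-g)$, each summand collapses to $\pm (1+g)^2/(1-g)^2$, with sign depending only on whether the two weights at $p$ coincide, so
$$\sign(g, M) = (N_+ - N_-)\,\frac{(1+g)^2}{(1-g)^2}, \qquad g \in S^1 \setminus \{1\},$$
where $N_\pm$ counts fixed points with equal-sign (respectively opposite-sign) weight pairs.

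On the other hand $g \mapsto \sign(g, M)$ is the character of the virtual $S^1$-representation $H^+(M) - H^-(M)$ on cohomology, hence continuous on all of $S^1$ with value $\sign(M)$ at $g = 1$. Continuity at $g = 1$ forces $N_+ = N_-$, so the right-hand side is identically $0$ on $S^1 \setminus \{1\}$ and therefore $\sign(M) = 0$, contradicting the hypothesis. Consequently some component $F$ of $M^{S^1}$ is $2$-dimensional, i.e.\ of codimension $2$, and Theorem~\ref{sec:constr-invar-psc-1} supplies an $S^1$-invariant metric of positive scalar curvature on $M$.

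The main obstacle is simply invoking the $G$-signature localization correctly; once in hand, the weight-counting collapse is routine. A self-contained alternative, if one wishes to avoid quoting the full equivariant index theorem, is to replace a neighborhood of each isolated fixed point by the corresponding standard disc model $(D^4, S^1)$ and exhibit an equivariant oriented null-bordism for $M$ built from such signature-zero pieces, concluding $\sign(M) = 0$ by bordism invariance of the signature.
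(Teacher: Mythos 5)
Your proof is correct and takes essentially the same route as the paper: reduce to producing a two-dimensional fixed point component and then invoke Theorem~\ref{sec:constr-invar-psc-1}. The only difference is that where the paper simply cites Corollary 6.24 of Hirzebruch--Berger--Jung for the fact that an oriented semi-free \(S^1\)-action on a \(4\)-manifold with only isolated fixed points forces \(\sign(M)=0\), you re-derive this via the Atiyah--Singer \(G\)-signature theorem (and your localization computation, including the well-definedness of the equal-sign versus opposite-sign count \(N_\pm\) and the blow-up of \((1+g)^2/(1-g)^2\) as \(g\to 1\), is sound).
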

\begin{proof}
  By Corollary 6.24 of \cite{MR1150492}, there is a fixed point component of dimension two in \(M\).
 Hence, the corollary follows from Theorem~\ref{sec:constr-invar-psc-1}.
\end{proof}

\section{Constructions of non-invariant metrics of positive scalar curvature}
\label{sec:constr-psc-metr-1}

In this section we construct non-invariant metrics of positive scalar curvature on many manifolds with circle actions. 
For this construction we need the following lemma.

\begin{lemma}
\label{sec:constr-psc-metr}
  Let \(M\) be a closed connected effective \(S^1\)-manifold of dimension \(n>1\).
  If \(n>2\), then \(M\) is equivariantly bordant to a closed connected effective \(S^1\)-manifold \(N\) such that there is a component of \(N^{S^1}\) of codimension two.
  If \(n=2\), then \(M\) is equivariantly bordant to a closed effective \(S^1\)-manifold with at most two components such that each component contains an isolated \(S^1\)-fixed point.
  Moreover, we have:
  \begin{enumerate}
  \item \label{item:1} If \(M\) is oriented, then \(N\) is also oriented and they are equivariantly bordant as oriented manifolds.
  \item \label{item:3} If \(M\) is spin and the \(S^1\)-action on \(M\) is of odd type, then the same holds for \(N\) and they are equivariantly bordant as Spin-manifolds.
   \end{enumerate}
\end{lemma}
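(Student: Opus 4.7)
The plan is to realise the desired $N$ as the result of a single equivariant surgery on $M$ along an orbit, with the bordism $W$ given by the trace of that surgery attached to the cylinder $M\times[0,1]$. Since the $S^1$-action on $M$ is effective and $\dim M>1$, the set of free orbits is a non-empty open subset. Pick a free orbit $\gamma\subset M$ with equivariant tubular neighbourhood $U\cong S^1\times D^{n-1}$, where $S^1$ acts by translation on the first factor and trivially on the second.

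For $n>2$, glue an equivariant 2-handle $H=D^2\times D^{n-1}$ to $M\times\{1\}$ along $U$, letting $S^1$ act on $H$ by rotation on $D^2$ and trivially on $D^{n-1}$. The new top boundary is
\begin{equation*}
N \;=\; \bigl(M\setminus U\bigr)\;\cup_{S^1\times S^{n-2}}\;\bigl(D^2\times S^{n-2}\bigr),
\end{equation*}
and the belt sphere $\{0\}\times S^{n-2}\subset N$ is a codimension-two component of $N^{S^1}$. Connectedness of $N$ follows because $M\setminus U$ remains connected (we removed a codimension $n-1\geq 2$ subset of the connected $M$) and the glued piece $D^2\times S^{n-2}$ is connected; effectiveness is preserved because the modification is local and the new $S^1$-action on $D^2$ is effective.

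For $n=2$ the same surgery replaces an annulus by two discs $D^2\sqcup D^2$, so $N$ has at most two components and the centre of each attached disc is an isolated fixed point, giving the second statement. For \eqref{item:1}, orienting $H$ compatibly with $M$ turns $W$ into an oriented bordism, and $N$ inherits an orientation.

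The step I expect to be the main obstacle is \eqref{item:3}. The spin structure of $M$ extends over the 2-handle $H$ if and only if the spin structure induced on $\gamma\cong S^1$ is the one that bounds $D^2$ (the ``anti-periodic'' one); thus one must argue that a free orbit $\gamma$ with this property can always be chosen, possibly after first forming an equivariant connected sum with an auxiliary $S^1$-manifold that is trivial in equivariant spin bordism but provides a free orbit with the correct spin structure. Once the spin structure extends across $W$, one must check that the odd-type property is preserved. This follows from the fact that being of odd type means the subgroup $\{\pm 1\}\subset S^1$ acts non-trivially on the spin principal bundle, a global invariant of the equivariant spin structure which is unchanged by a surgery supported in the neighbourhood of a free orbit on which $\{\pm 1\}$ already acts freely; equivalently, the characteristic number detecting odd type is insensitive to this local modification.
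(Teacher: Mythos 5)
Your construction (surgery on a free orbit with the $2$-handle $D^2\times D^{n-1}$, rotation action on $D^2$, trace of the surgery as the bordism, the $n=2$ and orientation cases) is exactly the paper's argument. The problem is part \eqref{item:3}: you correctly isolate the crux --- the handle is spin-compatible if and only if the orbit carries the bounding (anti-periodic) spin structure --- but you do not prove this; instead you defer to an unproved claim that ``a free orbit with this property can always be chosen, possibly after first forming an equivariant connected sum with an auxiliary $S^1$-manifold''. This is a genuine gap, and the proposed workaround cannot work unconditionally: if one could always arrange the bounding structure on some orbit without using the odd-type hypothesis, the lemma would hold for even actions as well, and combined with Theorem~\ref{sec:constr-invar-psc-1} this would force $\alpha(M)=0$ for every spin $S^1$-manifold, contradicting the even-type actions on homotopy spheres with non-vanishing $\alpha$-invariant cited in the introduction. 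So the odd-type hypothesis must enter precisely at the step you leave open.

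The missing idea is that the hypothesis itself determines the restricted spin structure, with no choice of orbit and no auxiliary connected sum needed. Let $\hat S^1\to S^1$ be the connected double cover and $\mathbb{Z}_2=\kernel(\hat S^1\to S^1)$; the $\hat S^1$-action always lifts to the spin bundle of $M$, and the action is of odd type exactly when $\mathbb{Z}_2$ acts non-trivially on each fiber. On $S^1_0\times D^{n-1}$ there are two spin structures, and the translation action is of even type for the non-bounding (Lie group) one and of odd type for the bounding one. Hence odd type of the action on $M$ forces the induced structure on \emph{every} principal orbit neighbourhood to be the bounding one, which is precisely the structure induced from $D^2\times D^{n-1}$; therefore the trace $W$ is an equivariant spin bordism. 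Oddness of the action on $N$ then follows because $\mathbb{Z}_2$ acts non-trivially on the fibers of the spin bundle of the connected bordism $W$, hence also over $N$ --- this is the precise form of your (essentially correct, but vaguely phrased) locality remark; there is no ``characteristic number detecting odd type'' needed.
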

\begin{proof}
  Let \(S^1_0\hookrightarrow M\) be the inclusion of a principal orbit in \(M\).
  Then the equivariant normal bundle of \(S_0^1\) is given by \(S_0^1\times \R^{n-1}\), where \(S^1\) acts trivially on the \(\R^{n-1}\)-factor.
  Therefore we can do equivariant surgery on \(S^1_0\) to obtain an \(S^1\)-manifold \(N\).
  If \(n>2\), then \(N\) is connected and has a fixed point component of codimension two.
  If \(n=2\), then \(N\) might have two components both containing an isolated \(S^1\)-fixed point.
  If we let \(W\) be the trace of this surgery we see that (\ref{item:1}) holds.

  For (\ref{item:3}) we need an extra argument to show that \(W\) is spin.
  This is the case if the inclusions
  \begin{align*}
    S_0^1\times D^{n-1}&\hookrightarrow M&&\text{and}& S_0^1\times D^{n-1}&\hookrightarrow D^2\times D^{n-1}
  \end{align*}
  induce the same Spin-structure on \(S_0^1\times D^{n-1}\).

  Let \(\hat{S}^1\) be the connected double cover of \(S^1\).
  Then the \(S^1\)-action on \(M\) induces an action of \(\hat{S}^1\) on \(M\).
  This action of \(\hat{S}^1\) lifts into the Spin-structure on \(M\).
  Let \(\mathbb{Z}_2\subset \hat{S}^1\) be the kernel of \(\hat{S}^1\rightarrow S^1\).
  If the \(S^1\)-action on \(M\) is of odd type, then \(\mathbb{Z}_2\) acts non-trivially on each fiber of the principal Spin-bundle over \(M\).
  If the \(S^1\)-action on \(M\) is of even type, then \(\mathbb{Z}_2\) acts trivially on this bundle.

  On \(S_0^1\times D^{n-1}\) there are two Spin-structures.
  For one of them the \(S^1\)-action on \(S_0^1\times D^{n-1}\) is of even type.
  For the other it is of odd type.
  By the above remark about the action of \(\mathbb{Z}_2\), it follows that the inclusion \(S_0^1\times D^{n-1}\hookrightarrow M\), induces the second Spin-structure on \(S_0^1\times D^{n-1}\).
  This is also the Spin-structure which is induced by the inclusion of \(S_0^1\times D^{n-1}\hookrightarrow D^2\times D^{n-1}\).
  Therefore \(W\) is an equivariant Spin-cobordism.
\end{proof}

By combining Theorem \ref{sec:constr-invar-psc-1} and Lemma \ref{sec:constr-psc-metr} we recover the following result of Ono.

\begin{cor}[\cite{MR1043417}]
  Let \(M\) be a closed connected Spin-manifold with a \(S^1\)-action of odd type. Then the \(\alpha\)-invariant of \(M\)  vanishes.
\end{cor}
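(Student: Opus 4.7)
The plan is to deduce Ono's vanishing statement directly by combining the two preceding results. Starting from the closed connected Spin-manifold $M$ with $S^1$-action of odd type, apply Lemma \ref{sec:constr-psc-metr}, specifically item (\ref{item:3}), to produce an equivariant Spin-cobordism from $M$ to a manifold $N$ which (if $\dim M > 2$) is closed, connected, and has a fixed point component of codimension two, or (if $\dim M = 2$) has at most two connected components, each containing an isolated fixed point, which again is a codimension-two fixed component. In either case each connected component of $N$ satisfies the hypotheses of Theorem \ref{sec:constr-invar-psc-1} with $G=S^1$, so $N$ admits an $S^1$-invariant (and in particular non-equivariant) Riemannian metric of positive scalar curvature.

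Next, invoke the Lichnerowicz--Hitchin vanishing theorem: a closed Spin-manifold admitting a metric of positive scalar curvature has vanishing $\alpha$-invariant. This gives $\alpha(N) = 0$. Finally, use the fact that the $\alpha$-invariant is a Spin-bordism invariant (being the $KO$-theoretic index associated with the Spin Dirac operator, factoring as a ring homomorphism $\alpha\colon \Omega^{\mathrm{Spin}}_* \to KO_*$), so the Spin-cobordism $W$ constructed in Lemma \ref{sec:constr-psc-metr} yields $\alpha(M) = \alpha(N) = 0$.

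The only real subtlety lies in verifying that the hypotheses of Lemma \ref{sec:constr-psc-metr}(\ref{item:3}) are met and that its output is genuinely a Spin-cobordism between $M$ and $N$ as Spin-manifolds (so that $\alpha$-invariants match); this is already established in the excerpt. The low-dimensional cases $\dim M \le 2$ must be glanced at separately, but they are either trivial (no odd-type $S^1$-action exists on $S^1$) or covered by the second clause of Lemma \ref{sec:constr-psc-metr} together with Theorem \ref{sec:constr-invar-psc-1} applied to each component. No obstacle beyond a careful bookkeeping of Spin structures arises, since the hard equivariant-topological work was carried out in the preceding two results.
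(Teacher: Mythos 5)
Your proposal is correct and follows essentially the same route as the paper: surgery on a principal orbit (Lemma \ref{sec:constr-psc-metr}(\ref{item:3})) gives a Spin-bordant manifold with a codimension-two fixed component, Theorem \ref{sec:constr-invar-psc-1} supplies a positive scalar curvature metric on it, and Spin-bordism invariance of \(\alpha\) finishes the argument. The only point the paper makes explicit that you skip is that the odd-type action is automatically non-trivial, so one may pass to an effective \(S^1\)-action before invoking the lemma, which assumes effectiveness.
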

\begin{proof}
  The \(\alpha\)-invariant is a Spin-bordism invariant and vanishes for Spin-manifolds which admit metrics of positive scalar curvature.
  Since an action of odd type is always non-trivial, we may assume that the \(S^1\)-action on \(M\) is effective.
  Because by Theorem \ref{sec:constr-invar-psc-1} and Lemma \ref{sec:constr-psc-metr}, \(M\) is Spin-bordant to a manifold with a metric of positive scalar curvature the statement follows.
\end{proof}

As another application of Theorem~\ref{sec:constr-invar-psc-1} we get:

\begin{theorem}
\label{sec:constr-metr-posit-1}
  Let \(M\) be a closed connected effective \(S^1\)-manifold of dimension \(n\geq 5\) such that the principal orbits in \(M\) are null-homotopic. 
This condition guarantees that the \(S^1\)-action on \(M\) lifts to an \(S^1\)-action on the universal cover \(\tilde{M}\) of \(M\).
    If the universal cover of \(M\) is a Spin-manifold assume that the lifted \(S^1\)-action on \(\tilde{M}\) is of odd type.

    Then \(M\) admits a non-invariant metric of positive scalar curvature.
\end{theorem}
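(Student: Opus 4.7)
The plan is to combine the equivariant surgery from Lemma~\ref{sec:constr-psc-metr} with Theorem~\ref{sec:constr-invar-psc-1}, reducing (via the universal cover) to the simply connected classification of manifolds admitting positive scalar curvature due to Gromov--Lawson and Stolz.

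First, apply Lemma~\ref{sec:constr-psc-metr} to \(M\): there exists a closed effective \(S^1\)-manifold \(N\) that is equivariantly bordant to \(M\) and carries a fixed-point component of codimension two. The trace of the bordism is the manifold obtained from \(M\times I\) by attaching a single equivariant \(2\)-handle along a null-homotopic principal \(S^1\)-orbit; because this orbit sits in codimension \(n-1\geq 4\), the underlying non-equivariant surgery has codimension \(\geq 3\). Moreover, Lemma~\ref{sec:constr-psc-metr}(\ref{item:1}) and (\ref{item:3}) guarantee that the bordism respects orientation and, when \(\tilde M\) is spin with \(S^1\)-action of odd type, spin structures as well (after lifting to universal covers). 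Theorem~\ref{sec:constr-invar-psc-1} then produces an \(S^1\)-invariant metric of positive scalar curvature on \(N\).

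Second, lift everything to the universal cover. The \(S^1\)-action on \(M\) lifts to \(\tilde M\) by the null-homotopy hypothesis on principal orbits, and the equivariant surgery lifts simultaneously along each \(\pi_1(M)\)-translate of the surgery locus, yielding a \(\pi_1(M)\times S^1\)-equivariant cobordism between \(\tilde M\) and a corresponding cover \(\tilde N\) of \(N\); the invariant psc metric on \(N\) pulls back to a psc metric on \(\tilde N\). Since \(\tilde M\) is simply connected of dimension \(n\geq 5\), one of two alternatives applies. If \(\tilde M\) is not spin, the theorem of Gromov and Lawson furnishes a metric of positive scalar curvature. If \(\tilde M\) is spin, the odd-type hypothesis combined with the Ono-type corollary proved earlier in this section (itself a consequence of Lemma~\ref{sec:constr-psc-metr} and Theorem~\ref{sec:constr-invar-psc-1}) gives \(\alpha(\tilde M)=0\), and Stolz's theorem yields a psc metric on \(\tilde M\).

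The main obstacle is the final descent from \(\tilde M\) to \(M\): the metrics produced by Gromov--Lawson or Stolz on \(\tilde M\) are not a priori \(\pi_1(M)\)-invariant, and averaging Riemannian metrics does not preserve positivity of the scalar curvature. I expect this to be handled by the simplification attributed to Hanke in the introduction, most naturally via a direct geometric construction on \(M\) itself that avoids the universal cover entirely. Concretely: the \(S^1\)-symmetry forces the invariant psc metric on \(N\) into warped-product form on a tubular neighborhood of its codimension-two fixed component; one excises this neighborhood and glues back the handle \(S^1\times D^{n-1}\) whose attachment originally converted \(M\) into \(N\), equipped with a product psc metric (available because \(D^{n-1}\) with \(n-1\geq 4\) carries ample positive scalar curvature and the \(S^1\)-factor can be made arbitrarily small). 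Matching boundary metrics along the collar \(S^1\times S^{n-2}\) and interpolating yields a non-invariant metric of positive scalar curvature on \(M\).
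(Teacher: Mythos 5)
Your first step coincides with the paper's: surger a principal orbit as in Lemma~\ref{sec:constr-psc-metr} and invoke Theorem~\ref{sec:constr-invar-psc-1} to get an invariant psc metric on the resulting manifold \(N\). But from there your argument has a genuine gap, and you have in fact located it yourself: the passage through the universal cover does not close. Even when it applies (if \(\pi_1(M)\) is infinite, \(\tilde M\) is non-compact and neither Gromov--Lawson's classification nor Stolz's theorem nor the \(\alpha\)-invariant is available at all), a psc metric on \(\tilde M\) gives no psc metric on \(M\); this descent problem is precisely the content of the whole non-simply-connected theory. Your proposed repair does not fix this. Undoing the orbit surgery means removing a neighborhood of the codimension-two fixed component of \(N\) (a \(D^2\)-bundle over \(S^{n-2}\), essentially \(D^2\times S^{n-2}\)) and gluing back \(S^1\times D^{n-1}\): this is a surgery of codimension \emph{two} on \(N\), exactly the case excluded from the Gromov--Lawson/Schoen--Yau surgery theorem, and ``matching boundary metrics along \(S^1\times S^{n-2}\) and interpolating'' cannot be carried out while keeping the scalar curvature positive in general. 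A decisive sanity check: your gluing step makes no use of the hypothesis that \(\tilde M\) is non-spin or that the lifted action is of odd type, so it would equally ``prove'' psc for the even-type \(S^1\)-actions of Bredon, Schultz and Joseph on exotic spheres with \(\alpha\neq 0\), which the paper explicitly notes do not admit psc metrics. Any correct argument must use that hypothesis somewhere after the construction of \(N\).

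The paper's route is different and avoids both problems. Since the surgered circle is null-homotopic, \(N\) is a connected sum of \(M\) with an \(S^{n-2}\)-bundle over \(S^2\), trivial or non-trivial according to the choice of framing; the hypothesis enters exactly here. If \(\tilde M\) is not spin, the two outcomes are diffeomorphic (one moves the circle around an embedded \(S^2\) with non-trivial normal bundle to change the framing); if \(\tilde M\) is spin and the lifted action is odd, the argument of Lemma~\ref{sec:constr-psc-metr} shows the universal cover of \(N\) is spin, forcing the trivial bundle. Either way \(N\cong M\#(S^2\times S^{n-2})\). One then performs a \emph{non-equivariant} surgery on the embedded \(S^2\)-factor, which has trivial normal bundle and codimension \(n-2\geq 3\); this recovers \(M\) and, by the surgery theorem, transports the psc metric from \(N\) to a (non-invariant) psc metric on \(M\). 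In short: do not try to reverse the codimension-two surgery or to descend from \(\tilde M\); instead identify \(N\) as a connected sum and undo it by a different, high-codimension surgery.
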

\begin{proof}
  At first do an equivariant surgery on a principal orbit of the \(S^1\)-action on \(M\) as in the proof of Lemma~\ref{sec:constr-psc-metr}.
  By Theorem~\ref{sec:constr-invar-psc-1}, the resulting manifold has an invariant metric of positive scalar curvature.
  Moreover, since the principal orbits are null-homotopic, it follows from the assumption on the lifted action to the universal cover, that \(N\) is diffeomorphic to the connected sum of \(M\) and \(S^2\times S^{n-2}\).

  Indeed, the result of a surgery on a null-homotopic circle \(S\) in \(M\) is diffeomorphic to the connected sum of \(M\) and \(S^2\times S^{n-2}\) or the connected sum of \(M\) and the non-trivial \(S^{n-2}\)-bundle over \(S^2\), depending on the choice of the framing of the circle, for which there are two choices.
  If the universal cover of \(M\) is not spin, then these two manifolds are diffeomorphic.
  To see this take an embedding of \(S^2\hookrightarrow M\) with non-trivial normal bundle.
  We may assume that \(S\) is contained in \(S^2\) as a small circle around the north pole.
  We fix a framing of \(S\) in \(M\).
  We may move \(S\) along the meridians of \(S^2\) to the south pole and then rotate \(S^2\) so that the north and south pole are interchanged.
  During the way of \(S\) in \(S^2\) its framing changes, so that the result of a surgery on \(S\) is independent of the choice of the framing.

  If the universal cover of \(M\) is spin, then the assumption on the lifted action and the argument from the proof of Lemma~\ref{sec:constr-psc-metr} imply that the universal cover of the result of the surgery is also spin.
  Therefore \(N\) is the connected sum of \(M\) and \(S^2\times S^{n-2}\).
  Hence, by surgery on the \(S^2\)-factor we recover \(M\).
  Since this surgery is of codimension at least three, it follows that \(M\) admits a metric of positive scalar curvature.
\end{proof}

We give an example which shows that the assumptions of the above theorem are not sufficient for the existence of \(S^1\)-invariant metrics.
Let \(X=(\C P^2\times \C P^1)\#T^6\) and M be the principal \(S^1\)-bundle over \(X\) with first Chern class a generator of \(H^2(\C P^2;\mathbb{Z})\subset H^2(X;\mathbb{Z})\).
Then it follows from an inspection of a Mayer-Vietoris-sequence that \(M\) is a Spin-manifold.
Moreover, the \(S^1\)-action on M is of odd type because \(X\) is not a Spin-manifold.
The \(S^1\)-orbits in \(M\) are null-homotopic because \(M|_{\C P^2\subset X}=S^5\) is simply connected.
But there is a degree-one map \(X\rightarrow T^6\).
Hence, \(X\) does not admit a metric of positive scalar curvature by
\cite[Corollary 2]{MR535700}.
Therefore, by Theorem~\ref{sec:pre-1}, \(M\) does not admit a \(S^1\)-invariant metric of positive scalar curvature.

As a consequence of the proof of Theorem~\ref{sec:constr-metr-posit-1} we get the following corollary.

\begin{cor}
  Let \(M\) be a manifold of dimension \(n\geq 4\) with effective \(S^1\)-action and null-homotopic principal orbits. Denote by \(N\) the non-trivial \(S^{n-2}\)-bundle over \(S^2\). Then the following holds:
  \begin{enumerate}
  \item One of the manifolds \(M\# (S^2\times S^{n-2})\) or \(M\# N\) admits an \(S^1\)-action with an invariant metric of positive scalar curvature.
  \item \(M\# N \# (S^2\times S^{n-2})\) admits an \(S^1\)-action with an invariant metric of positive scalar curvature.
  \end{enumerate}
\end{cor}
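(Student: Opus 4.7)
The plan is to iterate the equivariant surgery construction used in the proof of Theorem~\ref{sec:constr-metr-posit-1}. For part~(1), perform an equivariant surgery on a null-homotopic principal orbit of \(M\). The surgery attaches a copy of \(D^2\times S^{n-2}\) on which \(S^1\) rotates the \(D^2\)-factor, so the resulting manifold \(M_1\) contains the codimension-two \(S^1\)-fixed sphere \(\{0\}\times S^{n-2}\); Theorem~\ref{sec:constr-invar-psc-1} then furnishes an invariant metric of positive scalar curvature on \(M_1\). As recalled in the proof of Theorem~\ref{sec:constr-metr-posit-1}, \(M_1\) is non-equivariantly diffeomorphic to either \(M\#(S^2\times S^{n-2})\) or \(M\#N\) depending on the framing of the surgered circle, which proves~(1).

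For part~(2), iterate: \(M_1\) is again a closed effective \(S^1\)-manifold of dimension \(n\geq 4\) with null-homotopic principal orbits, so a second equivariant surgery on a principal orbit of \(M_1\) produces \(M_2 = M_1\#B = M\#A\#B\) with \(A,B\in\{S^2\times S^{n-2},N\}\), and \(M_2\) admits an invariant PSC metric by Theorem~\ref{sec:constr-invar-psc-1}. It remains to identify \(M_2\) with \(M\#N\#(S^2\times S^{n-2})\). If \(\{A,B\}=\{S^2\times S^{n-2},N\}\) this is immediate. If either \(A\) or \(B\) equals \(N\), one applies the observation from the proof of Theorem~\ref{sec:constr-metr-posit-1}: the manifold \(N\) contains an embedded \(2\)-sphere with non-trivial normal bundle, obtained as a nowhere-zero section of the rank-\((n-1)\) vector bundle \(E\) with \(N=S(E)\) (such a section exists because the primary obstruction sits in \(H^{n-1}(S^2;\pi_{n-2}(S^{n-2}))=0\) for \(n\geq 4\), and the complementary subbundle is non-trivial); the presence of this sphere renders the remaining surgery independent of its framing, yielding \(M\#N\#N\cong M\#N\#(S^2\times S^{n-2})\).

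The main obstacle is the remaining case \(A=B=S^2\times S^{n-2}\), since \(M\#(S^2\times S^{n-2})^{\#2}\) is in general not diffeomorphic to \(M\#N\#(S^2\times S^{n-2})\) (for instance when \(M\) is simply connected and spin, so that the former is spin while the latter is not). To force an \(N\)-summand in this case, one performs the second equivariant surgery on a principal orbit of \(M_1\) that links the codimension-two fixed component \(F\cong S^{n-2}\) created by the first surgery; the equivariant framing along such a linked orbit differs from the unlinked one by a twist encoding the \(S^1\)-rotation on the normal bundle of \(F\), which converts the second summand from \(S^2\times S^{n-2}\) into \(N\). Justifying this framing-change step---equivalently, identifying the correct equivariant gluing map in the second surgery---is the principal subtlety of the argument.
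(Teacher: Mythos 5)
Your part (1) is exactly the paper's argument and is fine. Part (2), however, has a genuine gap, and you have correctly located it yourself: in the iterated-surgery approach you cannot control whether the second surgery contributes an \(S^2\times S^{n-2}\)- or an \(N\)-summand, and the case \(M\#(S^2\times S^{n-2})\#(S^2\times S^{n-2})\) is not the desired manifold. The proposed repair --- performing the second surgery on a principal orbit ``linking'' the new fixed sphere so as to twist the framing --- cannot work, because there is no freedom to exploit. All principal orbits of the connected \(S^1\)-manifold \(M_1\) lie in the connected principal stratum and are therefore equivariantly isotopic, so ``linked'' and ``unlinked'' orbits give equivariantly diffeomorphic surgery results; and the equivariant normal bundle of a free orbit is \(S^1\times\R^{n-1}\) with trivial action on the fibre, so an equivariant change of framing is a map \(S^1\rightarrow O(n-1)\) constant by equivariance, i.e.\ the equivariant framing is essentially unique. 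Hence the outcome of the second equivariant surgery is determined by \(M_1\), not a choice; for instance, if \(M\) is simply connected and spin with the action of odd type, the argument of Lemma~\ref{sec:constr-psc-metr} shows both surgery traces are spin, so one is forced into \(A=B=S^2\times S^{n-2}\), which is precisely the bad case. (Your treatment of the case where an \(N\)-summand is already present --- a section of the underlying rank-\((n-1)\) bundle giving a \(2\)-sphere with non-trivial normal bundle, hence framing-independence --- is sound in spirit, though the relevant obstruction group is \(H^2(S^2;\pi_1(S^{n-2}))\), not the top one; but this does not rescue the main case.)

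The paper avoids the second surgery altogether. Both \(N\) and \(S^2\times S^{n-2}\) carry circle actions with a codimension-two fixed component: the structure group of \(N\rightarrow S^2\) reduces to \(S^1\), and by Hattori--Yoshida the rotation action on \(S^2\) lifts to \(N\) so as to be trivial on the fibre over one fixed point (similarly for \(S^2\times S^{n-2}\)). One then forms the \emph{equivariant connected sum} of \(M'=M_1\) with whichever of \(N\) or \(S^2\times S^{n-2}\) is missing, gluing along the codimension-two fixed components; the result is \(M\#N\#(S^2\times S^{n-2})\) with an \(S^1\)-action having a codimension-two fixed component, and Theorem~\ref{sec:constr-invar-psc-1} supplies the invariant metric. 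In other words, the needed extra summand is added by an equivariant connected sum with a prescribed model action, which is exactly the control your second surgery cannot provide; to complete your proof you would need this (or some equivalent device), since the framing-change step you flag as the principal subtlety is not merely unjustified but unavailable.
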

\begin{proof}
  Let \(M'\) be the result of an equivariant surgery on a principal orbit in \(M\).
  Then the \(S^1\)-action on \(M'\) has a fixed point component of codimension two.
  Therefore \(M'\) has an invariant metric of positive scalar curvature.
  Since the principal orbits in \(M\) are null-homotopic, \(M'\) is diffeomorphic to \(M\# (S^2\times S^{n-2})\) or \(M\# N\). Therefore the first claim follows.

  The structure group of \(N\rightarrow S^2\) reduces to \(S^1\).
  Let \(S^1\) act on \(S^2\) by rotation.
  Then, by results of Hattori and Yoshida \cite{MR0461538}, the \(S^1\)-action on \(S^2\) lifts to an action on \(N\) such that the action on the fiber over one of the fixed points in \(S^2\) is trivial.
  Therefore the lifted action on \(N\) has a fixed point component of codimension two.
  A similar statement holds for an \(S^1\)-action on \(S^2\times S^{n-2}\).

  Then we can form the connected sum of \(M'\) and \(N\) (and also of \(M'\) and \(S^2\times S^{n-2}\)) in an equivariant way.
  Therefore \(M'\# N\) and \(M'\# (S^2\times S^{n-2})\) admit an \(S^1\)-action with an invariant metrics of positive scalar curvature. This proves the second claim.
\end{proof}

Since the principal orbits of an \(S^1\)-action are always null-homotopic if the \(S^1\)-action has fixed points,
we get the following corollary.

\begin{cor}
\label{sec:constr-non-invar-2}
  Let \(M\) be a closed connected manifold of dimension \(n\geq 5\) with non-zero Euler-characteristic such that the universal cover of \(M\) is not spin.
  If \(M\) does not admit a metric of positive scalar curvature, then there is no non-trivial \(S^1\)-action on \(M\).
\end{cor}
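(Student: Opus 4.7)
The plan is to prove the contrapositive: assuming a non-trivial \(S^1\)-action on \(M\), produce a metric of positive scalar curvature on \(M\) by invoking Theorem~\ref{sec:constr-metr-posit-1}, and then conclude by contradiction with the hypothesis.

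First I would reduce to the case of an effective action. If the kernel \(K\) of the given \(S^1\)-action is all of \(S^1\) the action is trivial, contrary to assumption; hence \(K\) is a proper closed (thus finite) subgroup of \(S^1\), and the induced action of \(S^1/K\cong S^1\) on \(M\) is effective and has exactly the same orbits as the original one. After this reduction I may assume the \(S^1\)-action on \(M\) is effective.

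Next I would verify that the hypotheses of Theorem~\ref{sec:constr-metr-posit-1} are met. The dimension assumption \(n\geq 5\) is given. Because \(\chi(M)\neq 0\), the classical equality \(\chi(M^{S^1})=\chi(M)\) forces \(M^{S^1}\neq\emptyset\); this is exactly the situation noted by the author immediately before the corollary, in which the principal orbits of the \(S^1\)-action are automatically null-homotopic (a principal orbit can be pushed into a slice neighborhood of a fixed point, where it bounds a disc). Since the universal cover \(\tilde M\) of \(M\) is assumed to be non-spin, the spin/odd-type hypothesis of Theorem~\ref{sec:constr-metr-posit-1} is vacuous and imposes nothing further.

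With all hypotheses verified, Theorem~\ref{sec:constr-metr-posit-1} yields a (non-invariant) metric of positive scalar curvature on \(M\), contradicting the assumption that \(M\) admits no such metric. Therefore no non-trivial \(S^1\)-action on \(M\) can exist. There is no real obstacle here: the work has been done in Theorem~\ref{sec:constr-metr-posit-1}, and this corollary is essentially a packaging of its hypotheses. The only small point to be careful about is the passage from ``non-trivial'' to ``effective'' and the invocation of the standard fact that a fixed point forces principal orbits to be null-homotopic; both are routine.
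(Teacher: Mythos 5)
Your proposal is correct and follows the paper's own argument: the non-zero Euler characteristic forces \(M^{S^1}\neq\emptyset\), hence null-homotopic principal orbits, and Theorem~\ref{sec:constr-metr-posit-1} (whose spin/odd-type hypothesis is vacuous since \(\tilde M\) is not spin) then yields a metric of positive scalar curvature, a contradiction. Your extra remark about passing from a non-trivial to an effective action by dividing out the finite kernel is a sensible tidying of a point the paper leaves implicit.
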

\begin{proof}
  Since \(\chi(M)\neq 0\) every \(S^1\)-action on \(M\) must have fixed points.
  Therefore it follows from Theorem~\ref{sec:constr-metr-posit-1} that there is no non-trivial \(S^1\)-action on \(M\).
\end{proof}

It follows from Corollary~\ref{sec:constr-non-invar-2}, that the manifold \(X\) from the example after Theorem~\ref{sec:constr-metr-posit-1} does not admit any non-trivial circle action.

It was an idea of Bernhard Hanke to combine Corollary~\ref{sec:constr-non-invar-2} with ideas of Schick to construct new obstructions to \(S^1\)-actions on manifolds with non-spin universal cover of dimension greater than four.
In the remainder of this section we describe what grew out of this idea.

For these manifolds, there is only one known obstruction to a metric of positive scalar curvature, namely the minimal hypersurface method of Schoen and Yau \cite{MR535700}.
Using this method the following theorem was proved by Joachim and Schick.

\begin{theorem}[{\cite{MR1778107}}]
\label{sec:constr-non-invar-1}
  Let \(G\) be a discrete group and \([M\rightarrow BG]\in \Omega_n^{SO}(BG)\) with  \(2\leq n\leq 8\) and \(M\) connected.
  If there are \(\alpha_1,\dots,\alpha_{n-2}\in H^1(BG;\mathbb{Z})\) such that
  \begin{equation*}
    \alpha_1\cap(\dots\cap(\alpha_{n-2}\cap[M])\dots)\neq 0\in\Omega_2^{SO}(BG),
  \end{equation*}
then \(M\) does not admit a metric of positive scalar curvature.
\end{theorem}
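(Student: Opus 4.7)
The plan is to prove the contrapositive by induction on $n$, cutting $M$ down one dimension at a time via the Schoen--Yau stable minimal hypersurface construction. The inductive hypothesis is the theorem itself for connected closed oriented manifolds of dimension $n-1$.

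For the base case $n=2$, suppose $M$ carries a metric of positive scalar curvature. By the Gauss--Bonnet theorem every component of $M$ is then a $2$-sphere, and since $\pi_1(S^2)=0$ every map $S^2\to BG$ is null-homotopic and extends over a $3$-ball. Hence $[M\to BG]=0$ in $\Omega_2^{SO}(BG)$, contradicting the assumption.

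For the inductive step $3\leq n\leq 8$, interpret $\alpha_{n-2}\in H^1(BG;\mathbb{Z})=\Hom(G,\mathbb{Z})$ and pull it back to $\bar\alpha\in H^1(M;\mathbb{Z})$, represented by a smooth map $f\colon M\to S^1$. Assuming $M$ admits a metric of positive scalar curvature, the Schoen--Yau minimal hypersurface method \cite{MR535700} produces a closed oriented stable minimal hypersurface $N\subset M$ that is Poincar\'e dual to $\bar\alpha$, together with a metric of positive scalar curvature on $N$ obtained by a conformal rescaling of the induced metric. By naturality of the cap product the bordism class $[N\to BG]$ equals $\alpha_{n-2}\cap[M\to BG]$ in $\Omega_{n-1}^{SO}(BG)$, so the remaining classes $\alpha_1,\dots,\alpha_{n-3}$ satisfy
\[\alpha_1\cap\bigl(\dots\cap(\alpha_{n-3}\cap[N\to BG])\dots\bigr)=\alpha_1\cap\bigl(\dots\cap(\alpha_{n-2}\cap[M\to BG])\dots\bigr)\neq 0.\]
Since the bordism class is additive over components, a connected component $N_0\subset N$ still realizes a non-zero iterated cap product, and the inductive hypothesis applied to $N_0$ contradicts the existence of positive scalar curvature on $N_0$.

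The main obstacle is the Schoen--Yau hypersurface step in the top dimension $n=8$. For $n\leq 7$, codimension-one stable minimal hypersurfaces are smooth by Federer's regularity theory, but for $n=8$ the minimizer of the area functional may acquire isolated singularities which must be handled by a perturbation or smoothing argument; this is precisely where the upper bound $n\leq 8$ originates. Modulo this analytic input the rest of the argument is a bordism-theoretic induction relying only on the naturality of the cap product and the classical geometric realization of Poincar\'e duals of $H^1$-classes as preimages of regular values of circle-valued maps.
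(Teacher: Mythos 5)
The paper itself contains no proof of this statement: it is imported verbatim from Joachim--Schick \cite{MR1778107}, and the paper explicitly records that it was proved there by the Schoen--Yau minimal hypersurface method --- exactly the route you sketch. So your approach coincides with the cited proof, and for \(2\leq n\leq 7\) your induction is essentially right, with two points you should tighten. First, the identification \([N\rightarrow BG]=\alpha_{n-2}\cap[M\rightarrow BG]\) is not literally ``naturality of the cap product'': the geometric-measure-theory minimizer only comes with the information that it represents the Poincar\'e dual homology class of \(\bar\alpha\), whereas the cap product in the paper is defined via a regular preimage \(\psi^{-1}(x)\). You need the standard Pontryagin--Thom argument for \(S^1\): any closed embedded oriented hypersurface dual to \(\bar\alpha\) is a regular preimage of some circle-valued map in the class \(\bar\alpha\), and regular preimages of homotopic maps are cobordant inside \(M\times[0,1]\), hence bordant over \(BG\). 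Second, at the bottom of the descent (\(n=3\), so \(\dim N=2\)) the conformal rescaling you invoke degenerates (the exponent \(2/(n-3)\) makes no sense); the correct statement is that the stability inequality forces each component of the minimal surface to have positive total Gauss curvature, hence to be a sphere, which then maps null-homotopically to the aspherical space \(BG\), so its class vanishes --- consistent with your base case but obtained differently.

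The genuine gap is \(n=8\), which the theorem explicitly includes and which you leave ``modulo this analytic input.'' For an \(8\)-dimensional ambient manifold the area minimizer in the dual homology class may have isolated singular points, and there is no soft ``smoothing argument'': the proof in \cite{MR1778107} fills this by invoking N.~Smale's generic regularity theorem, using that positive scalar curvature is an open condition on metrics, so one may perturb the given psc metric to a nearby psc metric for which the minimizing hypersurface dual to \(\bar\alpha\) is smooth, and then run the descent as before. Without naming and using this (or an equivalent substitute), your argument proves the statement only for \(2\leq n\leq 7\).
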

Here, the map
\begin{equation*}
  \cap:H^1(BG;\mathbb{Z})\times \Omega^{SO}_n(BG)\rightarrow \Omega^{SO}_{n-1}(BG),\quad (\alpha,[M])\mapsto \alpha\cap[M]
\end{equation*}
is defined as follows. Represent an element \(\alpha\in H^1(BG;\mathbb{Z})\) by a map \(f:BG\rightarrow S^1\) and let \([\phi:M\rightarrow BG]\in \Omega^{SO}_n(BG)\).
Let \(\psi:M\rightarrow S^1\) be a differentiable map homotopic to \(f\circ \phi\) and \(x \in S^1\) a regular value of \(\psi\).
Then \(\alpha\cap [M]\) is represented by the restriction of \(\phi\) to the hypersurface \(\psi^{-1}(x)\subset M\).

So, by combining Theorem~\ref{sec:constr-metr-posit-1} and Theorem~\ref{sec:constr-non-invar-1}, one gets the following Theorem \ref{sec:constr-non-invar} for certain manifolds of dimension greater than four and smaller than nine.
But there is also a direct proof for this theorem which does not use the scalar curvature of metrics on the manifolds involved.
We give this proof below.

\begin{theorem}
\label{sec:constr-non-invar}
  Let \(G\) be a discrete group and \([\phi: M\rightarrow BG]\in \Omega_n^{SO}(BG)\) with  \(n\geq 2\) and \(M\) connected.
  If there are \(\alpha_1,\dots,\alpha_{n-2}\in H^1(BG;\mathbb{Z})\) such that
  \begin{equation*}
    \alpha_1\cap(\dots\cap(\alpha_{n-2}\cap[M])\dots)\neq 0\in\Omega_2^{SO}(BG),
  \end{equation*}
then \(M\) does not admit an effective \(S^1\)-action with null-homotopic principal orbits.
\end{theorem}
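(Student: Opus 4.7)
My plan is to prove the contrapositive by induction on $n$. In order for the induction to close, I would strengthen the hypothesis slightly: rather than requiring principal orbits to be null-homotopic in $M$, I only require that $\phi$ carries each principal orbit to a null-homotopic loop in $BG$ (equivalently, that the orbit class in $\pi_1(M)$ lies in $\ker\phi_*$). The original hypothesis clearly implies this, and, unlike it, this weaker condition is inherited by the invariant codimension-one submanifolds that the cap products produce.

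The base case $n=2$ follows from the classification of effective $S^1$-actions on closed oriented surfaces: $M$ must be $S^2$ or $T^2$. For $S^2$, any map into the aspherical space $BG=K(G,1)$ is null-homotopic, so $[\phi]=0$. For $T^2$, the orbit class is a $\mathbb{Z}$-summand of $\pi_1(T^2)=\mathbb{Z}^2$, and since it lies in $\ker\phi_*$, the image of $\phi_*$ is cyclic, so $\phi$ factors through $BC$ for some cyclic group $C$; using $\Omega_2^{SO}(BG)\cong H_2(BG;\mathbb{Z})$ together with $H_2(BC)=0$ gives $[\phi]=0$.

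For the inductive step, represent $\alpha_1$ by $f_1:BG\to S^1=K(\mathbb{Z},1)$. The key technical step is to replace $f_1\circ\phi$ by a homotopic $S^1$-invariant map $\psi:M\to S^1$. I would pass to the regular cover $M'\to M$ classified by $\phi_*:\pi_1(M)\twoheadrightarrow\phi_*(\pi_1(M))$. The orbit-class hypothesis forces the $S^1$-action to lift to $M'$, and it commutes with the deck action (a continuous family of deck transformations in a discrete group is constant). Since $M'\to BG$ is null-homotopic, $f_1\circ\phi$ lifts to $\tilde\psi:M'\to\mathbb{R}$, which I average to $\bar\psi(x)=\int_{S^1}\tilde\psi(tx)\,dt$. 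The result is $S^1$-invariant and shares the deck-equivariance law of $\tilde\psi$, so it descends, modulo $\mathbb{Z}$, to an $S^1$-invariant map $\psi:M\to S^1$ homotopic to $f_1\circ\phi$. Choosing a regular value $x$, the preimage $V=\psi^{-1}(x)$ is an oriented $S^1$-invariant closed codimension-one submanifold with $[\phi|_V]=\alpha_1\cap[\phi]\in\Omega_{n-1}^{SO}(BG)$.

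By additivity of cap products I can pick a connected component $V_0\subset V$ with $\alpha_2\cap\cdots\cap\alpha_{n-2}\cap[\phi|_{V_0}]$ still non-zero. The $S^1$-action on $V_0$ is effective, since each $M^H$ with $H\subset S^1$ non-trivial has even codimension in $M$ (normal representations are complex), and so cannot contain the codimension-one submanifold $V_0$. The principal orbits of $V_0$ are principal orbits of $M$ because isotropy is intrinsic, hence are sent by $\phi|_{V_0}$ to null-homotopic loops in $BG$. Thus $V_0$ satisfies the strengthened inductive hypothesis in dimension $n-1$ with the remaining $n-3$ classes $\alpha_2,\dots,\alpha_{n-2}$, yielding the desired contradiction. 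The most delicate part of the argument is the averaging construction of $\psi$; this is where the null-homotopy hypothesis does its work, and the need to keep it available on the inductive hypersurface is precisely what forces the slight strengthening of the hypothesis described at the outset.
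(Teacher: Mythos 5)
Your argument is correct, and although it shares the paper's geometric skeleton --- cut \(M\) along \(S^1\)-invariant hypersurfaces dual to the classes \(f_i\circ\phi\) and finish with the classification of effective \(S^1\)-actions on closed orientable surfaces --- the implementation is genuinely different at the key step. The paper makes the map \(\psi:M\rightarrow S^1\) invariant by quoting Browder--Hsiang (\(\pi_1(M/S^1)=\pi_1(M)/H\) with \(H\) generated by elements of finite order), so that every map to \(S^1\) factors up to homotopy through \(M/S^1\); it then iterates down to an invariant surface with trivialized normal bundle and kills the sphere case by \(\pi_2(BG)=0\) and the torus case by extending over a solid torus. You instead pass to the cover associated to \(\kernel\phi_*\), lift the circle action (precisely where the orbit-class hypothesis enters), and average the real-valued lift; this is self-contained, avoids the orbit-space citation, and, since it only needs the orbit class to die in \(G\) rather than in \(\pi_1(M)\), your induction with the weakened hypothesis proves a slightly stronger statement --- the same weakening that the paper's torus endgame implicitly exploits. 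Your base case replaces the solid-torus extension by \(H_2\) of a cyclic group being zero; both are fine. Two small points to tighten: first, your reason that the action on \(V_0\) is effective (``normal representations are complex'') is not quite right for the subgroups \(\mathbb{Z}_{2m}\subset S^1\), whose fixed sets can have odd codimension in non-orientable manifolds (core circle of the M\"obius band); the correct argument is that an element of the connected group \(S^1\) preserves the orientation of \(M\) and acts trivially on \(TV_0\), so it cannot act by \(-1\) on the one-dimensional normal bundle, ruling out the codimension-one case for oriented \(M\). Second, the hypothesis nests the caps with \(\alpha_{n-2}\) innermost while you peel off \(\alpha_1\) first; either peel off the innermost class (the induction is identical) or remark that iterated caps with degree-one classes commute up to sign, so non-vanishing is unaffected.
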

\begin{proof}
  Assume that there is such an action on \(M\).
  At first note that the map \(\psi\) from the above construction can be assumed to be \(S^1\)-equivariant with \(S^1\) acting trivially on \(S^1\).
  This is because, by \cite[Theorem 1.11]{MR670746},
\(\pi_1(M/S^1)=\pi_1(M)/H\) where \(H\), is generated by elements of finite order.
Therefore up to homotopy every map \(M\rightarrow S^1\) factors through \(M/S^1\).
Note also, that if \(N\subset M\) is an invariant submanifold, then the restriction of \(f\circ \phi\) to \(N\) factors up to homotopy through \(M/S^1\) and therefore also through \(N/S^1\).

Hence, by applying the construction \(\cap\) described above several times, we get an orientable invariant two-dimensional submanifold \(N'\) of \(M\) with equivariantly trivial normal bundle.
Therefore the \(S^1\)-action on \(N'\) is effective.
From the classification of  orientable \(S^1\)-manifolds with one-dimensional orbit space it follows that \(N'\) is equivariantly diffeomorphic to a sphere with \(S^1\) acting by rotation or to a torus \(S^1\times S^1\) with \(S^1\) acting by multiplication on the first factor.
In the first case \([N']=0\in\Omega^{SO}_2(BG)\) because \(\pi_2(BG)=0\).
In the second case the map \(N'=S^1\times S^1\rightarrow BG\) extends to a map \(D^2\times S^1\rightarrow BG\) because the principal orbits in \(M\) are null-homotopic.
Hence, \(N'\) is a boundary.
Therefore the statement follows.
\end{proof}

It follows from the above theorem that Schick's five-dimensional counterexample \cite{MR1632971} to the Gromov--Lawson--Rosenberg Conjecture does not admit any \(S^1\)-action with fixed points.
If one does Schick's construction in dimension six, then one gets a manifold with non-trivial Euler characteristic.
Therefore this manifold does not admit any \(S^1\)-action.

\section{Semi-free circle actions}
\label{sec:semi-free}

In this section we discuss the question when a semi-free \(S^1\)-manifold without fixed point components of codimension less than four admits an invariant metric of positive scalar curvature.
For this discussion we need some notations and results of Stolz \cite{stolz:_concor}.

A supergroup \(\gamma\) is a triple \((\pi,w,\hat{\pi})\),  where \(\pi\) is a group, \(\hat{\pi}\rightarrow \pi\) is an extension of \(\pi\) such that \(\kernel (\hat{\pi}\rightarrow \pi)\) has order less than three and \(w:\pi\rightarrow \mathbb{Z}_2\) is a homomorphism.
We call a supergroup discrete if \(\pi\) is a discrete group.

For a discrete supergroup \(\gamma\) one defines a Lie group \(G(n,\gamma)\) as the even part of the superproduct \(\text{Pin}(n)\hat{\times}\gamma\).
There is a homomorphism \(G(n,\gamma)\rightarrow O(n)\), which is surjective if \(w\neq 0\) and has image \(SO(n)\) if \(w=0\).
In both cases \(G(n,\gamma)\) is a covering of its image under this homomorphism.

A \(\gamma\)-structure on a vector bundle \(E\rightarrow X\) equipped with an inner product and with \(\dim E\geq 3\), is a reduction of structure group of \(E\) through the homomorphism  \(G(n,\gamma)\rightarrow O(n)\), i.e. a \(\gamma\)-structure is a principal \(G(n,\gamma)\)-bundle \(P_{G(n,\gamma)}(E)\) over \(X\) together with an isomorphism of principal \(O(n)\)-bundles \(\xi:P_{G(n,\gamma)}(E) \times_{G(n,\gamma)}O(n)\rightarrow P_{O(n)}(E)\), where \(P_{O(n)}(E)\) is the orthogonal frame bundle of \(E\).
If \(M\) is a manifold, then  a \(\gamma\)-structure on \(M\) is a \(\gamma\)-structure on its tangent bundle.
A manifold with a \(\gamma\)-structure is called \(\gamma\)-manifold.

It can be shown that there is a natural bijection between the \(\gamma\)-structures on \(E\) and \(E\oplus \R\).
Moreover, in the case that \(w=0\), a \(\gamma\)-structure induces an orientation on \(E\).

For each vector bundle \(E\rightarrow X\) over a connected space \(X\) there is a supergroup \(\gamma(E)\), which encodes the information contained in the fundamental group of \(X\) and the first two Stiefel-Whitney classes of \(E\).
It is defined as follows:
\begin{enumerate}
\item \(\pi=\pi_1(X)\),
\item \(w:\pi\rightarrow \mathbb{Z}_2\) is the orientation character of \(E\),
\item \(\hat{\pi}\rightarrow \pi\) is the extension of \(\pi \) induced by the projection map \(P_{O(n)}(E)/\langle r\rangle \rightarrow X\), where \(r\in O(n) \) is the reflection in the hyperplane perpendicular to \(e_1=(1,0,\dots,0)\in \R^n\).
\end{enumerate}

A vector bundle together with a choice of a base point for its orthogonal frame bundle is called a pointed vector bundle.
Stolz shows that for every pointed vector bundle \(E\) there is a canonical \(\gamma(E)\)-structure on \(E\).
We denote this \(\gamma(E)\)-structure of \(E\) by \(P_{\gamma(E)}(E)\).

Now let \(X\) be a \(S^1\)-space. Then we call a \(\gamma\)-structure
on an \(S^1\)-vector bundle \(E\rightarrow X\) equivariant if the
\(S^1\)-action on \(P_{O(n)}(E)\) lifts to an \(S^1\)-action on
\(P_{G(n,\gamma)}(E)\) in such a way that the \(S^1\)-action commutes
with the \(G(n,\gamma)\)-action.
If the \(S^1\)-action on \(X\) is free, then there is a natural isomorphism \(E\cong p^*(E/S^1)\), where \(p:X\rightarrow X/S^1\) is the orbit map.
In this case we have the following lemma.

\begin{Lemma}
\label{sec:semi-free-circle-5}
  Let \(E\rightarrow X\) be a \(S^1\)-vector bundle over a free \(S^1\)-space with \(\dim E\geq 3\) and \(\gamma\) a discrete supergroup. Then the following two statements hold:
  \begin{enumerate}
  \item\label{item:4} There is a bijection between  the \(\gamma\)-structures on \(E/S^1\) and the \(S^1\)-equivariant \(\gamma\)-structures on \(E\), induced by pullback.
  \item\label{item:2}
    Equip \(P_{O(n)}(E)\) and  \(P_{O(n)}(E/S^1)=P_{O(n)}(E)/S^1\) with basepoints \(x\) and \(S^1x\).
    Then there are homomorphisms
    \begin{align*}
       \gamma(E)&\rightarrow \gamma(E/S^1)& &\text{and}& G(n,\gamma(E))&\rightarrow G(n,\gamma(E/S^1))
    \end{align*}
    and an isomorphism of \(\gamma(E/S^1)\)-structures
    \begin{equation*}
P_{\gamma(E)}(E)\times_{G(n,\gamma(E))} G(n,\gamma(E/S^1)) \rightarrow p^*P_{\gamma(E/S^1)}(E/S^1).      
    \end{equation*}
  \end{enumerate}
\end{Lemma}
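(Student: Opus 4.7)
The plan for (\ref{item:4}) is to exhibit mutually inverse pullback and quotient operations on $\gamma$-structures. Given a $\gamma$-structure $Q \to X/S^1$, the identification $P_{O(n)}(E) = p^* P_{O(n)}(E/S^1)$ coming from $E \cong p^*(E/S^1)$ turns $p^* Q$ into a principal $G(n,\gamma)$-bundle on $X$ carrying a canonical $S^1$-action on the first factor; this action commutes with the $G(n,\gamma)$-action and covers the $S^1$-action on $P_{O(n)}(E)$, yielding an equivariant $\gamma$-structure on $E$. Conversely, if $P \to X$ is an equivariant $\gamma$-structure, then the $S^1$-action on $P$ is free: if $s\cdot y = y$ for some $s \in S^1$ and $y \in P$, then $s$ fixes the image of $y$ in $X$, forcing $s = 1$ by freeness of the action on $X$. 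Hence $P/S^1 \to X/S^1$ is a principal $G(n,\gamma)$-bundle, and the $S^1$-equivariant reduction map $P \to P_{O(n)}(E)$ descends to the structure map of a $\gamma$-structure on $E/S^1$. The natural isomorphisms $p^*(P/S^1) \cong P$ and $(p^*Q)/S^1 \cong Q$ make these operations mutually inverse.

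For (\ref{item:2}), the quotient map $P_{O(n)}(E) \to P_{O(n)}(E/S^1)$ is a principal $S^1$-bundle and is $O(n)$-equivariant, so passing to the quotient by $\langle r \rangle$ yields an $S^1$-bundle map $P_{O(n)}(E)/\langle r \rangle \to P_{O(n)}(E/S^1)/\langle r \rangle$ covering $X \to X/S^1$. Taking fundamental groups at the chosen basepoints $x$ and $S^1 x$ produces a commutative square
\[
\begin{array}{ccc}
\hat{\pi}(E) & \longrightarrow & \hat{\pi}(E/S^1) \\
\downarrow & & \downarrow \\
\pi_1(X) & \longrightarrow & \pi_1(X/S^1)
\end{array}
\]
which, together with the compatibility of orientation characters (since $E$ is the pullback of $E/S^1$), exhibits a supergroup homomorphism $\gamma(E) \to \gamma(E/S^1)$. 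Forming the superproduct with $\text{Pin}(n)$ and extracting the even part gives the Lie group homomorphism $G(n,\gamma(E)) \to G(n,\gamma(E/S^1))$.

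For the final isomorphism of $\gamma(E/S^1)$-structures on $E$, both $P_{\gamma(E)}(E) \times_{G(n,\gamma(E))} G(n,\gamma(E/S^1))$ and $p^* P_{\gamma(E/S^1)}(E/S^1)$ arise canonically from the pointed frame bundle $(P_{O(n)}(E), x)$: the former by extending the canonical $\gamma(E)$-structure along the supergroup homomorphism just built, and the latter by pullback along $p$ of Stolz's canonical $\gamma(E/S^1)$-structure on $E/S^1$, which by (\ref{item:4}) is itself an equivariant $\gamma(E/S^1)$-structure on $E$. The desired isomorphism then follows from the naturality of Stolz's canonical construction with respect to the pointed bundle map $(P_{O(n)}(E), x) \to (P_{O(n)}(E/S^1), S^1 x)$ and the induced supergroup map. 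The main technical obstacle is precisely this bookkeeping with basepoints: Stolz's construction is determined by a specific cover of $P_{O(n)}(-)/\langle r \rangle$ classifying $\hat{\pi}$, and one must verify that the cover of $P_{O(n)}(E/S^1)/\langle r \rangle$ classifying $\hat{\pi}(E/S^1)$ pulls back to the cover of $P_{O(n)}(E)/\langle r \rangle$ classifying $\hat{\pi}(E)$ — which is exactly what the commutative square above asserts.
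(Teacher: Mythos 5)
Your part (\ref{item:4}) and your construction of the homomorphisms \(\gamma(E)\rightarrow\gamma(E/S^1)\) and \(G(n,\gamma(E))\rightarrow G(n,\gamma(E/S^1))\) are correct and essentially the paper's own argument: the paper likewise passes between \(\gamma\)-structures via \(P\mapsto P/S^1\) and \(Q\mapsto p^*Q\) and observes these are mutually inverse, and it obtains the homomorphisms from the definitions and the chosen basepoints; your explicit check that the \(S^1\)-action on an equivariant structure is free is a harmless addition.

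The gap is in the last step of (\ref{item:2}). You reduce the isomorphism to the claim that the cover of \(P_{O(n)}(E/S^1)/\langle r\rangle\) classifying \(\hat{\pi}(E/S^1)\) pulls back to the cover of \(P_{O(n)}(E)/\langle r\rangle\) classifying \(\hat{\pi}(E)\), and you assert this is ``exactly what the commutative square asserts.'' It is not, and it is false in general: \(P_{O(n)}(E)/\langle r\rangle\rightarrow P_{O(n)}(E/S^1)/\langle r\rangle\) is the quotient by a free \(S^1\)-action, so \(\hat{\pi}(E)\rightarrow\hat{\pi}(E/S^1)\) is surjective with kernel the image of the orbit class, which need not vanish (already \(\pi_1(X)\rightarrow\pi_1(X/S^1)\) can fail to be injective). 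Pulling back the \(\hat{\pi}(E/S^1)\)-cover therefore yields the cover corresponding to that kernel, not the \(\hat{\pi}(E)\)-cover; your square only supplies a compatible homomorphism, not injectivity. Indeed, if your claim held one would get \(P_{\gamma(E)}(E)\cong p^*P_{\gamma(E/S^1)}(E/S^1)\) outright, and the balanced product with \(G(n,\gamma(E/S^1))\) in the statement would be superfluous; its presence signals that the two structures differ as covers. What must actually be produced is a \(G(n,\gamma(E))\)-equivariant map \(\phi:P_{\gamma(E)}(E)\rightarrow p^*P_{\gamma(E/S^1)}(E/S^1)\) compatible with the projections to \(P_{O(n)}(E)\); this covering map is in general not injective, and the stated isomorphism only appears after extending the structure group along \(G(n,\gamma(E))\rightarrow G(n,\gamma(E/S^1))\). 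The paper constructs \(\phi\) concretely from the path-space model in the proof of Stolz's Proposition 2.12: \(P_{\gamma(E)}(E)\) is the set of homotopy classes rel endpoints of paths \(a\) in \(P_{O(n)}(E)\) with \(a(0)=x\), the projection is \([a]\mapsto a(1)\), and \(\phi([a])=(\psi(a(1)),[\bar{p}\circ a])\), with equivariance read off from Stolz's description of the \(G(n,\gamma(E))\)-action. Your appeal to ``naturality of Stolz's construction'' can be made precise exactly along these lines, but as written it rests on the incorrect pullback claim, so the key step of (\ref{item:2}) is not yet justified.
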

\begin{proof}
  At first we prove (\ref{item:4}).
  If \(P_\gamma(E)\) is an equivariant \(\gamma\)-structure on \(E\), then the isomorphism \(P_{\gamma}(E)\times_{G(n,\gamma)}O(n)\rightarrow P_{O(n)}(E)\) induces an isomorphism
  \begin{equation*}
     P_{\gamma}(E)/S^1\times_{G(n,\gamma)}O(n)\rightarrow P_{O(n)}(E)/S^1=P_{O(n)}(E/S^1). 
  \end{equation*}
  Hence, \(P_{\gamma}(E)/S^1\) is a \(\gamma\)-structure for \(E/S^1\).

  Otherwise, if \(P_\gamma(E/S^1)\) is a \(\gamma\)-structure for \(E/S^1\), then the isomorphism 
  \begin{equation*}
    P_{\gamma}(E/S^1)\times_{G(n,\gamma)}O(n)\rightarrow P_{O(n)}(E/S^1),
  \end{equation*}
 induces an \(S^1\)-equivariant  isomorphism  \(p^*P_{\gamma}(E/S^1)\times_{G(n,\gamma)}O(n)\rightarrow p^*P_{O(n)}(E/S^1)\).
  Therefore \(p^*P_{\gamma}(E/S^1)\) is an equivariant \(\gamma\)-structure for \(p^*(E/S^1)\).

  Since, for an equivariant \(\gamma\)-structure \(P_\gamma(E)\), \(p^*(P_{\gamma}(E)/S^1)\) is naturally isomorphic to \(P_\gamma(E)\) and \(p^*(E/S^1)\) is naturally isomorphic to \(E\),
  these two operations are inverse to each other.
  Hence, (\ref{item:4}) is proved.

  Now we prove (\ref{item:2}).
  The existence of the homomorphisms \(\gamma(E)\rightarrow \gamma(E/S^1)\) and \(G(n,\gamma(E))\rightarrow G(n,\gamma(E/S^1))\) follows from the definitions of \(\gamma(E)\) and \(G(n,\gamma(E))\) and the choices of the basepoints.
  Therefore we only have to prove the existence of the isomorphism of the \(\gamma(E/S^1)\)-structures.
  
  Such an isomorphism exists if and only if there is a \(G(n,\gamma(E))\)-equivariant map \(\phi\) such that the following diagram commutes
  \begin{equation*}
    \xymatrix{
      P_{\gamma(E)}(E) \ar[r]\ar_{\phi}[d]&P_{\gamma(E)}(E)\times_{G(n,\gamma(E))}O(n)\ar^(.65){\cong}[r] & P_{O(n)}(E)\\
      p^*P_{\gamma(E/S^1)}(E/S^1)\ar[r]&p^*P_{\gamma(E)}(E/S^1)\times_{G(n,\gamma(E/S^1))}O(n)\ar^(.65){\cong}[r]& p^*P_{O(n)}(E/S^1)\ar_{\cong}[u]} 
  \end{equation*}

Now, by the proof of \cite[Proposition 2.12]{stolz:_concor}, \(P_{\gamma(E)}(E)\) can be identified with
\begin{equation*}
  \{a:[0,1]\rightarrow P_{O(n)}(E);\;a(0)=x\}/\sim.
\end{equation*}
Here two paths are identified if they are homotopic relative endpoints.
Moreover, the map \(P_{\gamma(E)}(E)\rightarrow P_{O(n)}(E)\) is given by \([a]\mapsto a(1)\).

Clearly, this map factors through
\begin{align*}
  \phi:P_{\gamma(E)}(E)&\rightarrow p^*P_{\gamma(E/S^1)}(E/S^1)=\{(y,e)\in X\times E/S^1;\; p(y)=\psi'(e)\}\\
    [a]&\mapsto (\psi(a(1)),[\bar{p}\circ a])
\end{align*}
Here \(\psi:P_{O(n)}(E)\rightarrow X\) and \(\psi':P_{O(n)}(E/S^1)\rightarrow X/S^1\) denote the bundle projections and 
\begin{equation*}
  \bar{p}:P_{O(n)}(E)\rightarrow P_{O(n)}(E)/S^1=P_{O(n)}(E/S^1)
\end{equation*}
 is the orbit map.
 It follows from the description of the \(G(n,\gamma(E))\)-action on \(P_{\gamma(E)}(E)\) given in the proof of \cite[Proposition 2.12]{stolz:_concor} that \(\phi\) is \(G(n,\gamma(E))\)-equivariant.
 Hence, the lemma is proved.
\end{proof}

\begin{constr}
\label{sec:semi-free-circle-7}
  If \(M\) is a connected free \(S^1\)-manifold, then there is an isomorphism \(TM\rightarrow p^*(T(M/S^1)\oplus \R)\).
Hence, by Lemma~\ref{sec:semi-free-circle-5}, one gets a equivariant \(\gamma(M/S^1)\)-structure on \(M\) from the canonical \(\gamma(M/S^1)\)-structure on \(M/S^1\).

We want to extend this construction to connected semi-free \(S^1\)-manifolds \(M\) with \(\codim M^{S^1}\geq 4\).
In this case we let \(\gamma(M/S^1)=\gamma((M-M^{S^1})/S^1)\).
By the assumption on the codimension of the fixed point set, the inclusion \(M-M^{S^1}\rightarrow M\) is three-connected.

In particular, \(\pi_i(M-M^{S^1})\cong \pi_i(M)\), for \(i=1,2\).
Hence, it follows from comparing the exact homotopy sequences for the
fibrations \(P_{O(n)}(T(M-M^{S^1}))/\langle r \rangle\rightarrow
M-M^{S^1}\) and
\(P_{O(n)}(TM)/\langle r \rangle\rightarrow M\) that the group
extensions
\(\pi_1(P_{O(n)}(T(M-M^{S^1}))/\langle r \rangle)\rightarrow
\pi_1(M-M^{S^1})\) and \(\pi_1(P_{O(n)}(TM)/\langle r
\rangle)\rightarrow \pi_1(M)\) are isomorphic.
Moreover, since \(M-M^{S^1}\subset M\) is open it follows that the
orientation character of \(M-M^{S^1}\) factors through the orientation
character of \(M\).
Therefore we have \(\gamma(M)=\gamma(M-M^{S^1})\).

By the case of a free action, we have a \(S^1\)-equivariant
\(\gamma(M/S^1)\)-structure on \(M-M^{S^1}\).
By part (\ref{item:2}) of  Lemma~\ref{sec:semi-free-circle-5}, this \(\gamma(M/S^1)\)-structure extends to an \(\gamma(M/S^1)\)-structure \(P_{\gamma(M/S^1)}\) on all of \(M\).
We will show that the \(S^1\)-action on \(M\) lifts into this \(\gamma(M/S^1)\)-structure on \(M\).
Since \(\gamma(M/S^1)\) is discrete, \(P_{\gamma(M/S^1)}\) is a covering of \(P_{O(n)}(TM)\) if \(w\neq 0\) or \(P_{SO(n)}(TM)\) if \(w=0\).
Therefore the \(S^1\)-action on \(P_{O(n)}(TM)\) or on \(P_{SO(n)}(TM)\) induces an \(\R\)-action on  \(P_{\gamma(M/S^1)}\).
By construction, the restriction of this \(\R\)-action to \(P_{\gamma(M/S^1)}|_{M-M^{S^1}}\) factors through \(S^1\).
Since \(M-M^{S^1}\) is dense in \(M\), this also holds for the \(\R\)-action on \(P_{\gamma(M/S^1)}\).
Hence, the \(S^1\)-action lifts into \(P_{\gamma(M/S^1)}\).

So on every connected semi-free \(S^1\)-manifold \(M\) with \(\codim M^{S^1}\geq 4\) there is a preferred equivariant \(\gamma(M/S^1)\)-structure.
\end{constr}

We need one more definition from \cite{stolz:_concor}.

\begin{definition}
  For \(n\geq 0\) and a discrete supergroup \(\gamma\) let \(R_n(\gamma)\) be the bordism group of \(n\)-dimensional \(\gamma\)-manifolds with positive scalar curvature metrics on their boundary, i.e. the objects of this bordism groups are pairs \((M,h)\) where \(M\) is an \(n\)-dimensional \(\gamma\)-manifold possibly with boundary and \(h\) a metric of positive scalar curvature on \(\partial M\). Two pairs \((M,h)\) and \((M',h')\) are identified if
  \begin{enumerate}
  \item there is a \(n\)-dimensional oriented manifold \(V\) with \(\partial V=-\partial M\amalg \partial M'\) and a positive scalar curvature metric on \(V\) which restricts to \(h\) and \(h'\) on the boundary respectively, and
  \item there is a \((n+1)\)-dimensional \(\gamma\)-manifold \(W\) with \(\partial W =M\cup_{\partial M} V\cup_{\partial M'}-M'\).
  \end{enumerate}
\end{definition}

In the above definition and the following we assume that all metrics on a manifold with boundary are product metrics near the boundary.

We also define equivariant bordism groups.

\begin{definition}
\label{sec:semi-free-circle}
  For \(n\geq 0\) and a discrete supergroup \(\gamma\) let \(\Omega_ {n,\geq 4}^{SF}(\gamma)\) be the bordism groups of closed \(n\)-dimensional  semi-free \(S^1\)-manifolds equipped with equivariant \(\gamma\)-structures and without fixed point components of codimension less than four.
Here we identify two manifolds \(M_1\) and \(M_2\) if there is a semi-free \(S^1\)-manifold with boundary, equivariant \(\gamma\)-structure and without fixed point components of codimension less than four such that \(\partial W=M_1\amalg -M_2\).
\end{definition}

Now we want to define a homomorphism \(\phi:\Omega^{SF}_{n,\geq 4}(\gamma)\rightarrow R_{n-1}(\gamma)\).

Let \(M\) be a \(S^1\)-manifold as in Definition \ref{sec:semi-free-circle}.
Then we can construct from \(M\) an \((n-1)\)-dimensional manifold \(N\) with boundary by removing from \(M\) an open tubular neighborhood of the fixed point set \(M^{S^1}\) and taking the quotient of the free \(S^1\)-action on this complement.
The boundary of this quotient is a disjoint union of \(\C P^k\)-bundles, \(k\geq 1\),  over the components of \(M^{S^1}\).
There is a natural \(\gamma\)-structure on this quotient because \((TM|_{M-M^{S^1}})/S^1=T((M-M^{S^1})/S^1)\oplus \R\).

We may choose a metric on \(M^{S^1}\) and a connection for the \(\C P^k\)-bundles over \(M^{S^1}\).
With these choices one can construct a connection metric \(h\) on \(\partial N\) with positive scalar curvature, such that the fibers of the bundle are up to scaling with a constant isometric to \(\C P^k\) with standard metric.
Note that the isotopy class of \(h\) does not depend on the above choices.
Therefore \((N,h)\) defines a well defined element of \(R_{n-1}(\gamma)\).

If \(W\) is an equivariant bordism without fixed point components of codimension less than four between \(M\) and another \(S^1\)-manifold \(M'\), then the quotient \(\tilde{W}\) of the free \(S^1\)-action on the complement of an open tubular neighborhood of the fixed point set in \(W\) gives a bordism between \(N\) and \(N'\).
The part of the boundary of \(\tilde{W}\) which does not belong to \(N\) or \(N'\) can be equipped with a metric of positive scalar curvature by the same argument as above.
Therefore the bordism class of \((N,h)\) depends only on the bordism class of \(M\).
Hence, we get the desired homomorphism \(\phi:\Omega^{SF}_{n,\geq 4}(\gamma)\rightarrow R_{n-1}(\gamma)\).

It has been shown by Stolz \cite{stolz:_concor} that a connected manifold \(M\) of dimension \(n\geq 5\) with boundary and a given metric \(h\) of positive scalar curvature on \(\partial M\) admits a metric of positive scalar curvature, which extends \(h\) and is a product metric near the boundary, if and only if \((M,h)\) equipped with the canonical \(\gamma(M)\)-structure represents zero in \(R_{n}(\gamma(M))\).

From this fact we get the following theorem.

\begin{theorem}
\label{sec:semi-free-circle-6}
  Let \(M\) be closed connected semi-free \(S^1\)-manifold \(M\) of dimension \(n\geq 6\) and \(\codim M^{S^1}\geq 4\).
  We equip \(M\) with the equivariant \(\gamma(M/S^1)\)-structure from Construction \ref{sec:semi-free-circle-7}.
  Then \(M\) admits an invariant metric of positive scalar curvature if and only if the class \([M]\in \Omega^{SF}_{n,\geq 4}(\gamma(M/S^1))\) can be represented by an \(S^1\)-manifold \(N\) which admits an invariant metric of positive scalar curvature.
\end{theorem}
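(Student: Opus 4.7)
The ``only if'' direction is immediate: $M$ itself then represents $[M]$. For the converse, I would use the homomorphism $\phi:\Omega^{SF}_{n,\geq 4}(\gamma(M/S^1))\to R_{n-1}(\gamma(M/S^1))$ constructed above together with Stolz's extension theorem to transfer the existence of an invariant psc metric from $N$ to $M$. The strategy has two main steps: first prove that $\phi([N])=0$ whenever $N$ admits an $S^1$-invariant psc metric, and then invert $\phi$ on the zero class to build the desired metric on $M$.

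For the first step, the invariant psc metric on $N$ descends by Theorem~\ref{sec:pre-1} to a psc metric on $(N-N^{S^1})/S^1$, and in particular to a psc metric $g^*$ on $N_0=(N-\text{open tube of }N^{S^1})/S^1$. By first deforming the invariant metric on $N$ near $N^{S^1}$ into a standard warped-product form on the normal disc bundle (a routine invariant Gromov--Lawson-type bending that preserves psc), one may arrange that $g^*|_{\partial N_0}$ equals the canonical connection metric $h_N$ used to define $\phi$. Then taking $V=N_0$ with metric $g^*$ and $W=N_0\times I$ in the definition of the equivalence relation on $R_{n-1}(\gamma(M/S^1))$ shows $\phi([N])=0$. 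Since $\phi$ is a homomorphism and $[M]=[N]$, this gives $\phi([M])=0$. Stolz's theorem then produces an extension $\bar g$ of the canonical boundary metric $h_M$ to a psc metric on $M_0=(M-\text{open tube of }M^{S^1})/S^1$ which is a product near the boundary, and Theorem~\ref{sec:pre-1} lifts $\bar g$ to an $S^1$-invariant psc metric on $M-\text{open tube of }M^{S^1}$.

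It remains to extend this invariant metric across the open tubular neighborhoods of the fixed point components. For each component $F_i\subset M^{S^1}$ of codimension $2k_i+2\geq 4$ with normal bundle $N(F_i)$ (a complex bundle via the $S^1$-action), I would equip $DN(F_i)$ with a Vilms-type $S^1$-invariant metric whose fibers are rotationally symmetric warped products $dr^2+f(r)^2 g_{S^{2k_i+1}}$ with $f(0)=0$, $f'(0)=1$, smoothly interpolating to a cylindrical end that matches the given metric on the boundary sphere bundle. After sufficient shrinking of the fibers, the positive fiber scalar curvature of the round $(2k_i+1)$-sphere dominates all other contributions. I expect this extension to be the main technical obstacle: it is precisely where the codimension $\geq 4$ hypothesis enters, since one needs $2k_i+1\geq 3$ for the round normal sphere to contribute strictly positive scalar curvature, and one must carefully carry out the bending of $f$ to match the metric already in place on the boundary sphere bundle.
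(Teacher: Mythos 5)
Your proposal is correct and takes essentially the same route as the paper's proof: reduce everything to showing \(\phi([M])=0\), obtain this vanishing from an invariant psc metric on a bordant representative by normalizing the metric near the fixed set and descending via Theorem~\ref{sec:pre-1}, then apply Stolz's theorem on the quotient, lift back with Theorem~\ref{sec:pre-1}, and glue in the normal disc bundles of the fixed components using the codimension \(\geq 4\) hypothesis. The one detail the paper makes explicit where you are vague is the matching step you flag: the lifted boundary metric is a connection metric whose fibers are round spheres with shrunk Hopf circles (Berger spheres), and the paper isotopes this to a connection metric with genuinely round fibers before gluing in the torpedo-fibered disc bundle.
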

\begin{proof}
  We will show that \(M\) admits an invariant metric of positive scalar curvature if and only if \(\phi([M])=0\in R_{n-1}(\gamma(M/S^1))\).
  From this the statement follows.

  At first assume that \(M\) admits an invariant metric of positive scalar curvature.
  By the proof of Gromov's and Lawson's surgery theorem \cite{MR577131} (see also \cite{MR962295}), this is the case if and only if it admits an invariant metric of positive scalar curvature which is a connection metric on a tubular neighborhood \(N\) of \(M^{S^1}\).
  Here the metrics on the fibers of \(N\rightarrow M^{S^1}\) are so called ``torpedo metrics''.
  
  Therefore after removing a small open tubular neighborhood \(N'\) of \(M^{S^1}\), the metric on  \(M-N'\) is a product metric near the boundary.
  Its restriction to the boundary is a connection metric with fibers isometric to a round sphere.
  Hence, the volume of the \(S^1\)-orbits is constant in a small neighborhood of the boundary.
  Therefore, by Theorem \ref{sec:pre-1} and the definition of \(\phi\), we have \(\phi([M])=0\in R_{n-1}(\gamma(M/S^1))\).

  Now assume that \(\phi([M])=0\).
  Then, by the result of Stolz mentioned above, there is a metric of positive scalar curvature on \((M-N')/S^1\) whose restriction to the boundary is a connection metric with fibers isometric to \(\C P^k\), \(k>1\).
  By Theorem~\ref{sec:pre-1}, there is an invariant metric on \(M-N'\)
  whose restriction to the boundary is a connection metric for the
  bundle \(\partial(M-N')\rightarrow \partial(M-N')/S^1\).
  Since the fibers of \(\partial(M-N')/S^1\rightarrow M^{S^1}\) are
  (up to scaling) isometric to \(\C P^k\) with standard metric, the
  fibers of \(\partial (M-N')\rightarrow M^{S^1}\) are isometric to
  spheres \((S^{2k+1},g)\), where the metric \(g\) can be constructed
  from the standard round metric by shrinking the orbits of the free
  linear \(S^1\)-action on \(S^{2k+1}\).
  Moreover, the metric on \(\partial (M-N')\) is a connection metric
  for the bundle \(\partial (M-N')\rightarrow M^{S^1}\).
  Therefore the  metric on \(\partial (M-N')\) is isotopic to a connection metric with fibers isometric to round spheres.
  Hence, we can glue in the normal disc bundle of \(M^{S^1}\subset M\) equipped with an appropriate metric to get an invariant metric of positive scalar curvature on \(M\).
\end{proof}

Now we want to discuss the special case, where \(M\) is a simply
connected semi-free \(S^1\)-manifold with \(\codim M^{S^1}\geq 4\).
In this case \(M/S^1\) and \((M-M^{S^1})/S^1\) are also simply
connected (see \cite[Corollary 6.3, p. 91]{MR0413144}).
Hence, a \(\gamma(M/S^1)\)-structure on \(M/S^1\) is a Spin-structure on \((M-M^{S^1})/S^1\) if \((M-M^{S^1})/S^1\) admits a Spin-structure or an orientation on \((M-M^{S^1})/S^1\), otherwise.

Note that \((M-M^{S^1})/S^1\) admits a Spin-structure if and only if \(M\) admits a Spin-structure and the \(S^1\)-action on \(M\) is of even type.

Denote by \(\Omega_{n}^{SO,SF}\) the bordism group of closed oriented semi-free \(S^1\)-manifolds and by \(\Omega_n^{\text{Spin},\text{even},SF}\) the bordism group of closed semi-free \(S^1\)-manifolds with Spin-structure and an \(S^1\)-action of even type.
Then we have:

\begin{theorem}
\label{sec:semi-free-circle-2}
  Let \(M\) be a closed simply connected semi-free oriented \(S^1\)-manifold \(M\) of dimension \(n\geq 6\) and \(\codim M^{S^1}\geq 4\).
 If \(M\) is not spin or spin with the \(S^1\)-action of odd type, then \(M\) admits an invariant metric of positive scalar curvature if and only if the class \([M]\in \Omega^{SO,SF}_n\) can be represented by an \(S^1\)-manifold \(N\) with \(\codim N^{S^1}\geq 4\) and an invariant metric of positive scalar curvature. 
  If \(M\) is spin and the \(S^1\)-action is of even type then the same holds with the oriented equivariant bordism ring replaced by the spin equivariant bordism ring.
\end{theorem}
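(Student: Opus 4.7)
The plan is to deduce the theorem from Theorem~\ref{sec:semi-free-circle-6} by comparing $\gamma(M/S^1)$-bordism to oriented (respectively spin, even type) equivariant bordism in the simply connected setting.

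First observe that, since $M$ is simply connected and $\codim M^{S^1}\geq 4$, the inclusion $M-M^{S^1}\hookrightarrow M$ is three-connected, so $M-M^{S^1}$ is simply connected; freeness of the $S^1$-action on it then forces $(M-M^{S^1})/S^1$ to be simply connected as well. Hence the $\pi$-piece of the supergroup $\gamma(M/S^1)$ is trivial, and $\gamma(M/S^1)$ is entirely determined by whether $(M-M^{S^1})/S^1$ is spin---equivalently, as recalled just before the statement, by whether $M$ is spin with even $S^1$-action. Accordingly, $G(n,\gamma(M/S^1))=SO(n)$ in the non-spin/odd case and $G(n,\gamma(M/S^1))=\mathrm{Spin}(n)$ in the spin-even case, so a $\gamma(M/S^1)$-structure on a rank-$n$ vector bundle is the same data as an orientation, respectively a spin structure.

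Theorem~\ref{sec:semi-free-circle-6} now asserts that $M$ admits an invariant metric of positive scalar curvature iff the class $[M]\in\Omega_{n,\geq 4}^{SF}(\gamma(M/S^1))$ has a representative with invariant positive scalar curvature. One direction of the theorem is immediate: any such $\gamma(M/S^1)$-representative with invariant psc is, after forgetting the $\gamma$-structure down to its underlying orientation (respectively spin structure), an $S^1$-manifold in $\Omega_n^{SO,SF}$ (respectively $\Omega_n^{\mathrm{Spin},\mathrm{even},SF}$) with $\codim\geq 4$ and invariant psc. For the converse, let $N$ represent $[M]$ in the oriented (respectively spin-even) bordism ring with $\codim N^{S^1}\geq 4$ and invariant psc. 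The natural homomorphism of supergroups $\gamma(N/S^1)\to\gamma(M/S^1)$---which exists because the target has trivial $\pi$ and $N$ is oriented (respectively spin with $S^1$-action of even type)---composed with the canonical $\gamma(N/S^1)$-structure from Construction~\ref{sec:semi-free-circle-7} equips $N$ with an equivariant $\gamma(M/S^1)$-structure. To conclude via Theorem~\ref{sec:semi-free-circle-6} it therefore suffices to exhibit a $\gamma(M/S^1)$-bordism between $M$ and $N$ with fixed set of codimension at least four throughout. Starting from the given oriented (respectively spin) equivariant bordism $W$, I would modify $W$ by equivariant surgery in its interior to eliminate codim-two fixed components (the only problematic codimension, since fixed sets of semi-free $S^1$-actions have even codimension) and then apply Construction~\ref{sec:semi-free-circle-7} together with the analogous homomorphism $\gamma(W/S^1)\to\gamma(M/S^1)$ to equip the modified $W$ with a $\gamma(M/S^1)$-structure restricting to the given structures on $M$ and $N$.

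The principal technical obstacle is the elimination of closed codim-two fixed components from $\mathrm{int}(W)$. Each such component $F$ is a closed $(n-1)$-submanifold with equivariant normal bundle $F\times D^2$ on which $S^1$ acts by rotation on the $D^2$-factor; removing a tubular neighborhood leaves a free $S^1$-boundary $F\times S^1$ which must be capped off by a free $S^1$-manifold. The dimension hypothesis $n\geq 6$, so $\dim W\geq 7$, provides enough room for equivariant handle cancellations to remove $F$ without affecting the boundary $M\sqcup -N$ and hence without disturbing the given invariant psc metric on $N$; the bordism $W$ itself is not required to carry a psc metric, so Theorem~\ref{sec:pre} is not needed at this point.
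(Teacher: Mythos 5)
Your reduction to Theorem~\ref{sec:semi-free-circle-6} and your identification of \(\gamma(M/S^1)\)-structures with orientations (resp.\ spin structures) in the simply connected case are in line with the paper, and the spin--even case is indeed unproblematic (there a spin bordism with even semi-free action automatically has no codimension-two fixed components, so \(\Omega_{n,\geq 4}^{SF}(\gamma(M/S^1))=\Omega_n^{\text{Spin},\text{even},SF}\) and nothing needs to be done). The gap is in the non-spin/odd case, at exactly the step you flag as the ``principal technical obstacle'': you cannot, in general, eliminate the closed codimension-two fixed components from the interior of the oriented equivariant bordism \(W\). First, the equivariant normal bundle of such a component \(F\) is a possibly non-trivial complex line bundle, not \(F\times D^2\). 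More seriously, after deleting a tubular neighborhood the new boundary piece is the circle bundle \(SF\), a free \(S^1\)-manifold representing the class \([F]\in\Omega_{n-1}^{SO}(BU(1))\), and there is no reason this class vanishes; hence \(SF\) need not bound any free \(S^1\)-manifold and cannot be capped off. ``Equivariant handle cancellation'' does not address this: the obstruction is bordism-theoretic, not a matter of room to cancel handles, and what you are implicitly asserting is injectivity of the natural map \(\Omega_{n,\geq 4}^{SO,SF}\rightarrow\Omega_{n}^{SO,SF}\) (or at least that \([M]-[N]\) vanishes already in the smaller group), which is precisely the point that requires proof.

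The paper avoids this entirely. Instead of upgrading \(W\) to a codimension-\(\geq 4\) bordism, it cuts out the codimension-two fixed components and accepts the resulting circle bundles as extra boundary components: one obtains a codimension-\(\geq 4\) bordism from \(M_1\) to \(M_2\amalg N_1\amalg\dots\amalg N_k\) with the \(N_i\) free \(S^1\)-manifolds. It then shows that the Stolz-type invariant \(\phi\) from the proof of Theorem~\ref{sec:semi-free-circle-6} kills each \(N_i\): the orbit space \(N_i/S^1\) is a closed oriented manifold, every class in \(\Omega_{n-1}^{SO}\) contains a representative with positive scalar curvature, and so \(\phi([N_i])=[N_i/S^1]=0\) in \(R_{n-1}(\gamma)\). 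Consequently \(\phi([M])\) depends only on the image of \([M]\) in \(\Omega_n^{SO,SF}\), which is what makes the ``if'' direction work. This step (or some substitute for it) is missing from your argument, so as written the proposal does not establish the non-spin/odd case.
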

\begin{proof}
  At first assume that \(M\) is spin and the \(S^1\)-action on \(M\) is of even type.
  Since on a Spin-manifold with even semi-free \(S^1\)-action there are no fixed point components of codimension two, we have \(\Omega_{n,\geq 4}^{SF}(\gamma(M/S^1))=\Omega_n^{\text{Spin},\text{even},SF}\).
  Therefore the theorem follows from Theorem~\ref{sec:semi-free-circle-6} in this case.

  Next assume that we are in the other case.
  Then \(\Omega_n^{SF}(\gamma(M/S^1))\) is just the bordism group \(\Omega_{n,\geq 4}^{SO,SF}\) of closed oriented semi-free \(S^1\)-manifolds without fixed point components of codimension less than four.
By the proof of Theorem~\ref{sec:semi-free-circle-6}, \(M\) admits an
invariant metric of positive scalar curvature if and only if \(\phi([M])=0\).
  Therefore it is sufficient to show that \(\phi([M])\) depends only on the image of \([M]\) under the natural map \(\Omega_{n,\geq 4}^{SO,SF}\rightarrow \Omega_{n}^{SO,SF}\).
  This can be shown as follows.

  Let \(W\) be a bordism between the semi-free \(S^1\)-manifolds \(M_1\) and \(M_2\) such that all fixed point components of \(W\) of codimension two do not meet the boundary.
  Then we can cut out these components to get a bordism without fixed point components of codimension two between \(M_1\) and \(M_2\amalg N_1\amalg \dots \amalg N_k\), where the \(N_i\) are free \(S^1\)-manifolds.
The claim follows if we show that \(\phi([N_i])\) vanishes for all \(i\).
The orbit spaces of the \(N_i\) are closed
manifolds. Moreover, every class in \(\Omega_{n-1}^{SO}\) can be represented
by a manifold which admits a metric of positive scalar curvature
(see \cite[Proof of Theorem C]{MR577131}).
Therefore it follows from the definition of the groups
\(R_{n-1}(\gamma)\) that \(\phi([N_i])=[N_i/S^1]=0\).
\end{proof}

\subsection{The non-spin case}
\label{sec:non-spin-case}

Our next goal is to prove the following theorem.

\begin{theorem}
 \label{sec:semi-free-circle-1}
  Let \(M\) be a closed simply connected semi-free \(S^1\)-manifold of dimension \(n>5\).
  If \(M\) is not spin or spin and the \(S^1\)-action is odd, then
   the equivariant connected sum of two copies of \(M\) admits an invariant metric of positive scalar curvature.
\end{theorem}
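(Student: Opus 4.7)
My plan is to split the proof into two cases according to whether \(M\) has a codimension-two fixed point component, and in the remaining case reduce the problem to a bordism computation via Theorem \ref{sec:semi-free-circle-2}.

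If some component of \(M^{S^1}\) has codimension two, then by Theorem \ref{sec:constr-invar-psc-1} the manifold \(M\) itself already carries an invariant metric of positive scalar curvature. The equivariant connected sum \(M\#M\) is obtained from \(M\amalg M\) by attaching an equivariant \(1\)-handle along a pair of principal orbits; this is an equivariant surgery of codimension \(n-1\geq 5\), so Theorem \ref{sec:pre} transports the invariant metric to \(M\#M\). Hence I may assume \(\codim M^{S^1}\geq 4\). The same \(1\)-handle argument (applied now inside the category of semi-free \(S^1\)-manifolds without codimension-two fixed point components) shows that \([M\#M]=2[M]\in \Omega^{SO,SF}_{n,\geq 4}\), so by Theorem \ref{sec:semi-free-circle-2} it suffices to exhibit a representative of \(2[M]\) that admits an invariant metric of positive scalar curvature.

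The core of the proof is thus a bordism computation. For simply connected \(M\) with \(\codim M^{S^1}\geq 4\), I would describe an element of the equivariant bordism group by two pieces of data: the normal bundle at the fixed points, viewed as a class in \(\bigoplus_{k\geq 2}\Omega^{SO}_{n-2k}(BU(k))\) (or its appropriate \(\gamma\)-twisted version in the spin-odd case), and the bordism class of the free orbit space relative to its boundary. My plan is to show that doubling both of these pieces lands in the subgroup represented by PSC manifolds. For the orbit-space piece this uses Gromov--Lawson's surgery theorem in the non-spin setting, or Stolz's classification of PSC-classes combined with the vanishing of \(\alpha\)-type invariants in the spin-odd setting (as in the corollary of Ono's theorem proved earlier in the paper); the invariant metric on the free part then follows from Theorem \ref{sec:pre-1}. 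For the fixed-point data the plan is to realise \(2[F,N]\) by a disc bundle over a PSC base, whose total space carries an invariant connection metric of positive scalar curvature after shrinking the fibres to round spheres.

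The main obstacle will be arranging the two pieces compatibly inside a single representative of \(2[M]\), rather than separately. This is exactly where the non-spin or spin-odd hypothesis pays off: the relevant \(\hat{A}\)-type obstruction on the orbit space vanishes---the same obstruction that, in the spin-even case of Theorem \ref{sec:introduction-3}, forces the passage to \(2^k\) copies---so the two kinds of data can be synchronised after a single doubling. Executing this will require a careful examination of the \(2\)-primary structure of the relevant equivariant bordism groups for simply connected manifolds, of the kind computed by Ossa and Uchida, to verify that the subgroup generated by classes representable by invariant-PSC manifolds contains \(2[M]\).
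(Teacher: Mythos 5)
Your reduction is the same as the paper's: split off the case of a codimension-two fixed component via Theorem \ref{sec:constr-invar-psc-1}, and otherwise use Theorem \ref{sec:semi-free-circle-2} to reduce to finding a representative of \(2[M]\in\Omega^{SO,SF}_{n,\geq 4}\) admitting an invariant metric of positive scalar curvature. But the actual content of the theorem is precisely that bordism computation, and your sketch of it has a genuine gap. You propose to treat the fixed-point data and the free orbit-space part as two independent pieces, realizing \(2\) times the fixed-point data ``by a disc bundle over a PSC base.'' This does not work as stated: a disc bundle is not a closed \(S^1\)-manifold, and fixed-point data \(L\in F_*=\bigoplus_k\Omega^{SO}_{*-2k}(BU(k))\) is realized by a closed semi-free \(S^1\)-manifold only when \(L\in\kernel\mu\); the two ``pieces'' are coupled through the exact sequence \(0\to\Omega^{SO,SF}_*\xrightarrow{\lambda}F_*\xrightarrow{\mu}\Omega^{SO}_{*-2}(BU(1))\to 0\), so one must modify a representative of \(2[M]\) only by adding \emph{closed} semi-free \(S^1\)-manifolds that both carry invariant PSC metrics and have prescribed image under \(\lambda\). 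Producing such manifolds is where all the work lies in the paper: the constructions \(\Gamma(\gamma,M)\) and \(\Delta(\gamma,M)\) (fibering \(M\) over \(P(\gamma\oplus\C)\)), which have invariant PSC metrics and satisfy \(\lambda(\Delta(\gamma,M))=[\gamma](\lambda(M)-\lambda(\iota(M)))\); Lemma \ref{sec:semi-free-circle-3}, which converts this into \(\pm 2[\gamma]\lambda(M)\) according to \(\dim M\bmod 4\); Lemma \ref{sec:semi-free-circle-4}, which lets one divide fixed-point data by \(X_0\); and, for \(n\equiv 2\bmod 4\), semi-free circle actions on the Buchstaber--Ray generators of \(\Omega^{SO}_*/\mathrm{torsion}\) together with the indecomposability of \(\C P^{(n-2)/2}\) to conclude \(2[M]=0\). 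None of these ingredients, or substitutes for them, appear in your plan.

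A second, related misstep: you attribute the sufficiency of a single doubling to the vanishing of an \(\hat{A}\)-type obstruction on the orbit space. In the non-spin and spin-odd cases no such index obstruction enters at all; the factor \(2\) has a purely bordism-theoretic origin, namely that all torsion in \(\Omega^{SO}_*\) has order two (which disposes of \(n\equiv 1,3\bmod 4\)) and that the orientation-reversing conjugation \(\iota\) on fixed-point data gives \(2\lambda(M)=\lambda(M)-\lambda(\iota(M))\) in the even-dimensional cases, realized geometrically by \(\Delta\). The \(\hat{A}\)-genus is the reason the spin-even case of Theorem \ref{sec:spin-case-1} needs \(2^k\) copies, not the mechanism that makes one doubling work here, so invoking it to ``synchronise'' the two pieces does not close the gap. (Your handling of the codimension-two case is fine in substance, though note that surgery on a pair of principal orbits produces a connected sum along orbits rather than the equivariant connected sum at fixed points; either way the class is \(2[M]\) and Theorem \ref{sec:constr-invar-psc-1} applies directly.)
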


The proof of this theorem is divided into two cases:
\begin{enumerate}
\item \(M\) has a fixed point component of codimension two.
\item All fixed point components of \(M\) have codimension at least four.
\end{enumerate}

In the first case, the theorem follows from Theorem \ref{sec:constr-invar-psc-1}.
In the second case, the idea for the proof of Theorem~\ref{sec:semi-free-circle-1} is to show that the class of \(2M\) in \(\Omega_*^{SO,SF}\) can be represented by a manifold which admits an invariant metric of positive scalar curvature and does not have fixed point components of codimension less than four.
Then it follows from Theorem \ref{sec:semi-free-circle-2}.

We prepare the proof by describing some structure results about the ring \(\Omega_*^{SO,SF}\).
For background information on these results see for example \cite{MR0176478}, \cite[Chapter XV]{MR1413302} or \cite{MR0278338}.
At first there is an exact sequence 
\begin{equation*}
  \xymatrix{
    0\ar[r]&\Omega_*^{SO,SF}\ar^(.6){\lambda}[r]& F_*\ar[r]^(.3){\mu}&\Omega^{SO}_{*-2}(BU(1))\ar[r]&0.}
\end{equation*}

Here, \(F_*=\bigoplus_{n\geq 0}\Omega^{SO}_{*-2n}(BU(n))\) is the bordism ring of complex vector bundles over some base spaces.
The map \(\lambda\) sends a semi-free \(S^1\)-manifold \(M\) to the normal bundle of its fixed point set.
Note that this bundle is naturally isomorphic to a complex \(S^1\)-vector bundle over \(M^{S^1}\) of the form \(V\otimes \rho\), where \(V\) is a complex vector bundle over \(M^{S^1}\) with trivial \(S^1\)-action and \(\rho\) is the standard one-dimensional complex \(S^1\)-representation.
Therefore \(F_*\) might we viewed as the bordism group of fixed point data of semi-free oriented \(S^1\)-manifolds.

In this picture the summand \(F_0\) is isomorphic to the subgroup of
\(\Omega_*^{SO,SF}\) consisting of bordism classes of manifolds with trivial \(S^1\)-action. 
An isomorphism is induced by \(\lambda\).

The map \(\mu\) can be described as follows.
The restriction of the multiplication with elements of \(S^1\subset \C\) on a complex vector bundle to its sphere bundle defines a free \(S^1\)-action.
So we get a map from \(F_*\) to the bordism group of free \(S^1\)-manifolds.
Since every free \(S^1\)-manifold is a principal \(S^1\)-bundle over its orbit space, we may identify the bordism group of free \(S^1\)-actions with \(\Omega^{SO}_*(BU(1))\).
The map \(\mu\) is the composition of the above map and this identification.

\(\Omega_*^{SO,SF}\) and \(F_*\) are actually algebras over \(\Omega^{SO}_*\), with multiplication given by direct products.
Hence, \(\lambda\) is an algebra homomorphism.
Moreover, \(F_*\) is isomorphic to \(\Omega_*^{SO}[X_i;i\geq 0]\),
where \(X_i\) can be identified with the dual Hopf bundle over the
\(i\)-dimensional complex projective space.
Note that, by our grading of \(F_*\), \(X_i\) has degree \(2i+2\).

We denote by \(\C P^n(\rho)\) the \(n\)-dimensional complex projective space equipped with the \(S^1\)-action induced by the representation \(\rho\oplus \C^n\). Here \(S^1\) acts trivially on the \(\C^n\) summands.
Then we have
\begin{equation*}
  \lambda(\C P^n(\rho))= X_{n-1}+(-1)^nX^{n}_0.
\end{equation*}
Hence, \(F_*\) is isomorphic to \(\Omega_*^{SO}[X_0,\lambda(\C P^n(\rho)),n\geq 2]\).

For the proof of Theorem~\ref{sec:semi-free-circle-1} we need the following two lemmas.
To motivated our first lemma, consider the situation where \(M\) is a
\(2n\)-dimensional
semi-free \(S^1\)-manifold with isolated fixed points.
Then the normal bundle of the fixed point set in \(M\) is trivial.
Moreover, \(\lambda(M)=X_0^n\lambda(S)\), where \(S\) is a zero
dimensional \(S^1\)-manifold with trivial action.
By \cite[Corollar 6.24]{MR1150492}, the signature of \(M\) vanishes.
Therefore, by \cite[Corollary 6.23]{MR1150492}, \(S\) represents zero
in \(\Omega_0^{SO}\).

The following lemma is a generalization of this fact to
semi-free \(S^1\)-manifolds \(M\) such that the normal bundle of the fixed point
set \(M^{S^1}\) has a section which is nowhere zero.

\begin{lemma}
\label{sec:semi-free-circle-4}
  Let \(L\in F_*\). If \(X_0 L\in \kernel \mu = \Omega_*^{SO,SF}\), then the same holds for \(L\).
  In other words, \(L\) is the fixed point data of some semi-free \(S^1\)-manifold \(\tilde{L}\).
  Moreover, \(\tilde{L}\) is mapped to zero by the forgetful map \(\Omega_*^{SO,SF}\rightarrow \Omega_*^{SO}\).
\end{lemma}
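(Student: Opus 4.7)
The strategy is to split the lemma into an algebraic implication $\mu(L)=0$ followed by a geometric analysis of the resulting $\tilde{L}$. Throughout I represent $L\in F_*$ by a pair $(B,V)$ with $B$ a closed oriented manifold and $V\to B$ a complex vector bundle of rank $n$, so that $\mu(L)=[P(V)\to BU(1)]$ and $\mu(X_0L)=[P(V\oplus\underline{\C})\to BU(1)]$, where the classifying maps come from the relative $O(1)$-bundles.

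For the algebraic half I compare characteristic numbers of these two classes. Every characteristic number reduces, via Gysin push-forward along the projective bundle, to an integral over $B$ of a universal polynomial in $p_\ast(B)$ and $c_\ast(V)$. The two Gysin formulas $\pi_*(y^k)=s_{k-n+1}(V)$ for $P(V)$ and $\pi'_*(y^k)=s_{k-n}(V\oplus\underline{\C})=s_{k-n}(V)$ for $P(V\oplus\underline{\C})$ (using $s(\underline{\C})=1$) differ only by an index shift, so that every characteristic number detecting $\mu(L)$ appears among those detecting $\mu(X_0L)$. Hence the hypothesis $\mu(X_0L)=0$ forces every such integral to vanish, in particular those detecting $\mu(L)$, giving $\mu(L)=0$. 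By exactness of the Conner-Floyd sequence this produces a closed semi-free $S^1$-manifold $\tilde{L}$ with $\lambda(\tilde{L})=L$.

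For the null-bordism assertion $[\tilde{L}]=0\in\Omega^{SO}_*$, I would invoke the Atiyah-Bott-Singer equivariant localisation theorem, which expresses each Pontryagin number of $\tilde{L}$ as a sum over fixed components of universal residue polynomials in the Pontryagin classes of the components and the Chern classes of their equivariant normal bundles. These residue polynomials depend only on $\lambda(\tilde{L})=L$, and by the same Segre/Gysin shift argument they factor through integrals of the form detected by $\mu(X_0L)$; their vanishing by hypothesis therefore forces every Pontryagin number of $\tilde{L}$ to vanish, giving $[\tilde{L}]=0$. A more geometric alternative would be to construct $\tilde{L}$ directly by equivariant surgery on $M$ that strips off the trivial $\underline{\C}$-summand from a fixed component of $M$, so that the trace of the surgery provides a non-equivariant bounding of $\tilde{L}$.

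The main obstacle is the null-bordism step. The Gysin/Segre comparison between $\mu(L)$ and $\mu(X_0L)$ is a clean projective-bundle calculation, but transferring this to the Atiyah-Bott residue polynomials expressing the Pontryagin numbers of $\tilde{L}$ requires delicate bookkeeping of the residue formulas together with the dimension shift coming from the trivial summand; alternatively, in the geometric approach the difficulty is showing that the equivariant surgery producing $\tilde{L}$ can be arranged so that the non-equivariant trace genuinely bounds $\tilde{L}$ in $\Omega^{SO}_*$.
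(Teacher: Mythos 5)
The first half of your proposal (deducing \(\mu(L)=0\) from \(\mu(X_0L)=0\) by comparing characteristic numbers of \(P(V)\) and \(P(V\oplus\C)\)) is essentially the paper's argument, but as written it is too quick: the two projectivizations do not have the same tangent bundle, so the Gysin index shift alone does not make the numbers of \(P(V)\) literally ``appear among'' those of \(P(V\oplus\C)\). One needs the relations \(p(P(E))=p(P(E\oplus\C))\sum_{i\geq 0}(-x^2)^i\) and \(w(P(E))=w(P(E\oplus\C))\sum_{i\geq 0}(-x)^i\), which express each Pontrjagin and Stiefel--Whitney number of \(P(E)\) as a universal \emph{linear combination} of those of \(P(E\oplus\C)\); also note that the Conner--Floyd criterion in \(\Omega_*^{SO}(BU(1))\) requires Stiefel--Whitney numbers as well as Pontrjagin numbers. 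These are fillable gaps, and this part is the same route as the paper.

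The genuine gap is in the null-bordism step, and it is twofold. First, the key claim --- that the localization residue expressions for the characteristic numbers of \(\tilde L\) ``factor through integrals of the form detected by \(\mu(X_0L)\)'' --- is exactly the crux and is only asserted; you acknowledge this yourself. Second, even if that bookkeeping were carried out, Atiyah--Bott--Singer localization only controls Pontrjagin numbers, i.e.\ rational information, while \(\Omega_*^{SO}\) has \(2\)-torsion detected by Stiefel--Whitney numbers; so at best you would conclude that \([\tilde L]\) is a torsion class, not that it is zero, and localization gives no handle on the Stiefel--Whitney numbers. The paper avoids both problems with a short geometric construction: let \(E\) be the principal \(S^1\)-bundle associated to the tautological bundle over \(\C P^1(\rho)\), with the action lifted so that it is trivial on the fiber over one fixed point and multiplication on the fiber over the other. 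Then \(E\times_{S^1}\tilde L\) is a semi-free \(S^1\)-manifold with \(\lambda(E\times_{S^1}\tilde L)=X_0L-X_0\tilde L'\), where \(\tilde L'\) denotes \(\tilde L\) with trivial action. Applying \(\mu\), using \(\mu\circ\lambda=0\) and the hypothesis \(\mu(X_0L)=0\), gives \(\mu(X_0)\,[\tilde L']=0\); since \(\mu(X_0)\) is part of an \(\Omega_*^{SO}\)-basis of \(\Omega_*^{SO}(BU(1))\), it follows that \([\tilde L]=0\) in \(\Omega_*^{SO}\), torsion included. In effect this construction proves (integrally) precisely the ``factoring'' you hoped for, namely \(\mu(X_0)\cdot[\tilde L]=\mu(X_0L)\); without it, neither your localization sketch nor the vague surgery alternative establishes the second assertion of the lemma.
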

\begin{proof}
  By Theorem 17.5 of \cite[p. 49]{MR0176478}, a class \([\phi:L'\rightarrow BU(1)]\in \Omega_*^{SO}(BU(1))\) represents zero if and only if its Stiefel-Whitney and Pontrjagin numbers vanish, i.e. the characteristic numbers of the form
  \begin{align*}
    \langle \phi^*(x)^lp_I(L'),[L']\rangle&&\text{and}&&\langle \phi^*(x)^l w_I(L'),[L']\rangle
  \end{align*}
vanish,
where \(x\) is a generator of \(H^2(BU(1))\) and \(p_I(L')\) and \(w_I(L')\) are products of the Pontrjagin classes and Stiefel--Whitney classes of \(L'\), respectively.
  Now note that \(\mu(L)\) is represented by a sum of tautological bundles over the projectivizations \(P(E_i)\) of  complex vector bundles \(E_i\).
  The class \(\mu(X_0L)\) is represented by the sum of tautological bundles over the projectivizations \(P(E_i\oplus \C)\)  of  the sum \(E_i\oplus\C\)  of these complex vector bundles with a trivial line bundle.
  
  Let \(E\) be a complex vector bundle of dimension \(n\) over a
  connected manifold \(B\).
  Then for \(K=\mathbb{Z}_2\) or \(K=\Q\), we have
  \begin{equation*}
    H^*(P(E);K)=H^*(B;K)[x]/(\sum_{i=0}^nc_i(E)x^{n-i})
  \end{equation*}
and
  \begin{equation*}
    H^*(P(E\oplus \C);K)=H^*(B;K)[x]/((\sum_{i=0}^nc_i(E)x^{n-i})x),
  \end{equation*}
where \(x\) has degree two and is minus the first Chern-class of the tautological bundle over \(P(E)\) and \(P(E\oplus \C)\).

Let \(y\) be a generator of the top cohomology group of \(B\).
Then we can orient \(P(E)\) and \(P(E\oplus \C)\) in such a way that
\begin{equation*}
  \langle yx^{n-1},[P(E)]\rangle=1=\langle yx^{n},[P(E\oplus \C)]\rangle.
\end{equation*}

Hence, if \(f(x)\) is a power series with coefficients in \(H^*(B;K)\), then we have
\begin{equation*}
  \langle f(x),[P(E)]\rangle=\langle xf(x),[P(E\oplus \C)]\rangle.
\end{equation*}

The total Pontrjagin and Stiefel-Whitney classes of \(P(E)\) are given by
\begin{align*}
  p(P(E))&=p(B)\prod_i(1+(a_i+x)^2),& w(P(E))&=w(B)\prod_i(1+a_i+x),
\end{align*}
where the \(a_i\) are the formal roots of the Chern classes of \(E\).

Therefore there are the following relations between the total Pontrjagin-classes and Stiefel-Whitney-classes  of \(P(E)\) and \(P(E\oplus \C)\) (both viewed as power series in \(x\) with coefficients in \(H^*(B;K)\)) 
\begin{align*}
  p(P(E\oplus \C))&=p(P(E))(1+x^2),& w(P(E\oplus \C))&=w(E)(1+x).
\end{align*}
This implies
\begin{align*}
  p(P(E))&=p(P(E\oplus \C))(\sum_{i=0}^{\infty}(-x^2)^i),& w(P(E))&=w(E\oplus \C)(\sum_{i=0}^{\infty}(-x)^i).
\end{align*}

For a power series \(f(x)\) and a finite sequence \(I\) of positive integers let \(f_I=\prod_{i\in I}f_i\), where \(f_i\) denotes the degree \(i\) part of \(f\).
With this notation we have
\begin{align*}
  \langle x^lp_I(P(E)),[P(E)]\rangle&=\langle x^{l+1} p_I(P(E)),[P(E\oplus\C)]\rangle\\
  &=\langle x^{l+1}(p(P(E\oplus \C))(\sum_{i=0}^{\infty}(-x^2)^i))_I,[P(E\oplus \C)]\rangle\\
  &= \sum_J a_J\langle x^{l+1+d_J}p_J(P(E\oplus \C)),[P(E\oplus \C)]\rangle,
\end{align*}
where \(a_J,d_J\in \mathbb{Z}\) only depend on \(I\) but not on \(E\).
A similar calculation shows
\begin{align*}
  \langle x^lw_I(P(E)),[P(E)]\rangle&=\sum_J b_J\langle x^{l+1+d_J'}w_J(P(E\oplus \C)),[P(E\oplus \C)]\rangle.
\end{align*}

Since \(\mu(L)\) and \(\mu(X_0L)\) are linear combinations of some projectivizations of complex vector bundles, it follows that \(\mu(L)=0\) if \(\mu(X_0L)=0\).
This proves that there is a \(\tilde{L}\) with \(L=\lambda(\tilde{L})\).

Now let \(E\) be the principal \(S^1\)-bundle associated to the tautological bundle over \(\C P^1(\rho)\).
Then the \(S^1\)-action on \(\C P^1(\rho)\) lifts into \(E\) in such a way that the action on the fiber over one of the fixed points in \(\C P^1(\rho)\)  is trivial and multiplication on the fiber over the other fixed point.
This action induces an \(S^1\)-action on \(E\times_{S^1}\tilde{L}\), for this action we have
\begin{equation*}
  \lambda(E\times_{S^1}\tilde{L})= X_0 L - X_0 \tilde{L}',
\end{equation*}
where \(\tilde{L}'\) is \(\tilde{L}\) with trivial \(S^1\)-action.
Therefore, we have
\begin{equation*}
  \mu(X_0)\tilde{L}'= \mu(X_0 L) - \mu(\lambda(E\times_{S^1}\tilde{L}))=0.
\end{equation*}
But \(\mu(X_0)\) is part of a \(\Omega_*^{SO}\)-basis of \(\Omega_*^{SO}(BU(1))\).
Hence \(\tilde{L}'\) represents zero in \(\Omega_*^{SO}\) and the lemma is proved.
\end{proof}

On \(\Omega_*^{SO,SF}\) there is an involution \(\iota\) which sends a semi-free \(S^1\)-manifold \(M\) to itself equipped with the inverse \(S^1\)-action.
Similarly there is an involution on \(F_*\), which sends a complex vector bundle to its dual and changes the orientation of the base space if the fiber dimension is odd.
Since \(\lambda\) is compatible with these two involutions, we denote the involution on \(F_*\) also by \(\iota\).

\begin{lemma}
\label{sec:semi-free-circle-3}
  Let \([M]\in \Omega_*^{SO,SF}\). Then we have
  \begin{equation*}
    \iota([M])=
    \begin{cases}
      [M]&\text{if } \dim M \equiv 0 \mod 4\\
      -[M]&\text{if } \dim M \equiv 2 \mod 4.
    \end{cases}
  \end{equation*}
\end{lemma}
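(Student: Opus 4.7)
The plan is to use the injectivity of $\lambda\colon \Omega_*^{SO,SF} \to F_*$, together with its equivariance with respect to the involutions, so that the identity $\iota([M]) = (-1)^{\dim M/2}[M]$ can be checked inside the polynomial algebra $F_* \cong \Omega_*^{SO}[X_0, X_1, \dots]$.

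The first step is to determine $\iota$ on the generators $X_i$. Complex conjugation $\sigma\colon \C P^i \to \C P^i$ has degree $(-1)^i$ and pulls the dual Hopf bundle $X_i$ back to $X_i^*$, so $[\C P^i, X_i^*] = (-1)^i [\C P^i, X_i]$ in $F_*$. Combined with the definition of $\iota$ on $F_*$, which dualizes the fiber and, since $X_i$ has odd complex rank, also reverses the base orientation, this yields $\iota(X_i) = -[\C P^i, X_i^*] = (-1)^{i+1} X_i = (-1)^{\deg(X_i)/2}\, X_i$. Substituting into $\lambda(\C P^n(\rho)) = X_{n-1} + (-1)^n X_0^n$ then gives $\iota(\lambda(\C P^n(\rho))) = (-1)^n \lambda(\C P^n(\rho))$, and multiplicativity shows $\iota(m) = (-1)^{\deg(m)/2}\, m$ for every monomial $m$ in $X_0$ and the classes $\lambda(\C P^n(\rho))$, which together with $\Omega_*^{SO}$ generate $\lambda(\Omega_*^{SO,SF})$.

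For a general class, I would write $\lambda([M]) = \sum_\alpha b_\alpha m_\alpha$ with $b_\alpha \in \Omega^{SO}_{n_\alpha}$ and $m_\alpha$ a monomial of (necessarily even) degree $d_\alpha = \dim M - n_\alpha$. Since $\iota$ acts trivially on the $\Omega_*^{SO}$-coefficients, the lemma reduces to verifying $(-1)^{n_\alpha/2}\, b_\alpha = b_\alpha$ for each $\alpha$ with $n_\alpha$ even. This is automatic when $n_\alpha \equiv 0 \pmod 4$. When $n_\alpha \equiv 2 \pmod 4$, it follows from the classical fact that $\Omega^{SO}_m$ is purely $2$-torsion for $m \not\equiv 0 \pmod 4$, itself a consequence of Thom's computation $\Omega_*^{SO} \otimes \Q = \Q[\C P^{2k}]_{k\ge 1}$ together with the $2$-primary nature of all torsion in $\Omega_*^{SO}$. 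Injectivity of $\lambda$ then delivers the identity in $\Omega_*^{SO,SF}$.

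The delicate point is the first step: getting the sign $(-1)^{i+1}$ in $\iota(X_i)$ by correctly combining the degree of complex conjugation on $\C P^i$ with the orientation reversal implicit in $\iota$ on odd-rank bundles. After that, the argument reduces to routine polynomial-ring bookkeeping and the $2$-torsion facts about oriented bordism.
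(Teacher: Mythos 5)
Your proof is correct and takes essentially the same route as the paper: write \(\lambda([M])\) in the polynomial generators of \(F_*\), compute the sign of \(\iota\) on them via complex conjugation (the paper does this through the equivariant diffeomorphism \(\C P^n(\rho)\cong\iota(\C P^n(\rho))\), you through the identity \(\iota(X_i)=(-1)^{i+1}X_i\), which amounts to the same computation), and then absorb the leftover sign into the \(\Omega_*^{SO}\)-coefficients using that \(\Omega_m^{SO}\) is \(2\)-torsion for \(m\not\equiv 0\bmod 4\), concluding by injectivity and \(\iota\)-equivariance of \(\lambda\).
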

\begin{proof}
  At first note that there is an equivariant diffeomorphism between \(\C P^n(\rho)\) and \(\iota(\C P^n(\rho))\) given by complex conjugation.
  It is orientation preserving if and only if \(n\) is even.
  Moreover, \(\iota(X_0)=-X_0\).

  As noted before we can write \(\lambda([M])\) as a linear combination of products 
  \begin{equation*}
X_0^k\lambda(\prod_i \C P^{n_i}(\rho))\times \beta=P\times \beta    
  \end{equation*}
  with \(\beta\in \Omega_*^{SO}\).
  By the above remark we have \(\iota(P\times\beta)=(-1)^{k+l}P\times \beta\), where \(l\) is the number of odd \(n_i\) appearing in the product.
  If \(\dim \beta \not\equiv 0\mod 4\), then \(\beta\) is of order two.
  Therefore we have \(\iota(P\times\beta)=-P\times \beta=P\times \beta\) in this case.

  If \(\dim \beta \equiv 0\mod 4\), we have \(\dim M - 2(k+l) \equiv \dim \beta \equiv 0 \mod 4\).
  Therefore the statement follows.
\end{proof}

The following construction provides examples of semi-free \(S^1\)-manifolds with invariant metrics of positive scalar curvature and without fixed point components of dimension less than four.

\begin{constr}
\label{sec:non-spin-case-1}
 Let \(\gamma\) be a complex line bundle over an oriented manifold \(M_0\).
Let also \(M\) be an oriented \(S^1\times S^1\)-manifold such that the \(S^1\)-actions induced by the inclusions of the \(S^1\)-factors in \(S^1\times S^1\) are semi-free.
Denote \(M\) equipped with the first (resp. second) of these actions by \(M_1\) (resp. \(M_2\)).
  Then the multiplication on \(\gamma\) induces an \(S^1\)-action on
  the projectivization \(P(\gamma\oplus\C)\).
  
  This action can be lifted in two different ways in the tautological
  bundle
  \begin{equation*}
    \gamma''=\{(x,v)\in P(\gamma\oplus \C)\times(\gamma\oplus \C);\;
    v\in x\}.
  \end{equation*}

The first action is given by \(g(x,v)=(g \cdot x,g\cdot v)\), for
\(g\in S^1\) and \((x,v)\in \gamma''\).
Here \(\cdot\) denotes the \(S^1\)-action induced by multiplication on
\(\gamma\).

The second action is given by \(g(x,v)=(g\cdot x,
g\cdot(\lambda(g)v))\).
Here \(\cdot\) is a defined as above and \(\lambda(g)v\) is given by
complex multiplication of \(v\in \gamma\oplus \C\) with
\(\lambda(g)=g^{-1}\).

By dualizing these two actions we get two \(S^1\)-actions on the dual
\(\gamma'\) of \(\gamma''\).

  Let \(E\) be the principal \(S^1\times S^1\)-bundle associated to \(\gamma'\oplus\gamma'\).
  Then from the two \(S^1\)-actions on \(\gamma'\), we see that the
  \(S^1\)-action on \(P(\gamma\oplus \C)\) lifts into \(E\) in such a way that the weights of the restriction of the lifted \(S^1\)-action to a fiber over the two fixed point components in \(P(\gamma\oplus \C)\) are given by \((1,0)\) and \((0,-1)\), respectively.

The \(S^1\)-action on \(E\) induces a semi-free \(S^1\)-action on \(\Gamma(\gamma,M)=E\times_{S^1\times S^1}M\) and we have
\begin{equation*}
  \lambda(\Gamma(\gamma,M))= [\gamma]\lambda(M_1)-[\gamma]\lambda(\iota(M_2)),
\end{equation*}
where \([\gamma]\in\Omega_*^{SO}(BU(1))\) is represented by the bundle \(\gamma\).
If the \(S^1\)-action on all components of \(M_i\), \(i=1,2\), is non-trivial, then \(\Gamma(\gamma,M)\) does not have any fixed point components of codimension less than four.

Moreover, \(\Gamma(\gamma,M)\) admits an invariant metric of positive scalar curvature.
This can be seen as follows.
Since there is an invariant metric of positive scalar curvature on the fibers \(\C P^1(\rho)\) of \(P(\gamma\oplus \C)\), it follows that there is an \(S^1\)-invariant connection metric of positive scalar curvature on \(P(\gamma\oplus \C)\).
Therefore it follows from Theorem~\ref{sec:pre-1} that \(E\) admits an \(S^1\times S^1\times S^1\)-invariant metric of positive scalar curvature.
  Hence, \(E\times M\) admits an \(S^1\times S^1\times S^1\)-invariant metric of positive scalar curvature.
  Now it follows, again from Theorem~\ref{sec:pre-1}, that \(\Gamma(\gamma,M)=E\times_{S^1\times S^1}M\) admits an \(S^1\)-invariant metric of positive scalar curvature.
\end{constr}

Let \(\gamma\) and \(M_0\) as in the above construction and \(M\) a semi-free \(S^1\)-manifold.
We equip \(M\) with a \(S^1\times S^1\)-action induced by the homomorphism
 \(S^1\times S^1\rightarrow S^1\), \((z_1,z_2)\mapsto z_1z_2\)
and define \(\Delta(\gamma,M)=\Gamma(\gamma,M)\).
Then we have
\begin{equation*}
  \lambda(\Delta(\gamma,M))= [\gamma]\lambda(M)-[\gamma]\lambda(\iota(M)).
\end{equation*}

Now we can prove Theorem~\ref{sec:semi-free-circle-1}.

\begin{proof}[Proof of Theorem~\ref{sec:semi-free-circle-1}]
  Let \(M\) be a semi-free \(S^1\)-manifold.
  By Theorem~\ref{sec:constr-invar-psc-1}, we may assume that \(M\) does not have fixed point components of codimension less than four.
  Denote by \(n\) the dimension of \(M\) and assume \(n\geq 6\).

  If \(n\equiv 1,3\mod 4\), then the class \([M]\in \Omega_*^{SO,SF}\) is of order two, because all torsion elements of \(\Omega_*^{SO}\) have order two, \(\Omega_*^{SO}/\text{torsion}\) is concentrated in degrees divisible by four and the generators of \(F_*\) all have even degrees.
  Therefore, by Theorem~\ref{sec:semi-free-circle-2}, the theorem follows in this case.

  Next assume that \(n\equiv 0\mod 4\).
  Then there are an \(L\) in the augmentation ideal of \(F_*=\Omega_*^{SO}[X_0,\lambda(\C P^n(\rho));n\geq 2]\) and \(\beta_J\in \Omega_*^{SO}\) such that
  \begin{equation*}
    \lambda(M)=\sum_J \beta_J \lambda(\prod_{i\in J} \C P^{i}(\rho)) + X_0 L.
  \end{equation*}
  Here, the sum is taken over all finite sequences \(J\) with at least two elements, i.e. the products \(\prod_{i\in J} \C P^{i}(\rho)\) consist out of at least two factors. 
  Since all these products admit invariant metrics of positive scalar curvature, by Theorem~\ref{sec:semi-free-circle-2}, we only have to deal with the case \(\lambda(M)= X_0 L\).
 It follows from Lemma~\ref{sec:semi-free-circle-4} that there is a \(S^1\)-manifold \(\tilde{L}\) of dimension \(n-2\) with \(\lambda(\tilde{L})=L\).
 Since \(L\) is contained in the augmentation ideal of \(F_*\), we may assume that \(\tilde{L}\) does not have any components with trivial \(S^1\)-action.
 Hence, by Lemma~\ref{sec:semi-free-circle-3}, we have \(2\lambda(M)=\lambda(\Delta(X_0,\tilde{L}))\).
 Therefore the statement follows from Theorem~\ref{sec:semi-free-circle-2}.

 Now assume that \(n\equiv 2\mod 4\).
 Then as in the previous case, me may assume that
 \begin{equation*}
   \lambda(M)=X_0\lambda(\tilde{L}),
 \end{equation*}
 where \(\tilde{L}\) is a semi-free \(S^1\)-manifold without components on which \(S^1\) acts trivially.
 By a similar argument as above, we have
 \begin{equation*}
   \lambda(\tilde{L})=\sum_J \beta_J\lambda(\prod_{i\in J} \C P^{i}(\rho)) + X_0\lambda(L')
 \end{equation*}
  with \(L'\in \Omega_{n-4}^{SO,SF}\), \(\beta_J\in \Omega_*^{SO}\). 
Here the sum is taken over all finite sequences \(J\) with at least one element.
At first we show  that we may assume \(L'=0\). 
Let \(L''\) be the union of those components of \(L'\) on which the \(S^1\)-action is non-trivial.
Then we have, for 
\begin{equation*}
  N=2\C P^2(\rho)\times L''-\Delta(X_1,L''),
\end{equation*}
  \(\lambda(N)=2X_0^2\lambda(L'')=2X_0^2\lambda(L') \) because \(L'-L''\) has order two in \(\Omega_*^{SO}\).
By a similar construction, one sees that we may assume that the products \(\prod_{i\in J}\C P^{i}(\rho)\) do not have factors of odd complex dimension.

The next step is to show that we may assume that all these products consist out of exactly one factor.

To see this note that one can equip \(\prod_{i\in J}\C P^{i}\) with two semi-free commuting \(S^1\)-actions, namely \(\prod_{i\in J}\C P^{i}(\rho)\) and \(\prod_{i\in J-\{i_0\}}\C P^{i}\times \C P^{i_0}(\rho)\).
Therefore, by Lemma~\ref{sec:semi-free-circle-3}, we have
\begin{align*}
  \lambda(\Gamma(X_0,\prod_{i\in J} \C P^{i}))&=X_0\lambda( \prod_{i\in J} \C P^{i}(\rho)) - X_0\lambda( \C P^{i_0}(\rho))\times \prod_{i\in J-\{i_0\}} \C P^{i}.
\end{align*}
Hence, after adding several multiples of \(\Gamma(X_0,\prod_{i\in J} \C P^{i})\), we may assume that
\begin{equation*}
  \lambda(2M)=X_0\sum_{k=1}^{(n-2)/4} 2\beta_k \lambda(\C P^{2k}(\rho)),
\end{equation*}
with \(\beta_k\in \Omega_*^{SO}\).
Since all torsion elements in \(\Omega_*^{SO}\) are of order two, this sum depends only on the equivalence classes of the \(\beta_k\) in \(\Omega_*^{SO}/\text{torsion}\).

\(\Omega_*^{SO}/\text{torsion}\) is a polynomial ring over \(\mathbb{Z}\) with one generator \(Y_{4k}\) in each dimension divisible by four.
We will construct a non-trivial semi-free \(S^1\)-action on each of the \(Y_{4k}\).
The \(Y_{4k}\) can be chosen in such a way that they admit stably complex structures, that is they belong to the image of the natural map \(\Omega_*^U\rightarrow \Omega_*^{SO}\).
It has been shown by Buchstaber and Ray \cite{MR1813798}, \cite{MR1639388} that \(\Omega_*^U\) is generated by certain projectivizations of sums of complex line bundles over bounded flag manifolds.
These generators are toric manifolds and admit a non-trivial semi-free \(S^1\)-action induced by multiplication on one of the line bundles.
Hence, we may assume that each \(Y_{4k}\) admits a non-trivial semi-free \(S^1\)-action.
We denote the resulting \(S^1\)-manifold by \(Y_{4k}'\).

Next we show that we may assume
\begin{equation*}
  \lambda(2M)=X_0\left(\sum_{k=1}^{(n-2)/4-1}\sum_{J_k} 2a_{k,J_k}\prod_{i\in J_k}Y_{4i} \lambda(Y_{4k}')+2b\lambda(\C P^{(n-2)/2}(\rho))\right).
\end{equation*}
Here the second sum is over all finite sequences \(J_k\) of elements of the set \(\{1,\dots,k\}\) and \(a_{k,J_k},b\in \mathbb{Z}\).

Since each \(\beta_k\), \(k<(n-2)/4\), is a linear combination of products of the \(Y_{4k}\), this can be achieved by adding to \(2M\) multiples of manifolds of the form \(\Gamma(X_0,Y_{4i}\times \C P^{2k})\) and \(\Gamma(X_0,Y_{4i_1}\times Y_{4i_2})\), where \(S^1\times S^1\) acts factorwise on the product, i.e. the \(i\)-th \(S^1\)-factor acts on the \(i\)-th factor of the product.

The manifold \(\C P^{(n-2)/2}\) is indecomposable in \(\Omega_*^{SO}/\text{torsion}\).
Therefore it follows from the above structure results on \(\Omega_*^{SO}/\text{torsion}\) and Lemma~\ref{sec:semi-free-circle-4} that \(2M=0\) in \(\Omega_*^{SO,SF}\).
This proves the theorem.
\end{proof}

\subsection{The spin case}
\label{sec:spin-case}

In \cite{MR1758446} Lott constructed a generalized \(\hat{A}\)-genus for orbit spaces of semi-free even \(S^1\)-actions on \(\text{Spin}\)-manifolds.
If \(M/S^1\) has dimension divisible by \(4\), then \(\hat{A}(M/S^1)\)
is an integer.
In all other dimensions it vanishes.
Moreover, if \(M/S^1\) is a manifold without boundary, i.e. the
\(S^1\)-action on \(M\) is free or trivial, Lott's generalized
\(\hat{A}\)-genus of \(M/S^1\) coincides with the usual
\(\hat{A}\)-genus of \(M/S^1\).

He showed that for a semi-free even \(S^1\)-manifold \(M\), \(\hat{A}(M/S^1)\) vanishes if \(M\) admits an invariant metric of positive scalar curvature.
In this subsection we prove the following partial  converse to his result.

\begin{theorem}
 \label{sec:spin-case-1}
  Let \(M\) be a closed simply connected Spin-manifold of dimension \(n>5\)  with an even  semi-free  \(S^1\)-action.
  Then  \(\hat{A}(M/S^1)=0\) if and only if there is a \(k\in \N\) such that
   the equivariant connected sum of \(2^k\) copies of \(M\) admits an invariant metric of positive scalar curvature.
\end{theorem}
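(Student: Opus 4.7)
My plan is to use Theorem~\ref{sec:semi-free-circle-2} to reduce the statement to a bordism question in $\Omega_n^{\text{Spin},\text{even},SF}$, and then adapt the argument of Theorem~\ref{sec:semi-free-circle-1} by replacing oriented bordism with Spin bordism throughout. For the ``only if'' direction, equivariant connected sum of two Spin $S^1$-manifolds at a pair of isolated $S^1$-fixed points (with matching tangential representations) is equivariantly Spin-bordant to the disjoint union, and Lott's generalised $\hat A$-genus is a bordism invariant of the equivariant Spin orbit space and is additive on disjoint unions. Hence an invariant psc metric on the $2^k$-fold equivariant connected sum would, by Lott's vanishing theorem, force $2^k \hat A(M/S^1)=0$ and thus $\hat A(M/S^1)=0$.

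For the ``if'' direction, Theorem~\ref{sec:semi-free-circle-2} reduces the problem to exhibiting, for some $k$, a representative of $[2^k M]\in\Omega_n^{\text{Spin},\text{even},SF}$ that carries an invariant psc metric. Since an even semi-free $S^1$-action on a Spin manifold admits no codimension-two fixed components, the fixed-point localisation used in Section~\ref{sec:non-spin-case} adapts to the Spin setting and yields a split short exact sequence
\[
0\longrightarrow \Omega_*^{\text{Spin},\text{even},SF}\xrightarrow{\lambda} F_*^{\text{Spin}}\xrightarrow{\mu}\Omega_{*-2}^{\text{Spin}}(BU(1))\longrightarrow 0,
\]
where $F_*^{\text{Spin}}$ is the Spin analogue of $F_*$ with the summand $\Omega_{*-2}^{\text{Spin}}(BU(1))$ removed. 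The constructions $\Gamma(\gamma,N)$ and $\Delta(\gamma,N)$ and products $\prod_i\C P^{n_i}(\rho)$ continue to produce classes in $\Omega_*^{\text{Spin},\text{even},SF}$ that carry invariant psc metrics via Theorem~\ref{sec:pre-1}, provided the input is Spin. Using the Anderson--Brown--Peterson description of $\Omega_*^{\text{Spin}}/\text{torsion}$ to choose convenient polynomial generators (all of which admit non-trivial semi-free $S^1$-actions after suitably pairing them with Spin projectivisations of line bundles), I would follow the case analysis $n\equiv 0,1,2,3\pmod 4$ from the proof of Theorem~\ref{sec:semi-free-circle-1} and reduce the class $2^k[M]$, modulo psc-representable bordism classes, to an integer multiple of a single indecomposable fixed-point datum, detected by one $\hat A$-type characteristic number of $\lambda(M)\in F_n^{\text{Spin}}$.

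The crucial and most delicate step is to match this residual invariant with Lott's $\hat A(M/S^1)$. My approach is to exploit that Lott's invariant is rigid under equivariant Spin bordism and admits an Atiyah--Bott-type localisation on the fixed point set, expressing it as a universal evaluation of Spin characteristic classes against the fixed point data $\lambda(M)$; I would check this agreement on the generators of $F_n^{\text{Spin}}$ that survive the reduction of the previous paragraph. Once a proportionality between the obstruction above and $\hat A(M/S^1)$ is established, the hypothesis $\hat A(M/S^1)=0$ kills the obstruction rationally, and one further factor $2^{k'}$ clears any residual $2$-power denominator in the bordism relation; the resulting class of $2^{k+k'}[M]$ in $\Omega_n^{\text{Spin},\text{even},SF}$ is then represented by an equivariantly Spin $S^1$-manifold with invariant psc metric, and Theorem~\ref{sec:semi-free-circle-2} concludes the proof.
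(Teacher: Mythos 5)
Your ``only if'' direction is fine and matches the intended argument: Lott's genus is an equivariant bordism invariant, additive under equivariant connected sum, and vanishes in the presence of an invariant psc metric, so $2^k\hat A(M/S^1)=0$ forces $\hat A(M/S^1)=0$. The reduction of the ``if'' direction to $\Omega_n^{\mathrm{Spin},\mathrm{even},SF}$ via Theorem~\ref{sec:semi-free-circle-2} is also the right first move. But the core of your ``if'' direction rests on a false structural claim. For \emph{even} semi-free actions on Spin manifolds the fixed-point map $\lambda$ is \emph{not} injective: the correct sequence (after inverting $2$) is
\begin{equation*}
\Omega^{\mathrm{Spin}}_{*-1}(BU(1))\otimes\mathbb{Z}[\tfrac12]\longrightarrow
\Omega_*^{\mathrm{Spin},\mathrm{even},SF}\otimes\mathbb{Z}[\tfrac12]
\xrightarrow{\;\lambda\;} F^{\mathrm{even}}_*\otimes\mathbb{Z}[\tfrac12]
\xrightarrow{\;\mu\;}\Omega^{SO}_{*-2}(BU(1))\otimes\mathbb{Z}[\tfrac12],
\end{equation*}
where the kernel of $\lambda$ is the image of the bordism group of \emph{free} even $S^1$-actions; only in the odd case does one get a short exact sequence with injective $\lambda$. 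This is not a technicality: the kernel of $\lambda$ is exactly where the $\hat A$-obstruction lives. If your split short exact sequence were correct, matching fixed-point data with psc-representable manifolds would already kill the bordism class, and the theorem would hold with no condition on $\hat A(M/S^1)$ at all, contradicting Lott's vanishing theorem. For the same reason your ``most delicate step'' is aimed at the wrong target: $\hat A(M/S^1)$ cannot be expressed as a characteristic number of the fixed-point data $\lambda(M)$, since it is already nonzero in general for free actions, where $\lambda(M)=0$ and the genus is the ordinary $\hat A$-genus of the quotient.

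The actual mechanism is the opposite of what you propose: one shows (by a case analysis modulo $2$-torsion, using the generators $M_i$ including $\mathbb{H}P^2$, the constructions $\Gamma$ and $\Delta$, and Lemma~\ref{sec:spin-case-2} to check that the auxiliary manifolds are Spin with even action) that for some $k$ the \emph{entire} fixed-point datum $\lambda(2^k[M])$ is realized by a psc-representable class $[M']$. Then $2^{k+k'}[M]-2^{k'}[M']$ is equivariantly bordant to a \emph{free} $S^1$-manifold $N$, and after surgeries making $N/S^1$ simply connected the residual obstruction is $\hat A(N/S^1)$, which equals the corresponding multiple of $\hat A(M/S^1)$ by bordism invariance of Lott's genus and its agreement with the ordinary $\hat A$-genus in the free case; the free case itself is settled by Theorem~\ref{sec:pre-1} together with Stolz's theorem for the simply connected Spin quotient (the extra factor of $2$ disposing of the $\alpha$-versus-$\hat A$ discrepancy). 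Without this identification of the kernel of $\lambda$ with the free part, your reduction does not connect the bordism-theoretic obstruction to $\hat A(M/S^1)$, so the proposed proof has a genuine gap at its central step.
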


The strategy for the proof of Theorem~\ref{sec:spin-case-1} is the same as in the proof of Theorem~\ref{sec:semi-free-circle-1}.
That means that we construct generators for the kernel of the \(\hat{A}\)-genus which admit invariant metrics of positive scalar curvature.
To do so we first review some results about the ring \(\Omega_*^{\text{Spin},\text{even},SF}\otimes \mathbb{Z}[\frac{1}{2}]\) for proofs of these results with \(\mathbb{Z}[\frac{1}{2}]\) replaced by the rationals see \cite{MR882700}. The proofs there also work in our case since  \(\Omega_*^{\text{Spin}}\otimes \mathbb{Z}[\frac{1}{2}]\cong  \Omega_*^{SO}\otimes \mathbb{Z}[\frac{1}{2}]\).
There are exact sequences

\begin{equation*}
  \xymatrix{
    \Omega^{\text{Spin}}_{*-1}(BU(1))\otimes \mathbb{Z}[\frac{1}{2}]\ar[r]&\Omega_*^{\text{Spin},\text{even},SF}\otimes \mathbb{Z}[\frac{1}{2}]\ar^{\lambda}[d]& \\ & F^{\text{even}}_*\otimes \mathbb{Z}[\frac{1}{2}]\ar[r]^(.42){\mu}&\Omega^{SO}_{*-2}(BU(1))\otimes \mathbb{Z}[\frac{1}{2}]}
\end{equation*}

and

\begin{equation*}
  \xymatrix{
    0\ar[r]&\Omega_*^{\text{Spin},\text{odd},SF}\otimes \mathbb{Z}[\frac{1}{2}]\ar^(.57){\lambda}[r] & F^{\text{odd}}_*\otimes \mathbb{Z}[\frac{1}{2}]\ar[r]^(.4){\mu}&\Omega^{SO}_{*-2}(BU(1))\otimes \mathbb{Z}[\frac{1}{2}]\ar[r]&0}
\end{equation*}

Here the first map is the natural map from the bordism group of Spin-manifolds with free even action into the bordism group of semi-free actions and
\begin{align*}
  F^{\text{even}}_*&=\bigoplus_{k\geq 0}\Omega_{*-4k}^{SO}(BU(2k))\subset F_*,\\ F^{\text{odd}}_*&=\bigoplus_{k\geq 0}\Omega_{*-4k-2}^{SO}(BU(2k+1))\subset F_*.
\end{align*}
Moreover, the maps \(\lambda\) and \(\mu\) are the same as in the oriented case.

At first we describe another set of generators of \(F_*\otimes \mathbb{Z}[\frac{1}{2}]\) which are more suitable for the discussion of the spin case than the generators described in the previous section.
Denote by \(\tilde{X}_{2k+1}\), \(k>1\), the normal bundle of \(P(\C^2\oplus \bigoplus_{i=1}^{2k-1} \gamma\otimes \gamma)\) in \(P(\C^2\oplus \bigoplus_{i=1}^{2k} \gamma\otimes \gamma)\), where \(\gamma\) is the dual Hopf bundle over \(\C P^1\) and \(P(E)\) denotes the projectivization of the vector bundle \(E\).
Then we have
\begin{equation*}
  \langle c_1(\tilde{X}_{2k+1})^{2k+1},[P(\C^2\oplus \bigoplus_{i=1}^{2k-1} \gamma\otimes \gamma)]\rangle=4.
\end{equation*}
Therefore, by \cite[Theorem 18.1]{MR0176478}, the \(\tilde{X}_{2k+1}\), \(k>0\), together with the \(X_{2k}\), \(k\geq 0\) and \(X_1\) are a basis of  \(\Omega_*^{SO}(BU(1))\otimes \mathbb{Z}[\frac{1}{2}]\) as a \(\Omega_*^{SO}\otimes \mathbb{Z}[\frac{1}{2}]\)-module.
Hence, we have \(F_*\otimes\mathbb{Z}[\frac{1}{2}]=\Omega_*^{SO}[\frac{1}{2}, X_0, X_1, X_{2k}, \tilde{X}_{2k+1}; k\geq 1]\).

For \(k\geq 1\), let \(M_{2k+1}=\C P^{2k+1}(\rho)\) and \(M_{2k+2}=P(\C^2\oplus \bigoplus_{i=1}^{2k} \gamma\otimes \gamma)\), where \(S^1\) acts by multiplication on one of the \(\gamma\otimes \gamma\) summands.
Then we have 
\begin{equation*}
  \lambda(M_{2k+2})=\tilde{X}_{2k+1} - 4 X_1X_0^{2k}.
\end{equation*}
Moreover, let \(M_0\) be \(\mathbb{H} P^2=Sp(3)/(Sp(2)\times Sp(1))\) equipped with the semi-free \(S^1\)-action induced by the embedding
\begin{equation*}
  S^1\hookrightarrow \{I\}\times Sp(1)\hookrightarrow Sp(2)\times Sp(1)\hookrightarrow Sp(3).
\end{equation*}
This action has two fixed point components, namely \(\mathbb{H} P^1\) and an isolated fixed point.
The Chern classes of the normal bundle of \(\mathbb{H} P^1\) in \(\mathbb{H} P^2\) are given by
\begin{align*}
  c_1&=0&c_2&=-u,
\end{align*}
where \(u\) is a generator of \(H^4(\mathbb{H} P^1;\mathbb{Z})\).
Hence, it follows from a calculation of characteristic numbers (cf. Theorem 17.5 of \cite[p. 49]{MR0176478}) that
 \begin{equation*}
   \lambda(M_0)=-X_1^2+2\lambda(M_3)X_0 - \frac{1}{8}[K] X_0^2+ X_0^4,
 \end{equation*}
 where \(K\) is the Kummer surface.
Therefore we have
\begin{equation*}
 F_*\otimes\mathbb{Z}[\frac{1}{2}]=\Omega_*^{SO}[\frac{1}{2},X_0, X_1, \lambda(M_{k}); k\geq 0]/(R-X_1^2),  
\end{equation*}
 where \(R-X_1^2\) is the relation described above.

The following lemma shows that the manifolds \(M_i\) defined above are Spin-manifolds.
The \(S^1\)-action on \(M_i\) is even if and only if \(i=0\).

\begin{lemma}
\label{sec:spin-case-2}
  Let \(N\) be a Spin-manifold with an action of a torus \(T=S^1_1\times\dots\times S^1_k\) such that each factor \(S^1_i\) has a fixed point in \(N\) and \(f:E\rightarrow M\) be a principal \(T\)-bundle.
Then we have
\begin{equation*}
  w_2(E\times_T N)=f^*(w_2(M) + \sum_{i=1}^k \epsilon_i w_2(E_i)),
\end{equation*}
where \(E_i\) is the principal \(S^1_i\)-bundle
\begin{equation*}
  E/(S^1_1\times\dots\times S_{i-1}^1\times \{e\}\times S_{i+1}\times\dots\times S_k^1)\rightarrow M
\end{equation*}
and \(\epsilon_i\) is one or zero if the \(S^1_i\)-action on \(N\) is odd or even, respectively.
\end{lemma}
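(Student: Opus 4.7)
The plan is to split the tangent bundle $T(E\times_T N) = f^*TM \oplus \mathcal V$, where $\mathcal V = E\times_T TN$ is the vertical subbundle. Since $T$ is connected and $N$ is oriented (as a Spin manifold), the structure group of $\mathcal V$ reduces to $SO$, so $w_1(\mathcal V)=0$ and hence $w_1(TM)w_1(\mathcal V)=0$. The Whitney product formula then reduces the claim to the identity
\[ w_2(\mathcal V) \;=\; f^*\Bigl(\sum_{i=1}^k \epsilon_i w_2(E_i)\Bigr). \]

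I would prove this identity by passing to the universal example. Classifying $E\to M$ by a map $g\colon M\to BT$, the construction is pulled back from the Borel fibration $\pi\colon X^{\mathrm u}:=ET\times_T N\to BT$, and $\mathcal V$ is pulled back from $\mathcal V^{\mathrm u}:=ET\times_T TN$. By naturality it suffices to establish
\[ w_2(\mathcal V^{\mathrm u}) \;=\; \pi^*\sum_{i=1}^k \epsilon_i t_i \qquad \text{in } H^2(X^{\mathrm u};\mathbb Z/2), \]
where $t_i\in H^2(BS^1_i;\mathbb Z/2)$ is the mod-$2$ reduction of the universal first Chern class. Since $w_2(TN)=0$, the class $w_2(\mathcal V^{\mathrm u})$ restricts to zero on a fibre; combined with $H^1(BT;\mathbb Z/2)=0$ (which kills the $E_2^{1,1}$-term of the Serre spectral sequence of $\pi$), this places $w_2(\mathcal V^{\mathrm u})$ in $\pi^*H^2(BT;\mathbb Z/2)$, so one may write $w_2(\mathcal V^{\mathrm u})=\pi^*\sum_i c_i t_i$.

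To identify the coefficients $c_i$ I would restrict the universal bundle to $S^1_i\subset T$ and then evaluate on the section
\[ \sigma_i\colon BS^1_i \longrightarrow ES^1_i\times_{S^1_i} N,\qquad [\xi]\longmapsto [\xi,x_i], \]
which is well defined because $x_i$ is fixed by $S^1_i$. Writing $r_i\colon ES^1_i\times_{S^1_i}N\to X^{\mathrm u}$ for the natural map, one has $(r_i\sigma_i)^*\pi^*=\iota_i^*$ with $\iota_i\colon BS^1_i\hookrightarrow BT$, so $(r_i\sigma_i)^*w_2(\mathcal V^{\mathrm u})=c_i t_i$. On the other hand, $r_i^*\mathcal V^{\mathrm u}=ES^1_i\times_{S^1_i}TN$, whose pullback along $\sigma_i$ is $ES^1_i\times_{S^1_i}T_{x_i}N\cong\bigoplus_j L^{m_j}$, the sum of powers of the universal line bundle indexed by the complex weights $m_1,\dots,m_l$ of the $S^1_i$-representation on $T_{x_i}N$. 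Its $w_2$ equals $\bigl(\sum_j m_j\bigr) t_i\bmod 2$, and by the definition of the parity of the $S^1_i$-action at a fixed point this is exactly $\epsilon_i t_i$. Comparing yields $c_i=\epsilon_i$, and pulling back along $g$ (using $g^*t_i=w_2(E_i)$) gives the lemma.

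The hardest part is expected to be bookkeeping rather than any deep input: verifying the pullback identifications $\mathcal V\cong\tilde g^*\mathcal V^{\mathrm u}$, $r_i^*\mathcal V^{\mathrm u}\cong ES^1_i\times_{S^1_i}TN$, and $\sigma_i^*(\cdot)\cong\bigoplus_j L^{m_j}$, and being careful that the section $\sigma_i$ only exists after first restricting the torus to $S^1_i$, since $x_i$ need not be fixed by all of $T$. The remaining ingredients — Whitney, the trivial piece of the Serre spectral sequence coming from $H^1(BT;\mathbb Z/2)=0$, and the formula $w_2(L)=c_1(L)\bmod 2$ for a complex line bundle — are standard.
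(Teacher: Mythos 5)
Your proposal is correct and follows essentially the same route as the paper: split off the vertical bundle, pass to the Borel construction \(ET\times_T N\to BT\), use \(w_2(TN)=0\) together with \(H^1(BT;\mathbb{Z}_2)=0\) to see the class is pulled back from \(BT\), and then identify the coefficients by restricting to \(BS^1_i\) via the section given by an \(S^1_i\)-fixed point and computing the weight parity. The only (harmless) difference is cosmetic: you phrase the middle step via the Serre spectral sequence where the paper quotes the corresponding exact sequence, and you invoke the even/odd--weight-parity equivalence directly where the paper routes it through \(\codim N^{\mathbb{Z}_2}\bmod 4\).
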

\begin{proof}
  The tangent bundle of \(E\times_T N\) is the sum of the pullback of the tangent bundle of \(M\) and the bundle \(TF\) along the fibers.
Let \(\phi:M\rightarrow BT\) be the classifying map of the principal bundle \(E\).
Then there is a pullback diagram.

\begin{equation*}
  \xymatrix{
    TF\ar[r]\ar[d]& TN_T\ar[d]\\
    E\times_T N\ar[r]\ar^{f}[d]& N_T\ar^{\pi}[d]\\
    M\ar^{\phi}[r]&BT}
\end{equation*}

Here \(TN_T\) and \(N_T\) denote the Borel constructions of \(TN\) and \(N\), respectively.
Therefore it is sufficient to show that \(w_2(TN_T)=\pi^*(\sum_{i=1}^k\epsilon_i x_i)\), where \(x_i\) is the generator of \(H^2(BS_i^1;\mathbb{Z}_2)\subset H^2(BT;\mathbb{Z}_2)\).

There is an exact sequence
\begin{equation*}
\xymatrix{
  0\ar[r]& H^2(BT;\mathbb{Z}_2)\ar^{\pi^*}[r]&H^2(N_T;\mathbb{Z}_2)\ar[r]& H^2(N;\mathbb{Z}_2)}
\end{equation*}
Hence, we have \(w_2(TN_T)\in \image \pi^*\).
Let \(\iota:BS_i^1\rightarrow BT\) be the inclusion. 
Then we have \(\iota^*N_T=N_{S^1_i}\).
Since there are \(S^1_i\)-fixed points in \(N\) there is a section to \(N_{S^1_i}\rightarrow BS_i^1\) induced by the inclusion of a fixed point \(pt\).
Therefore we have \(w_2(TN_{S^1_i})=\pi^*\sum a_jx_i\), where the \(a_j\) are the weights of the \(S^1_i\)-representation \(T_{pt}N\).
Now the \(S^1_i\)-action on \(N\) is even if and only if \(\codim N^{\mathbb{Z}_2}\equiv 0 \mod 4\). That is if and only if there is an even number of odd \(a_j\).
Therefore the statement follows.
\end{proof}

It follows from this lemma, that, for a complex line bundle \(\gamma\) over an oriented manifold \(N\) and a Spin-manifold \(M\) with semi-free \(S^1\)-action, \(\Delta(\gamma,M)\) is spin if and only if \(w_2(N)\equiv c_1(\gamma)\mod 2\).
Moreover, if \(M'\) is a Spin-manifold with two commuting semi-free \(S^1\)-actions, then \(\Gamma(\gamma,M')\) is spin if and only if  \(w_2(N)\equiv c_1(\gamma)\mod 2\) and both \(S^1\)-actions are even or both \(S^1\)-actions are odd.

\begin{proof}[Proof of Theorem~\ref{sec:spin-case-1}]
  We will show that for every \([M]\in \Omega_n^{\text{Spin},\text{even},SF}\), \(n\geq 6\), there is a \([M']\in \Omega_n^{\text{Spin},\text{even},SF}\) such that \(M'\) admits an invariant metric of positive scalar curvature and \(\lambda(2^k[M])=\lambda([M'])\) in \(F_*^{\text{even}}\otimes \mathbb{Z}[\frac{1}{2}]\).
Hence, \(2^{k+k'}[M]-2^{k'}[M']\) is bordant to a free  \(S^1\)-manifold \(N\).
After doing some surgeries we may assume that \(N/S^1\) is simply connected. 
 Because \(\hat{A}(M/S^1)\) is an invariant of the equivariant bordism type of \(M\) and agrees with the usual \(\hat{A}\)-genus if the \(S^1\)-action is free, it follows from Theorems~\ref{sec:semi-free-circle-2} and \ref{sec:pre-1} that \(2^{k+k'+1}M\) admits an invariant metric of positive scalar curvature if and only if \(2^{k+k'+1}\hat{A}(M/S^1)=2\hat{A}(N/S^1)=0\).
Hence, Theorem~\ref{sec:spin-case-1} follows.

Now we turn to the construction of \(M'\).
At first assume that \(n\) is odd. Then \(F^{\text{even}}_n\otimes \mathbb{Z}[\frac{1}{2}]\) vanishes as in the non-spin case.
Therefore we may assume that \([M']=0\).

Next assume that \(n\equiv 0\mod 4\).
Then there are \(\alpha_J,\beta_J\in \Omega^{\text{Spin}}_*\otimes \mathbb{Z}[\frac{1}{2}]\) and an \(L\in F_*^{\text{odd}}\otimes \mathbb{Z}[\frac{1}{2}]\) such that
\begin{equation*}
  \lambda(M)=\sum_J \alpha_J \lambda(\prod_{i\in J}M_i) + X_1 \sum_{J'} \beta_{J'}\lambda(\prod_{i\in J'}M_i) + X_0 L.
\end{equation*}
Here the sums are taken over all finite sequences \(J\) and \(J'\) with at least two elements or one element, respectively.
Since all \(M_i\) admit invariant metrics of positive scalar curvature we may assume that \(\alpha_J=0\) for all \(J\).

The \(S^1\)-action on \(\sum_{J'} \beta_{J'}\prod_{i\in J'}M_i\) is of odd type.
Therefore each product \(\prod_{i\in J'}M_i\) contains a factor \(M_{i_0}\) with \(i_0>0\).

Let \(E\) be the principal \(S^1\)-bundle associated to the dual of the tautological bundle \(\gamma\)  over \(\C P^2(\rho)\).
The \(S^1\)-action on \(\C P^2(\rho)\) lifts into \(E\) such that the action on the fiber over the isolated fixed point is trivial.
Since the \(S^1\)-action on \(M_{i_0}\) is odd, \(E\times_{S^1}M_{i_0}\) is a Spin-manifold by Lemma~\ref{sec:spin-case-2}.
Moreover, it follows from a calculation of characteristic numbers that
\begin{equation*}
  \lambda(E\times_{S^1}M_{i_0})= X_1 \lambda(M_{i_0}) + X_0 L'
\end{equation*}
with some \(L'\in F^{\text{odd}}_*\otimes\mathbb{Z}[\frac{1}{2}]\).

Indeed, there are three fixed point components in \(E\times_{S^1}M_{i_0}\), namely the fiber over the isolated fixed point in \(\C P^2(\rho)\), and two components which are bundles over the two-dimensional fixed point component of \(\C P^2(\rho)\) with fibers the fixed point components of \(M_{i_0}\).
The normal bundle of the first fixed point component is trivial.

The other fixed point components are diffeomorphic to cartesian products \(\C P^1\times F_i\), where \(F_i\), \(i=1,2\), are the fixed point components in \(M_{i_0}\).
Hence, their cohomology with coefficients in \(\Q\) or \(\mathbb{Z}_2\) is isomorphic to \(H^*(\C P^1)\otimes H^*(F_i)\). Moreover, there are \(k_i\in \mathbb{Z}\), \(i=1,2\), such that the first two Chern classes of their normal bundles are given by
\begin{align*}
  c_1&=k_ic_1(\gamma)+c_1(N(F_i,M_{i_0}))& c_2&=c_1(\gamma)c_1(N(F_i,M_{i_0})).
\end{align*}
The other Chern classes vanish because these fixed point components have codimension or dimension four, respectively.

Therefore, all characteristic numbers of \( \lambda(E\times_{S^1}M_{i_0})- X_1 \lambda(M_{i_0})\) involving Chern classes \(c_i\), \(i>1\), of the normal bundle of the fixed point components vanish (cf. Theorem 17.5 of \cite[p. 49]{MR0176478}).
Since these normal bundles have complex dimension greater than one, it follows that \( \lambda(E\times_{S^1}M_{i_0})- X_1 \lambda(M_{i_0})\) is contained in the ideal of \(F_*\otimes \mathbb{Z}[\frac{1}{2}]\) generated by \(X_0\).

By the same argument as in Construction~\ref{sec:non-spin-case-1}, \(E\times_{S^1} M_{i_0}\) admits an invariant metric of positive scalar curvature.
Therefore we may assume that all \(\beta_{J'}\) are zero.

Hence, by the same argument as in the proof of Lemma~\ref{sec:semi-free-circle-4}, there is a Spin-manifold \(\tilde{L}\)  with semi-free \(S^1\)-action of odd type such that \(\lambda(\tilde{L})=2^kL\).
Then \(\Delta(X_0,\tilde{L})\) is a Spin-manifold such that
\begin{equation*}
  \lambda(\Delta(X_0,\tilde{L}))=2^k(X_0L-X_0\iota(L))=2^{k+1} X_0 L.
\end{equation*}
Hence, we may assume that \(L=0\).
Therefore the claim follows in this case.

Next assume that \(n\equiv 2 \mod 4\).
Then there are \(\alpha_{k,l,J} \in \Omega^{\text{Spin}}_*\otimes \mathbb{Z}[\frac{1}{2}]\) such that
\begin{equation*}
  \lambda(M)=\sum_{k=0}^{n/2} \sum_{l=0}^1 \sum_J \alpha_{k,l,J} \lambda(\prod_{i\in J}M_i)X_0^kX_1^l.
\end{equation*}
We will show that we may assume that all \(\alpha_{k,l,J}\) vanish after adding Spin-manifolds with even semi-free \(S^1\)-actions which admit invariant metrics of positive scalar curvature.
In the case that \(k=l=0\) there is nothing to show.

Next assume that \(k=0\) and \(l=1\).
Then the dimension of \(\prod_{i\in J} M_i\) with \(\alpha_{k,l,J}\neq 0\) is congruent to \(2\mod 4\).
Moreover, \(\Delta(X_1\otimes X_1,\prod_{i\in J} M_i)\) is a Spin-manifold with semi-free \(S^1\)-action such that
\begin{equation*}
  \lambda(\Delta(X_1\otimes X_1,\prod_{i\in J} M_i))= 4 X_1 \lambda(\prod_{i\in J} M_i).
\end{equation*}
Therefore we may assume that \(\alpha_{0,1,J}\) vanishes for all \(J\).

Next assume that \(k>0\) is odd and \(l=0\).
Then the dimension of \(\prod_{i\in J} M_i\) with \(\alpha_{k,l,J}\neq 0\) is divisible by four and the action on this product is of odd type.
Therefore such a product contains at least one factor \(M_{i_0}\) with \(i_0>0\).

At first assume that \(i_0\) is odd.
Then for the Spin-manifold
\begin{equation*}
  N=2\C P^{k}(\rho)\times M_{i_0}-\Delta(X_{k-1},M_{i_0}).
\end{equation*}
we have \(\lambda(N)=-2X_0^k\lambda(M_{i_0})\).
Therefore we may assume that all \(\alpha_{k,l,J}\) with \(k>0\) odd and \(l=0\) vanish if \(J\) contains an odd number.

Now we turn to the case where \(i_0\) is even.
Then there is a second semi-free \(S^1\)-action on \(M_{i_0}\) induced from a lift of the \(S^1\)-action on \(\C P^1(\rho)\) to \(\gamma\otimes \gamma\).
The two \(S^1\)-actions commute and are both of odd type.
Let \(M_{i_0}'\) be \(M_{i_0}\) equipped with the second \(S^1\)-action.

Let
\begin{equation*}
  N= \C P^k(\rho)\times(M_{i_0}-M_{i_0}')- \Gamma(X_{k-1},M_{i_0}).
\end{equation*}
Then we have, by Lemma~\ref{sec:semi-free-circle-3},
\begin{equation*}
  \lambda(N)=-X_0^k\lambda(M_{i_0})+ X_0^{k+1}\lambda(L),
\end{equation*}
where \(L\) is \(\C P^{i_0-1}\) equipped with some semi-free \(S^1\)-action.
Hence, we may assume that all \(\alpha_{k,l,J}\) with \(k>0\) odd and \(l=0\) vanish.

Next assume that \(k>0\)  is even and \(l=1\).
 Then the dimension of \(\prod_{i\in J} M_i\) with \(\alpha_{k,l,J}\neq 0\) is congruent to \(2\mod 4\) and the action on this product is of odd type.

Hence,
\begin{equation*}
  N=2M_{k+2}\times \prod_{i\in J} M_i - \Delta(\tilde{X}_{2k+1},\prod_{i\in J} M_i)
\end{equation*}
is spin and
\begin{equation*}
  \lambda(N)=-8 X_1X_0^k\lambda(\prod_{i\in J}M_i).
\end{equation*}
Therefore we may assume that all \(\alpha_{k,l,J}\) with \(k>0\) even and \(l=1\) vanish.

Next assume that \(k>0\) is even and \(l=0\).
 Then the dimension of \(\prod_{i\in J} M_i\) with \(\alpha_{k,l,J}\neq 0\) is congruent to \(2\mod 4\) and the action on this product is of even type.
Therefore in this product there appears at least one factor \(M_{i_0}\) with \(i_0\) odd and a second factor \(M_{i_1}\) with \(i_1>0\).
Let \(E\) be the principal \(S^1\)-bundle associated to the tautological line bundle over \(\C P^k(\rho)\).
Then the action on \(\C P^k(\rho)\) lifts into \(E\) in such a way that it is trivial on the fibers of the fixed point component of codimension two in \(\C P^k(\rho)\) and multiplication on the fiber over the isolated fixed point.
Moreover, \(N= E\times_{S^1}M_{i_0}\) is a Spin-manifold with semi-free \(S^1\)-manifold such that \(\lambda(N)=X_0^k\lambda(M_{i_0})\) because \(\dim M_{i_0}\equiv 2 \mod 4\).
As in Construction~\ref{sec:non-spin-case-1}, one sees that \(N=E\times_{S^1}M_{i_0}\) admits an invariant metric of positive scalar curvature.
Hence, after adding multiples of \(\prod_{i\in J-\{i_0\}}M_i \times N\) to \(M\),
 we may assume that all \(\alpha_{k,l,J}\) with \(k>0\) even and \(l=0\) vanish.

Next let \(k>0\) be odd and \(l=1\).
 Then the dimension of \(\prod_{i\in J} M_i\) with \(\alpha_{k,l,J}\neq 0\) is divisible by four and the action on this product is of even type.

We will construct below two semi-free Spin \(S^1\)-manifolds \(N_1\) and \(N_2\)  with even action  from a semi-free Spin \(S^1\)-manifold \(M\) of dimension divisible by four with even action  such that
\(\lambda(N_1)=4 X_1X_0 (\lambda(M)-\bar{M})\) and \(\lambda(N_2)= 2 X_0^2 (\lambda(M)-\bar{M})\), where \(\bar{M}\) denotes the manifold \(M\) with trivial \(S^1\)-action.
\(N_1\) and \(N_2\) admit invariant metrics of positive scalar curvature.
Therefore after adding multiples of a manifold which is constructed by iterating these constructions we may assume that all \(\alpha_{k,l,J}\) with \(k>0\), \(l=1\) and non-empty \(J\) vanish.

For the construction of \(N_1\) consider the projectivization \(P\) of \(\C\oplus \gamma\otimes \gamma\), where \(\gamma\) is the dual of the Hopf bundle over \(\C P^1\).
\(S^1\) acts on \(P\) by multiplication on \(\gamma\otimes \gamma\).
Then \(P^{S^1}\) has two components and
\begin{equation*}
  \lambda(P)=2X_1-2X_1.
\end{equation*}

Let \(\gamma'\) be the dual of the tautological bundle over \(P\).
Then the \(S^1\)-action lifts into \(\gamma'\otimes \gamma'\) in such a way that the action over the two fixed point components in \(P\) are given by multiplication and multiplication with the inverse, respectively.
This action induces a semi-free \(S^1\)-action on \(P(\C\oplus \gamma'\otimes\gamma')\).
The fixed point set of the \(S^1\)-action on \(P(\C\oplus \gamma'\otimes\gamma')\) has four components and 
\begin{align*}
  \lambda(P(\C\oplus \gamma'\otimes \gamma'))&= 2X_1(X_0 -X_0)-2X_1(-X_0+X_0)\\ &= (2X_1X_0+2X_1X_0) - (2X_1X_0+ 2X_1X_0)
\end{align*}
Let \(E\) be the principal \(S^1\)-bundle associated to the dual of the tautological line bundle \(\gamma''\) over \(P(\C\oplus \gamma'\otimes \gamma')\).
The \(S^1\)-action on \(P(\C\oplus \gamma'\otimes \gamma')\) lifts into \(E\) in such a way that it is trivial over the two fixed point components corresponding to  \(-(2X_1X_0+ 2X_1X_0)\) and multiplication and multiplication with the inverse over the other two.
Therefore it follows that for \(N_1=E\times_{S^1}M\), \(\lambda(N_1)= 4X_1X_0 (M-\bar{M})\).
As in Construction~\ref{sec:non-spin-case-1} one sees that \(N_1\) admits an invariant metric of positive scalar curvature.
Because the action on \(M\) is even, \(N_1\) is spin.
The construction of \(N_2\) is similar with \(P\) replaced by \(\C P^1(\rho)\).
We omit the details.

By the above constructions we may now assume that
\begin{equation*}
  \lambda(M)=\sum_{k=1}^{(n-1)/2}X_1X_0^k\beta_k
\end{equation*}
with \(\beta_k\in \Omega_*^{\text{Spin}}\otimes \mathbb{Z}[\frac{1}{2}]\).
But the \(\mu(X_1X_0^k)\) are part of a basis of \(\Omega_*^{SO}(BU(1))\otimes\mathbb{Z}[\frac{1}{2}]\).
Therefore all the \(\beta_k\) must vanish and Theorem~\ref{sec:spin-case-1} is proved.
\end{proof}

\bibliography{circle_psc}{}
\bibliographystyle{amsplain}
\end{document}